\renewcommand{\(}{\left( }
\renewcommand{\)}{\right) }
\renewcommand{\theequation}{\theequation. \arabic{equation}}
\numberwithin{equation}{section}
\newtheorem{thm}{Theorem}[section]
\newtheorem{lem}[thm]{Lemma}
\newtheorem{rem}[thm]{Remark}
\newtheorem{prop}[thm]{Proposition}
\newtheorem{defn}[thm]{Definition}
\def\squarebox#1{\hbox to #1{\hfill\vbox to #1{\vfill}}}
\begin{document}
\title{On the $q$-partial differential equations and $q$-series}
\author{Zhi-Guo Liu}
\footnote{Dedicated to Srinivasa Ramanujan on the occasion of his 125th birth anniversary}
\date{\today}
\address{East China Normal University, Department of Mathematics, 500 Dongchuan Road,
Shanghai 200241, P. R. China} \email{zgliu@math.ecnu.edu.cn;
liuzg@hotmail.com}
\thanks{ 2010 Mathematics Subject Classifications :  05A30,
33D15, 11E25, 11F27.}
\thanks{ Keywords: $q$-series, $q$-derivative, $q$-partial differential equation, $q$-exponential operator,  $q$-identities, analytic functions, Rogers-Szeg\H{o} polynomials}
\begin{abstract}
Using the theory of functions of several complex variables, we prove that if an analytic
function in several variables satisfies a system of $q$-partial differential equations,
then,  it can be expanded in terms of the product of the Rogers-Szeg\H{o} polynomials.
This expansion theorem allows us to develop a general method for proving $q$-identities.
A general $q$-transformation formula is derived, which implies Watson's $q$-analog of Whipple's theorem
as a special case. A multilinear generating function for the Rogers-Szeg\H{o} polynomials is given.
The theory of $q$-exponential operator is revisited.
\end{abstract}
\maketitle
\tableofcontents
\section{Introduction}
Throughout the paper, we use the standard $q$-notations.
For $0<q<1$, we define the $q$-shifted factorials as
\begin{equation*}
(a; q)_0=1,\quad (a; q)_n=\prod_{k=0}^{n-1}(1-aq^k), \quad (a;
q)_\infty=\prod_{k=0}^\infty (1-aq^k);
\end{equation*}
and for convenience, we also adopt the following compact notation for the multiple
$q$-shifted factorial:
\begin{equation*}
(a_1, a_2,...,a_m;q)_n=(a_1;q)_n(a_2;q)_n ... (a_m;q)_n,
\end{equation*}
where $n$ is an integer or $\infty$.

The basic hypergeometric series
${_r\phi_s}$ is defined as
\begin{equation*}
{_r\phi_s} \left({{a_1, a_2, ..., a_{r}} \atop {b_1, b_2, ...,
b_s}} ;  q, z  \right) =\sum_{n=0}^\infty \frac{(a_1, a_2, ...,
a_{r};q)_n} {(q,  b_1, b_2, ..., b_s ;q)_n}\left((-1)^n q^{n(n-1)/2}\right)^{1+s-r} z^n.
\end{equation*}

For any function $f(x)$ of one variable, the  $q$-derivative of $f(x)$
with respect to $x,$ is defined as
\begin{equation*}
\mathcal{D}_{q,x}\{f(x)\}=\frac{f(x)-f(qx)}{x},
\end{equation*}
and we further define  $\mathcal{D}_{q,x}^{0} \{f\}=f,$ and for $n\ge 1$, $\mathcal{D}_{q, x}^n \{f\}=\mathcal{D}_{q, x}\{\mathcal{D}_{q, x}^{n-1}\{f\}\}.$

For any nonnegative integer $n,$ we have the higher-order $q$-derivative formula
\begin{equation}
\mathcal{D}^n_{q, x} \left\{\frac{1}{(sx; q)_\infty}\right\}=\frac{s^n}{(sx; q)_\infty},
\label{qd:eqn1}
\end{equation}
which is the case $t=0$ of the following  general higher-order $q$-derivative formula:
\begin{equation}
\mathcal{D}^n_{q, x} \left\{\frac{(tx; q)_\infty}{(sx; q)_\infty}\right\}
=s^n(t/s; q)_n \frac{(q^ntx; q)_\infty}{(sx; q)_\infty}.
\label{qd:eqn2}
\end{equation}
\begin{defn}\label{qpde}
A $q$-partial derivative of a function of several variables is its $q$-derivative with respect to one of those variables, regarding other variables as constants. The $q$-partial derivative of a function $f$ with respect to the variable $x$ is denoted by $\partial_{q, x}\{f\}$.
\end{defn}
The Gaussian binomial coefficients also called the $q$-binomial coefficients are $q$-analogs of the binomial coefficients, which
are given by
\begin{equation}
{n\brack k}_q=\frac{(q; q)_n}{(q; q)_k(q; q)_{n-k}}.
\label{qd:eqn3}
\end{equation}
Now we introduce the definition of the Rogers--Szeg\H{o} polynomials which were first studied by Rogers \cite{Rogers1893} and then by Szeg\H{o} \cite{Szeg}.
\begin{defn} \label{rspolydefn}With the $q$-binomial coefficients be defined as in (\ref{qd:eqn3}),
the Rogers-Szeg\H{o} polynomials are defined by
\[
h_n(x, y|q)=\sum_{k=0}^n {n\brack k}_q x^k y^{n-k}.
\]
\end{defn}

If $q$ is replaced by $q^{-1}$ in the Rogers-Szeg\H{o} polynomials, we can obtain the Stieltjes-Wigert polynomials
(see, for example,  \cite{Carlitz, Szeg}).
\begin{defn}\label{swpolydefn}  The Stieltjes-Wigert polynomials are defined by
\[
g_n(x, y|q)=h_n(x, y|q^{-1})=\sum_{k=0}^n {n\brack k}_q  q^{k(k-n)}x^k y^{n-k}.
\]
\end{defn}
\begin{prop}\label{gefunpp} $h_n(x, y|q)$ and $g_n(x, y|q)$ satisfy the identities
\begin{equation}
\partial_{q, x} \{h_n(x, y|q)\}=\partial_{q, y} \{h_n(x, y|q)\}=(1-q^n)h_{n-1}(x, y|q),
\label{rseqn3}
\end{equation}
\begin{equation}
\partial_{q^{-1}, x} \{g_n(x, y|q)\}=\partial_{q^{-1}, y} \{g_n(x, y|q)\}=(1-q^{-n})g_{n-1}(x, y|q).
\label{rseqn4}
\end{equation}
\end{prop}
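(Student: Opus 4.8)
The plan is to establish \reff{rseqn3} by differentiating $h_n(x,y|q)$ monomial by monomial, and then to obtain \reff{rseqn4} for free by the substitution $q\mapsto q^{-1}$. The one computational input I need at the outset is that for the single-variable monomial $x^k$ one has $\mathcal{D}_{q,x}\{x^k\}=(x^k-(qx)^k)/x=(1-q^k)x^{k-1}$, and hence, treating $y$ as a constant, $\partial_{q,x}\{x^k y^{n-k}\}=(1-q^k)x^{k-1}y^{n-k}$.

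Applying this termwise to $h_n(x,y|q)=\sum_{k=0}^{n}{n\brack k}_q x^k y^{n-k}$, the $k=0$ contribution vanishes since $1-q^0=0$, so that $\partial_{q,x}\{h_n(x,y|q)\}=\sum_{k=1}^{n}{n\brack k}_q(1-q^k)x^{k-1}y^{n-k}$. Shifting the index by $k\mapsto k+1$ recasts this as $\sum_{k=0}^{n-1}(1-q^{k+1}){n\brack k+1}_q x^{k}y^{n-1-k}$, and the crux of the argument is the $q$-binomial recurrence $(1-q^{k+1}){n\brack k+1}_q=(1-q^n){n-1\brack k}_q$. This is verified directly from the definition \reff{qd:eqn3} by writing $(q;q)_{k+1}=(1-q^{k+1})(q;q)_k$ and $(q;q)_n=(1-q^n)(q;q)_{n-1}$ and cancelling; both sides then reduce to $(q;q)_n/((q;q)_k(q;q)_{n-1-k})$. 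Substituting the recurrence gives $\partial_{q,x}\{h_n(x,y|q)\}=(1-q^n)\sum_{k=0}^{n-1}{n-1\brack k}_q x^k y^{n-1-k}=(1-q^n)h_{n-1}(x,y|q)$, which is the $x$-derivative half of \reff{rseqn3}.

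For the $y$-derivative I would invoke the symmetry $h_n(x,y|q)=h_n(y,x|q)$, which is immediate from ${n\brack k}_q={n\brack n-k}_q$; this reduces $\partial_{q,y}$ to the case already handled, so that $\partial_{q,y}\{h_n(x,y|q)\}=(1-q^n)h_{n-1}(x,y|q)$ as well. Finally, \reff{rseqn4} needs no fresh computation: since $g_n(x,y|q)=h_n(x,y|q^{-1})$ by definition, and $\partial_{q^{-1},x}$ is exactly $\partial_{q,x}$ with $q$ replaced by $q^{-1}$, replacing $q$ by $q^{-1}$ throughout \reff{rseqn3} yields $(1-q^{-n})g_{n-1}(x,y|q)$. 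I do not anticipate a genuine obstacle here: the only point demanding care is the single $q$-binomial recurrence above together with the accompanying index shift, which carries essentially all the content of the proposition, everything else being formal.
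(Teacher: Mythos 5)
Your proof is correct, and at its core it is the same kind of computation as the paper's (termwise $q$-differentiation of the monomials followed by an elementary $q$-binomial coefficient identity), but the two arguments are organized genuinely differently. The paper computes both $\partial_{q,x}\{h_n\}$ and $\partial_{q,y}\{h_n\}$ termwise, shifts the index in the $y$-sum, and equates the two sums via the cross identity ${n\brack k}_q(1-q^k)={n\brack k-1}_q(1-q^{n-k+1})$; as written, that establishes $\partial_{q,x}\{h_n\}=\partial_{q,y}\{h_n\}$ but never explicitly identifies the common value as $(1-q^n)h_{n-1}(x,y|q)$, and it disposes of \reff{rseqn4} with only the remark that it is proved ``in the same way.'' You instead evaluate $\partial_{q,x}\{h_n\}$ directly as $(1-q^n)h_{n-1}(x,y|q)$ using the Pascal-type recurrence $(1-q^{k+1}){n\brack k+1}_q=(1-q^n){n-1\brack k}_q$, recover the $y$-derivative for free from the symmetry $h_n(x,y|q)=h_n(y,x|q)$ (equivalently ${n\brack k}_q={n\brack n-k}_q$), and deduce \reff{rseqn4} by the substitution $q\mapsto q^{-1}$, which is legitimate since \reff{rseqn3} is an identity of Laurent polynomials in $q$ and $g_n(x,y|q)=h_n(x,y|q^{-1})$ by definition. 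The net effect is that your route is slightly more economical (no duplicated termwise computation, no separate argument for the Stieltjes--Wigert case) and is actually more complete on the one point the paper glosses over, namely the explicit evaluation of the derivatives as $(1-q^n)h_{n-1}$, which is the form of the identity actually invoked later in the paper.
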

\begin{proof}
Using the identity, $\partial_{q, x} \{ x^k\}=(1-q^k) x^{k-1},$  we immediately find that
\[
\partial_{q, x}\left\{h_n(x, y|q)\right\}
=\sum_{k=1}^n {n\brack k}_q  (1-q^k) x^{k-1} y^{n-k}.
\]
In the same way, using the identity, $\partial_{q, y} \{ y^{n-k}\}=(1-q^{n-k}) y^{n-k-1},$ we deduce that
\[
\partial_{q, y}\{h_n(x, y|q)\}
=\sum_{k=0}^{n-1} {n\brack k}_q (1-q^{n-k}) x^{k} y^{n-k-1}.
\]
If we make the variable change $k+1 \to k$ in the right-hand side of the above equation, we can find that
\[
\partial_{q, y}\left\{h_n(x, y|q)\right\}
=\sum_{k=1}^{n} {n\brack {k-1}}_q (\alpha; q)_{k} (1-q^{n-k+1}) x^{k-1} y^{n-k}.
\]
From the definition of the $q$-binomial coefficients, it is easy to verify that
\[
{n\brack k}_q (1-q^k)={n\brack {k-1}}_q  (1-q^{n-k+1}).
\]
Thus, the identity in (\ref{rseqn3}) holds. In this same way, we can prove (\ref{rseqn4}).
This completes the proof of Proposition~\ref{gefunpp}.
\end{proof}
To explain our motivation of this paper, we begin with the following proposition.
\begin{prop} \label{ppmotivation}
If $f(x, y)$ is a two variables analytic function in a neighbourhood
of $(0, 0) \in \mathbb{C}^2$, satisfying the partial differential equation
$f_x(x,y)=f_y(x, y),$ then,  we have $f(x, y)=f(x+y, 0).$
\end{prop}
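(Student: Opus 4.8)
The plan is to exploit analyticity by passing to the double Taylor expansion of $f$ about the origin and converting the partial differential equation into a recurrence among the Taylor coefficients. Since $f$ is analytic in a neighbourhood of $(0,0)$, I may write
\[
f(x,y)=\sum_{m,n\ge 0} a_{m,n}\, x^m y^n,
\]
this series converging absolutely on some polydisc $|x|,|y|<\rho$. Term-by-term $q$-independent (ordinary) differentiation is then legitimate there, and comparing the coefficient of $x^m y^n$ on the two sides of $f_x=f_y$ gives the relation
\[
(m+1)\,a_{m+1,n}=(n+1)\,a_{m,n+1}\qquad(m,n\ge 0).
\]

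The next step is to solve this recurrence by expressing every coefficient in terms of those lying on the $x$-axis, namely the $a_{k,0}$. Rewriting it as $a_{m,n+1}=\frac{m+1}{n+1}\,a_{m+1,n}$ and inducting on $n$, I expect the closed form
\[
a_{m,n}=\binom{m+n}{n}a_{m+n,\,0}.
\]
The induction step is a short manipulation of factorials, so this part is routine once the pattern is guessed; identifying the correct closed form is the only place that takes a moment of thought.

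Finally I would feed this back into the two series. On the one hand $f(x+y,0)=\sum_{k\ge 0}a_{k,0}(x+y)^k$, and expanding $(x+y)^k$ by the binomial theorem and collecting the coefficient of $x^m y^n$ (so $k=m+n$) returns exactly $\binom{m+n}{n}a_{m+n,0}=a_{m,n}$. Hence the Taylor expansions of $f(x,y)$ and of $f(x+y,0)$ coincide term by term, so the two analytic functions agree on a common neighbourhood of the origin, which is the claim. The conceptual reason behind the computation is that $f_x=f_y$ forces $f$ to be constant along the lines $x+y=\text{const}$; equivalently, the substitution $g(u,v)=f(u-v,v)$ satisfies $g_v=f_y-f_x=0$, so $g$ is independent of $v$ and $f(x,y)=g(x+y,y)=f(x+y,0)$. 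The only genuine obstacle is bookkeeping: one must keep the rearrangements inside the polydisc of convergence so the coefficient comparisons remain valid, and check that $f(x+y,0)$ is itself analytic near $(0,0)$, which holds because $|x+y|$ is small there.
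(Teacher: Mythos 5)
Your proof is correct and takes essentially the same approach as the paper: expand $f$ in a power series at the origin, turn $f_x=f_y$ into a recurrence that expresses every coefficient in terms of those on the line $y=0$ (the paper writes $f=\sum_n A_n(x)y^n$ and derives $A_n=A_0^{(n)}/n!$, which is exactly your $a_{m,n}=\binom{m+n}{n}a_{m+n,0}$ read off coefficientwise), and then recognize the resulting series as $f(x+y,0)$. The only cosmetic difference is that the paper finishes by invoking the one-variable Taylor theorem, where you instead expand $(x+y)^k$ by the binomial theorem and match coefficients.
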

\begin{proof}
Now we begin to solve the partial differential equation in the above proposition.
From the theory of two complex variables, we may assume that near $(x, y)=(0, 0),$
\[
f(x, y)=\sum_{n=0}^\infty A_n(x)y^n.
\]
If this is substituted into $ f_x(x,y)=f_y(x, y)$, we immediately conclude that
\[
\sum_{n=0}^\infty A_n'(x)y^n=\sum_{n=0}^\infty (n+1)A_{n+1}(x)y^n.
\]
Equating the  coefficients of $y^n$ on both sides of the above equation,
we find that for each integer $n\ge 1, A_n(x)={A_{n-1}'(x)}/{n}.$
By iteration, we deduce that $A_n(x)={A_0^{(n)}(x)}/{n!}.$  It is obvious that
$A_0(x)=f(x, 0).$ Using the Taylor expansion, we deduce that
\[
f(x, y)=\sum_{n=0}^\infty \frac{f^{(n)}(x, 0)}{n!}y^n=f(x+y, 0),
\]
which completes the proof of Proposition~\ref{ppmotivation}.
\end{proof}
In order to find the $q$-extension of Proposition~\ref{ppmotivation}, we are led to the following proposition.
\begin{prop}\label{qppmotivation} If $f(x,y)$  is a two-variable
 analytic function at $(0,0)\in \mathbb{C}^2$, then, we have
 \begin{itemize}
 \item[(i)]
 $f$ can be expanded in terms of $h_n(x, y|q)$ if and only if $f$
 satisfies the $q$-partial differential equation
 $
 \partial_{q, x}\{f\}=\partial_{q, y}\{f\}.
 $
\item[(ii)]
 $f$ can be expanded in terms of $g_n(x, y|q)$  if and only if  $f$
 satisfies the $q$-partial differential equation
$
\partial_{q^{-1}, x}\{f\}=\partial_{q^{-1}, y}\{f\}.
$
\end{itemize}
\end{prop}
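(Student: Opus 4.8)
The plan is to mirror the argument of Proposition~\ref{ppmotivation}, replacing ordinary derivatives by $q$-derivatives and the monomials $y^n$ by the Rogers--Szeg\H{o} polynomials. The "only if" direction of each part is immediate and I would dispose of it first: if $f(x,y)=\sum_{n\ge 0}c_n h_n(x,y|q)$, then differentiating term by term and invoking \reff{rseqn3} of Proposition~\ref{gefunpp} gives $\partial_{q,x}\{f\}=\sum_{n\ge 0}c_n(1-q^n)h_{n-1}(x,y|q)=\partial_{q,y}\{f\}$; the same computation with \reff{rseqn4} settles the $g_n$ case. So the substance lies entirely in the converse.

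For the converse in part (i), I would use the analyticity of $f$ at the origin to write $f(x,y)=\sum_{n\ge 0}A_n(x)y^n$ with each $A_n$ analytic near $x=0$. Since $\partial_{q,y}$ sends $y^n$ to $(1-q^n)y^{n-1}$, we get $\partial_{q,y}\{f\}=\sum_{n\ge 0}(1-q^{n+1})A_{n+1}(x)y^n$, while $\partial_{q,x}\{f\}=\sum_{n\ge 0}\mathcal{D}_{q,x}\{A_n(x)\}y^n$. Equating coefficients of $y^n$ in the equation $\partial_{q,x}\{f\}=\partial_{q,y}\{f\}$ yields the recurrence $A_{n+1}(x)=\mathcal{D}_{q,x}\{A_n(x)\}/(1-q^{n+1})$, and iterating gives $A_n(x)=\mathcal{D}_{q,x}^n\{A_0(x)\}/(q;q)_n$ with $A_0(x)=f(x,0)$. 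Hence $f$ is completely determined by its restriction to $y=0$:
\[
f(x,y)=\sum_{n\ge 0}\frac{y^n}{(q;q)_n}\,\mathcal{D}_{q,x}^n\{f(x,0)\}.
\]

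To recognize this as an expansion in the $h_n$, I would expand the one-variable analytic function $f(x,0)=\sum_{k\ge 0}b_k x^k$ and use $\mathcal{D}_{q,x}^n\{x^k\}=\frac{(q;q)_k}{(q;q)_{k-n}}x^{k-n}$ for $k\ge n$ (and $0$ otherwise). Substituting and interchanging the two summations collects, for each fixed $k$, the terms $\sum_{n=0}^k {k\brack n}_q x^{k-n}y^n=h_k(x,y|q)$, so that $f(x,y)=\sum_{k\ge 0}b_k\,h_k(x,y|q)$ --- an expansion whose coefficients are precisely the Taylor coefficients of $f(x,0)$. Part (ii) is identical after replacing $q$ by $q^{-1}$ throughout and using \reff{rseqn4} with the Stieltjes--Wigert polynomials $g_n$.

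The step I expect to require the most care is not the formal algebra but the analytic justification, which is where the theory of several complex variables enters. One must verify that the double Taylor series of $f$ converges absolutely in a polydisc about $(0,0)$, since this is what both licenses the term-by-term $q$-differentiation used to derive the recurrence and legitimizes the rearrangement of summations in the final resummation. I would therefore isolate the convergence estimates --- bounding the coefficients $b_k$ and controlling the partial sums on a suitable polydisc --- as the genuine technical core, treating the recurrence and the $q$-binomial collapse as routine once absolute convergence and the interchange of sums have been secured.
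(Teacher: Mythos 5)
Your proposal is correct and follows essentially the same route as the paper: expand $f$ as $\sum_n A_n(x)y^n$ via absolute convergence of the double Taylor series, derive the recurrence $A_{n+1}=\mathcal{D}_{q,x}\{A_n\}/(1-q^{n+1})$, iterate to get $A_n=\mathcal{D}_{q,x}^n\{f(x,0)\}/(q;q)_n$, apply $\mathcal{D}_{q,x}^n\{x^k\}=(q;q)_k x^{k-n}/(q;q)_{k-n}$, and interchange the summations to collect the $h_k(x,y|q)$, with the converse handled by \reff{rseqn3}. Your closing emphasis on the absolute-convergence justification is exactly the role played by Proposition~\ref{mcomplexpp} in the paper's argument.
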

This proposition can be extended to the following more general expansion  theorem
for the  analytic functions in several variables, which is the main
result of this paper.
\begin{thm}\label{mainthmliu}
If $f(x_1,y_1, \ldots, x_k, y_k)$  is a $2k$-variable
 analytic function at $(0,0, \cdots, 0)\in \mathbb{C}^{2k}$, then,  we have
\begin{itemize}
 \item[(i)]
 $f$ can be expanded in terms of $h_{n_1}(x_1, y_1|q)\cdots h_{n_k}(x_k, y_k|q)$  if and only if  $f$
 satisfies the $q$-partial differential equations
 $
 \partial_{q, x_j}\{f\}=\partial_{q, y_j}\{f\}~\text{for}~j=1, 2, \ldots,  k.
 $
\item[(ii)]
 $f$ can be expanded in terms of $g_{n_1}(x_1, y_1|q)\cdots g_{n_k}(x_k, y_k|q)$  if and only if $f$
 satisfies the $q$-partial differential equation
$
\partial_{ q^{-1}, x_j}\{f\}=\partial_{q^{-1}, y_j}\{f\}~\text{for}~j=1, 2, \ldots,  k.
$\end{itemize}
\end{thm}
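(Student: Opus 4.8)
The strategy is to establish (i) directly for arbitrary $k$, so that Proposition~\ref{qppmotivation} is recovered as the case $k=1$, and then to deduce (ii) from (i) by the substitution $q\mapsto q^{-1}$, using $g_n(x,y|q)=h_n(x,y|q^{-1})$ together with \reff{rseqn4} in place of \reff{rseqn3}. The ``only if'' direction is the routine one. If $f=\sum_{\mathbf n}c_{\mathbf n}\,h_{n_1}(x_1,y_1|q)\cdots h_{n_k}(x_k,y_k|q)$ converges in a polydisc about the origin, then differentiating term by term (legitimate on compact subsets of the domain of convergence, where the series converges uniformly and where $|qx_j|<|x_j|$ keeps us inside the domain), \reff{rseqn3} gives $\partial_{q,x_j}\{h_{n_j}\}=\partial_{q,y_j}\{h_{n_j}\}=(1-q^{n_j})h_{n_j-1}$. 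Since the remaining factors involve neither $x_j$ nor $y_j$, each term satisfies $\partial_{q,x_j}=\partial_{q,y_j}$, and summing yields $\partial_{q,x_j}\{f\}=\partial_{q,y_j}\{f\}$ for every $j$.

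For the ``if'' direction I would argue as in Proposition~\ref{ppmotivation}, but carrying all the $y$-variables at once. Since $f$ is analytic at the origin, the theory of functions of several complex variables lets me expand
\[
f(x_1,y_1,\dots,x_k,y_k)=\sum_{\mathbf n\ge\mathbf 0}A_{\mathbf n}(x_1,\dots,x_k)\,y_1^{n_1}\cdots y_k^{n_k},
\]
with each coefficient $A_{\mathbf n}$ analytic near $0\in\mathbb C^{k}$. Using $\partial_{q,y_j}\{y_j^{\,n}\}=(1-q^{n})y_j^{\,n-1}$ and inserting this expansion into $\partial_{q,x_j}\{f\}=\partial_{q,y_j}\{f\}$, I equate coefficients of $y_1^{n_1}\cdots y_k^{n_k}$ to obtain the recurrence
\[
\mathcal{D}_{q,x_j}\{A_{\mathbf n}\}=(1-q^{\,n_j+1})\,A_{\mathbf n+\mathbf e_j},\qquad j=1,\dots,k,
\]
where $\mathbf e_j$ is the $j$th unit multi-index. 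Because $q$-derivatives in distinct variables commute, iterating these relations expresses every coefficient through the single initial coefficient $A_{\mathbf 0}(x_1,\dots,x_k)=f(x_1,0,\dots,x_k,0)$:
\[
A_{\mathbf n}=\frac{\mathcal{D}_{q,x_1}^{\,n_1}\cdots\mathcal{D}_{q,x_k}^{\,n_k}\{A_{\mathbf 0}\}}{(q;q)_{n_1}\cdots(q;q)_{n_k}}.
\]
In particular, a solution of the full system is uniquely determined by its restriction to $y_1=\cdots=y_k=0$.

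To finish, I write $f(x_1,0,\dots,x_k,0)=\sum_{\mathbf n}c_{\mathbf n}\,x_1^{n_1}\cdots x_k^{n_k}$ and form the candidate series $g=\sum_{\mathbf n}c_{\mathbf n}\,h_{n_1}(x_1,y_1|q)\cdots h_{n_k}(x_k,y_k|q)$. By the ``only if'' direction $g$ solves the same system, and since $h_n(x,0|q)=x^n$ one has $g(x_1,0,\dots,x_k,0)=f(x_1,0,\dots,x_k,0)$; the uniqueness just established then forces $f=g$, which is precisely the asserted expansion. The step that genuinely requires care is convergence of $g$ near the origin. This follows from Cauchy estimates $|c_{\mathbf n}|\le M\prod_j r^{-n_j}$ afforded by the analyticity of $f$, together with the elementary bound ${n\brack k}_q\le 1/(q;q)_\infty$, which yields $|h_n(x,y|q)|\le (n+1)(q;q)_\infty^{-1}\max(|x|,|y|)^n$ and hence geometric-times-polynomial decay of the terms for $|x_j|,|y_j|$ small. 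I expect this convergence bookkeeping, rather than the algebra, to be the main obstacle: the coefficient matching, the recurrence, and the reconstruction are all straightforward once the several-variable power-series expansion is in hand. Finally, part (ii) is obtained verbatim with $\mathcal{D}_{q,x_j}$ replaced by $\mathcal{D}_{q^{-1},x_j}$ and $h_{n_j}$ by $g_{n_j}$, the role of \reff{rseqn3} being played by \reff{rseqn4} and the identities $\partial_{q^{-1},y}\{y^n\}=(1-q^{-n})y^{n-1}$ and $g_n(x,0|q)=x^n$ being exactly parallel.
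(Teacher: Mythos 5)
Your part (i) is sound, and it takes a genuinely different route from the paper. The paper first proves the case $k=1$ (Proposition~\ref{qppmotivation}) by expanding $f=\sum_{m,n}\alpha_{m,n}x^my^n$, using the $q$-PDE to derive $\alpha_{m,n}=\alpha_{m+n,0}{m+n\brack n}_q$, and then \emph{regrouping} the absolutely convergent Taylor series into $\sum_N\alpha_{N,0}h_N(x,y|q)$; the general case follows by induction on $k$, expanding in $h_{n_1}(x_1,y_1|q)$ and checking that the coefficient functions are analytic and satisfy the remaining equations. You instead treat all $2k$ variables at once, prove that a solution is determined by its restriction to $y_1=\cdots=y_k=0$, and exhibit the expansion as a convergent candidate series matched by uniqueness. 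Both work for (i): the paper's regrouping gets convergence for free (the $h$-expansion is a rearrangement of the Taylor series of $f$, so no estimate on $h_n$ is needed), while your argument needs the bound ${n\brack k}_q\le 1/(q;q)_\infty$ but avoids induction and makes the uniqueness principle explicit.

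The gap is in part (ii), which you claim follows ``verbatim.'' It does not: the convergence step collapses. Since ${n\brack k}_{q^{-1}}=q^{k(k-n)}{n\brack k}_q$ and $0<q<1$, the coefficients of $g_n(x,y|q)=\sum_k{n\brack k}_q q^{k(k-n)}x^ky^{n-k}$ are unbounded, the middle one being of size $q^{-n^2/4}$ at least, so the analogue of your estimate is $|g_n(x,y|q)|\le (n+1)(q;q)_\infty^{-1}q^{-n^2/4}\max(|x|,|y|)^n$, and this lower-order behavior is actually attained. Cauchy estimates $|c_{\mathbf n}|\le M\prod_j r^{-n_j}$ are therefore insufficient: for a generic sequence with only that decay (e.g. $c_n=r^{-n}$, $k=1$, $x=y>0$), the series $\sum_{\mathbf n} c_{\mathbf n}\,g_{n_1}\cdots g_{n_k}$ diverges at every point with nonzero coordinates, because the factor $q^{-n^2/4}$ overwhelms any geometric term. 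The theorem is still true because a solution of the $q^{-1}$-system cannot have generic Taylor coefficients: the recurrence forces $\alpha_{m,n}=\alpha_{m+n,0}{m+n\brack n}_{q^{-1}}$, and absolute convergence of $\sum_{m,n}|\alpha_{m,n}|\,|x|^m|y|^n$ then compels $|\alpha_{N,0}|$ to decay superexponentially (roughly like $q^{N^2/4}$ times a geometric factor). So to repair (ii) you must either extract this extra decay from the PDE before estimating your candidate series, or--simpler, and essentially what the paper's method does--drop the candidate-series-plus-uniqueness device and observe that the coefficient relation $\alpha_{m,n}=\alpha_{m+n,0}{m+n\brack n}_{q^{-1}}$ turns the absolutely convergent Taylor series of $f$ itself into the $g$-expansion by grouping the terms with $m+n=N$. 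That rearrangement argument is insensitive to the size of the $q$-binomial coefficients and treats (i) and (ii) identically, which is precisely why the paper can dismiss (ii) as ``similar'' while your proof cannot.
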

Proposition~\ref{qppmotivation} is the special case $k=1$ of Theorem~\ref{mainthmliu}.
This theorem is useful in $q$-series, which allows us to develop a general method for proving $q$-identities.
Many applications of this expansion theorem to $q$-series are discussed in this paper.

To determine if a given function is an analytic functions in several complex variables,
we often use the following theorem (see, for example, \cite[p. 28]{Taylor}).
\begin{thm}\label{hartogthm} {\rm (Hartog's theorem).}
If a complex valued function $f(z_1, z_2, \ldots, z_n)$ is holomorphic (analytic) in each variable separately in a domain $U\in\mathbb{C}^n,$
then,  it is holomorphic (analytic) in $U.$
\end{thm}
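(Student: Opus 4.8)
The statement is Hartogs' theorem on separate analyticity, and its entire difficulty lies in the fact that joint continuity is \emph{not} assumed: one is only told that $f$ is holomorphic in each $z_j$ when the remaining variables are frozen. If continuity (or merely local boundedness) were granted, the result would be immediate from \emph{Osgood's lemma}: on any closed polydisc contained in $U$ one applies the one-variable Cauchy integral formula successively in each variable and, using boundedness to justify Fubini's theorem, obtains the iterated representation
\[
f(z)=\frac{1}{(2\pi i)^n}\oint_{|\zeta_1-a_1|=r_1}\!\!\cdots\oint_{|\zeta_n-a_n|=r_n}\frac{f(\zeta_1,\ldots,\zeta_n)\,d\zeta_n\cdots d\zeta_1}{(\zeta_1-z_1)\cdots(\zeta_n-z_n)},
\]
from which the convergent multiple power series for $f$, and hence joint holomorphy, drops out. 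So the plan is to reduce the theorem to the single assertion that a separately holomorphic function is automatically locally bounded, and then to spend all the effort establishing that. Because the iterated Cauchy formula works in all dimensions at once, the only claim requiring a two-variable treatment is local boundedness, which I would prove first for $n=2$ and then feed back by induction on $n$.

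Write $f(z,w)$ separately holomorphic on a bidisc $\{|z|<R\}\times\{|w|<R\}$. For each fixed $z$, expand $f(z,w)=\sum_{k\ge 0}a_k(z)w^k$; since $w\mapsto f(z,w)$ is holomorphic on the whole disc, the Cauchy--Hadamard formula gives $\limsup_k|a_k(z)|^{1/k}\le 1/R$ for every $z$. The coefficients $a_k(z)=\frac{1}{2\pi i}\oint_{|w|=r}f(z,w)w^{-k-1}\,dw$ are themselves holomorphic in $z$ (a separately holomorphic function is separately continuous, hence jointly measurable, so Morera's theorem combined with Fubini applies). The decisive device is then potential-theoretic: the functions $u_k(z)=\frac1k\log|a_k(z)|$ are subharmonic and satisfy $\limsup_k u_k(z)\le-\log R$ at every point $z$.

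To upgrade this pointwise control to \emph{uniform} control I would invoke \emph{Hartogs' lemma} on subharmonic functions: a sequence of subharmonic functions that is uniformly bounded above on compacta and has $\limsup$ bounded pointwise by a constant $c$ eventually lies below $c+\varepsilon$ uniformly on each compact set. The required a priori upper bound comes from a Baire category argument: the sets $\{z:\ |a_k(z)|\le e^{mk}\ \text{for all }k\}$ are closed and cover the $z$-disc, so one of them has nonempty interior, producing a subdisc $\Delta_0$ on which the $u_k$ are uniformly bounded above. On $\Delta_0$, Hartogs' lemma then yields $|a_k(z)|\le(e^{\varepsilon}/R)^k$ uniformly, so the $w$-series converges uniformly in $z$ and $f$ is bounded, hence holomorphic by Osgood's lemma, on the tube $\Delta_0\times\{|w|<R\}$.

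The final and most delicate step is to propagate holomorphy from this possibly off-center tube to a full neighbourhood of every point of the bidisc, and the main obstacle is located precisely here. One must show that the upper-semicontinuous regularization of $\limsup_k u_k(z)$, which is subharmonic and already tightly controlled on $\Delta_0$, cannot develop a ``dent'' that would shrink the domain of convergence of the $w$-expansion; the subharmonicity of the logarithm of the Hartogs-radius function, together with the maximum principle, forces the uniform coefficient estimate, and thus local boundedness, to hold across the entire $z$-disc. Once local boundedness is available everywhere, Osgood's lemma closes the two-variable case, and the induction on $n$ recovers the full statement. The heart of the whole argument is therefore the passage from \emph{separate} information to \emph{joint} boundedness, carried out through the subharmonicity of $\frac1k\log|a_k|$ and Hartogs' lemma on subharmonic functions.
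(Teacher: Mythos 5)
First, a point of comparison: the paper does not prove this statement at all --- Hartogs' theorem is imported by citation (to Taylor's book), so your proposal can only be measured against the classical proof, which is indeed the Osgood--Baire--Hartogs-lemma argument you outline. The strategy is the right one, but two steps, as written, have genuine gaps. The main one is the holomorphy of the coefficients $a_k(z)=\frac{1}{2\pi i}\oint_{|w|=r}f(z,w)w^{-k-1}\,dw$ on the \emph{full} $z$-disc, which you justify by ``Morera combined with Fubini.'' This is circular: Fubini requires $f$ to be integrable on the product of the two circles, and Morera requires $a_k$ to be continuous, and neither is available at this stage --- local boundedness of $f$ is precisely what the whole argument is trying to establish, and joint measurability alone does not give integrability. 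The same defect infects your Baire step, since closedness of the sets $\{z:\ |a_k(z)|\le e^{mk}\ \text{for all}\ k\}$ again presupposes continuity of the $a_k$. The standard repair (H\"ormander's ordering) is to run Baire directly on $f$ and in the $w$-variable: the sets $E_M=\{w:\ |f(z,w)|\le M\ \text{for all}\ |z|\le r\}$ are closed because $w\mapsto |f(z,w)|$ is continuous for each fixed $z$, and they cover the $w$-disc because $f(\cdot,w)$ is holomorphic in $z$ (in the induction step, jointly holomorphic by the $(n-1)$-variable hypothesis), hence bounded on $|z|\le r$. Baire then yields a tube $\{|z|\le r\}\times D_0$ that is \emph{full in $z$ and thin in $w$}, on which $f$ is bounded, hence jointly holomorphic by Osgood; defining $a_k$ by Cauchy integrals over a circle of radius $\rho$ inside $D_0$ then makes the $a_k$ holomorphic on the whole $z$-disc with the a priori uniform bound $|a_k|\le M\rho^{-k}$ for free, so Hartogs' lemma on subharmonic functions applies on the full disc rather than only on your subdisc $\Delta_0$.

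The second gap is your final ``propagation'' paragraph, which is not a proof as it stands: the appeal to subharmonicity of the logarithm of the Hartogs-radius function gestures at a different and deeper circle of ideas, and nothing in your text shows the dent cannot occur. With the reordered argument the propagation is automatic: since the $a_k$ are uniformly controlled on the full $z$-disc, Cauchy--Hadamard (pointwise radius $\ge R-|c|$ for the expansion about the center $c$ of $D_0$) plus Hartogs' lemma give locally uniform convergence of $\sum_k a_k(z)(w-c)^k$ on $\{|z|<r\}\times\{|w-c|<R-|c|\}$ --- the radius of joint holomorphy jumps from the width of the thin tube to the full distance-to-boundary --- and applying the Baire argument inside a small disc about each point $w_0$ places $c$ close enough to $w_0$ that these enlarged tubes cover the whole bidisc. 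In short, the ingredients you name (subharmonicity of $\frac1k\log|a_k|$, Hartogs' lemma, Baire category, Osgood) are exactly the right ones, but the order of operations matters: boundedness on a tube must come \emph{before} the coefficient functions are formed, or else the holomorphy and continuity of the $a_k$ --- on which everything downstream rests --- are unjustified.
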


\section{Proof of the expansion theorem}
In order to prove Theorem~\ref{mainthmliu},  we need the following fundamental property of
several complex variables (see, for example, \cite[p. 5, Proposition~ 1]{Malgrange}, \cite[p. 90]{Range}).
\begin{prop}\label{mcomplexpp}
If $f(x_1, x_2, \ldots,  x_k)$ is analytic at the origin $(0, 0, \ldots,  0)\in \mathbb{C}^k$, then,
$f$ can be expanded in an absolutely convergent power series,
 \[
 f(x_1, x_2, \ldots,  x_k)=\sum_{n_1, n_2, \ldots, n_k=0}^\infty \alpha_{n_1, n_2, \ldots, n_k}
 x_1^{n_1} x_2^{n_2}\cdots x_k^{n_k}.
 \]
\end{prop}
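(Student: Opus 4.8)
The plan is to prove this by establishing the multivariate Cauchy integral formula and then expanding the Cauchy kernel as a multiple geometric series. Since $f$ is analytic (holomorphic) at the origin, it is holomorphic in each variable separately on some closed polydisc $\{(z_1,\ldots,z_k): |z_j|\le\rho_j\}$ sitting inside the domain of holomorphy. Fixing a point $(z_1,\ldots,z_k)$ with $|z_j|<\rho_j$, I would apply the one-variable Cauchy integral formula successively in $x_1$, then $x_2$, and so on, using the joint continuity of $f$ on the closed polydisc to invoke Fubini's theorem and combine the resulting iterated contour integrals into a single integral over the distinguished boundary, i.e. the torus $T=\{(\zeta_1,\ldots,\zeta_k):|\zeta_j|=\rho_j\}$. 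This yields
\[
f(z_1,\ldots,z_k)=\frac{1}{(2\pi i)^k}\oint_{|\zeta_1|=\rho_1}\cdots\oint_{|\zeta_k|=\rho_k}\frac{f(\zeta_1,\ldots,\zeta_k)}{(\zeta_1-z_1)\cdots(\zeta_k-z_k)}\,d\zeta_1\cdots d\zeta_k.
\]

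Next, because $|z_j|<\rho_j=|\zeta_j|$ for every $j$, I would expand each factor of the Cauchy kernel as a geometric series,
\[
\frac{1}{\zeta_j-z_j}=\sum_{n_j=0}^{\infty}\frac{z_j^{n_j}}{\zeta_j^{n_j+1}},
\]
which converges uniformly on $T$ since $|z_j/\zeta_j|\le |z_j|/\rho_j<1$. Taking the product over $j=1,\ldots,k$ gives a $k$-fold series for the entire kernel that converges uniformly on $T$. Substituting this into the Cauchy integral formula and interchanging summation with integration, justified by the uniform convergence together with the boundedness of $f$ on $T$, produces exactly the asserted expansion, with coefficients
\[
\alpha_{n_1,\ldots,n_k}=\frac{1}{(2\pi i)^k}\oint_{|\zeta_1|=\rho_1}\cdots\oint_{|\zeta_k|=\rho_k}\frac{f(\zeta_1,\ldots,\zeta_k)}{\zeta_1^{n_1+1}\cdots\zeta_k^{n_k+1}}\,d\zeta_1\cdots d\zeta_k.
\]

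Finally, to obtain the absolute convergence claimed in the statement I would apply the Cauchy estimates: writing $M=\sup_{T}|f|$, the integral formula for the coefficients bounds the contour integral by its length times the sup of the integrand, giving $|\alpha_{n_1,\ldots,n_k}|\le M\,\rho_1^{-n_1}\cdots\rho_k^{-n_k}$. Hence, for $|z_j|<\rho_j$ the series of absolute values is dominated by $M\prod_{j=1}^{k}\sum_{n_j=0}^{\infty}(|z_j|/\rho_j)^{n_j}$, a finite product of convergent geometric series, so the expansion converges absolutely throughout the open polydisc. The main obstacle is the first step, namely passing rigorously from the iterated one-variable Cauchy formula to the single integral over $T$; this is where joint continuity on the closed polydisc is needed to apply Fubini's theorem and to guarantee that the integrand on $T$ is genuinely continuous. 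Everything thereafter is routine once the uniform convergence of the geometric expansion is in hand.
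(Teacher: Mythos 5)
The paper offers no proof of Proposition \ref{mcomplexpp} at all: it is invoked as a fundamental fact of several complex variables, with the reader referred to Malgrange \cite{Malgrange} and Range \cite{Range}. Your argument is exactly the standard proof contained in those references---iterate the one-variable Cauchy formula over a closed polydisc, combine the iterated integrals into a single integral over the distinguished boundary, expand the kernel $\prod_{j}(\zeta_j-z_j)^{-1}$ as a multiple geometric series converging uniformly on the torus, interchange summation and integration, and read off absolute convergence from the Cauchy estimates $|\alpha_{n_1,\ldots,n_k}|\le M\rho_1^{-n_1}\cdots\rho_k^{-n_k}$. The proof is correct, and each interchange is justified at the point where you perform it. The only point worth flagging is at the very start: whether joint continuity of $f$ on the closed polydisc is available depends on what ``analytic at the origin'' is taken to mean. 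If it means jointly holomorphic (the convention in the paper's sources), your appeal to continuity and Fubini is immediate; if it is taken to mean only holomorphic in each variable separately, then continuity is a genuinely nontrivial fact, namely Hartogs' theorem, which the paper records as Theorem \ref{hartogthm} and which you would need to invoke before the Fubini step. With that convention made explicit, your write-up is a complete and self-contained proof of a statement the paper outsources to the literature, and it is the same argument the cited references give.
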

Now we begin to prove Proposition~\ref{qppmotivation} using Proposition~\ref{mcomplexpp}.
\begin{proof}
The proofs of (i) and (ii) are similar, so we only prove (i).  Since $f$ is analytic at $(0, 0),$
 from Proposition~\ref{mcomplexpp}, we know that $f$ can be expanded in
an absolutely convergent power series in a neighborhood of $(0, 0)$.
Thus there exists a sequence $\{\alpha_{m, n}\}$ independent of $x$ and $y$ such that
\begin{equation}
 f(x, y)=\sum_{m, n}^\infty \alpha_{m, n} x^my^n=\sum_{n=0}^\infty y^n \left\{\sum_{m=0}^\infty \alpha_{m, n} x^m\right\}.
 \label{pthm:eqn1}
\end{equation}
Substituting this into the $q$-partial differential equation $\partial_{q, x}\{f(x, y)\}=\partial_{q, y}\{f(x, y)\}$
and using the fact $\partial_{q, y}\{y^n\}=(1-q^n)y^{n-1},$ we find that
\[
\sum_{n=0}^\infty y^n \partial_{q, x}\left\{\sum_{m=0}^\infty \alpha_{m, n} x^m\right\}
=\sum_{n=1}^\infty (1-q^n)y^{n-1}\left\{\sum_{m=0}^\infty \alpha_{m, n} x^m\right\}.
\]
Equating the coefficients of $y^{n-1}$ on both sides of the above equation, we easily deduce that
\[
\sum_{m=0}^\infty \alpha_{m, n} x^m=\frac{1}{1-q^n} \partial_{q, x}\left\{\sum_{m=0}^\infty \alpha_{m, n-1} x^m\right\}.
\]
Iterating the above equation $(n-1)$ times, we conclude that
\[
\sum_{m=0}^\infty \alpha_{m, n} x^m=\frac{1}{(q; q)_n} \partial^n_{q, x}\left\{\sum_{m=0}^\infty \alpha_{m, 0} x^m\right\}.
\]
With the help of  the identity,  ${\partial}^n_{q, x}\{x^m\}=(q; q)_m x^{m-n}/(q; q)_{m-n}$, we obtain
\[
\sum_{m=0}^\infty \alpha_{m, n} x^m=\sum_{m=n}^\infty \alpha_{m, 0}{m\brack n}_q x^{m-n}.
\]
Noting that the series in (\ref{pthm:eqn1}) is absolutely convergent, substituting the above equation
into (\ref{pthm:eqn1}) and interchanging the order of the summation, we deduce that
\[
f(x, y)=\sum_{m=0}^\infty \alpha_{m, 0} \sum_{n=0}^m {m \brack n}_qy^n x^{m-n}
=\sum_{m=0}^\infty \alpha_{m, 0}h_m(x, y|q),
\]
since $h_n(x, y|q)$ is symmetric in $x$ and $y$. Conversely, if $f(x, y)$ can be expanded in terms of $h_n(x, y|q), $
then using (\ref{rseqn3}),  we find that $\partial_{q, x}\{f(x, y)\}=\partial_{q, y}\{f(x, y)\}.$
This completes the proof of Proposition~\ref{qppmotivation}.
\end{proof}
\begin{rem}{\rm It can be shown that Proposition~\ref{qppmotivation} is equivalent to \cite[Theorem~4]{Liu2010}, but the proof in \cite{Liu2010}
is less rigorous. This paper may be viewed as an improved of improved version of \cite{Liu2010}.}
\end{rem}
Now we begin to prove Theorem~\ref{mainthmliu} by using Proposition~\ref{qppmotivation} and mathematical induction.
\begin{proof} The proof of (ii) is similar to that of (i), so we only prove (i). From  Proposition~\ref{qppmotivation}
we conclude that the theorem holds when $k=1.$ Now, we assume that the theorem is true for the case $k-1$ and consider the case $k$.
If we regard $f(x_1, y_1, \ldots, x_k, y_k)$ as a function of $x_1$ and $y_1, $ then $f$ is analytic at $(0, 0)$ and satisfies
$\partial_{q, x_1}\{f\}=\partial_{q, y_1}\{f\}.$ Thus from (i) in Proposition~\ref{qppmotivation}, there exists a sequence
$\{c_{n_1}(x_2, y_2, \ldots, x_k, y_k)\}$ independent of $x_1$ and $y_1$ such that
\begin{equation}
f(x_1, y_1, \ldots, x_k, y_k)=\sum_{n_1=0}^\infty c_{n_1}(x_2, y_2, \ldots, x_k, y_k)h_{n_1}(x_1, y_1|q).
\label{pthm:eqn2}
\end{equation}
Setting $y_1=0$ in the above equation and using $h_{n_1}(x_1, 0|q)=x_1^{n_1},$ we obtain
\[
f(x_1, 0, x_2, y_2,  \ldots, x_k, y_k)=\sum_{n_1=0}^\infty c_{n_1}(x_2, y_2, \ldots, x_k, y_k)x_1^{n_1}.
\]
Using the Maclaurin expansion theorem, we immediately deduce that
\[
c_{n_1}(x_2, y_2, \ldots, x_k, y_k)=\frac{\partial^{n_1} f(x_1, 0, x_2, y_2,  \ldots, x_k, y_k)}{{n_1}!\partial {x_1}^{n_1}}\Big|_{x_1=0}
\]
Since $f(x_1, y_1, \ldots, x_k, y_k)$ is analytic near $(x_1, y_1, \ldots, x_k, y_k)=(0, \ldots, 0)\in \mathbb{C}^{2k},$ from
the above equation, we know that $c_{n_1}(x_2, y_2, \ldots, x_k, y_k)$ is analytic near $(x_2, y_2, \ldots, x_k, y_k)=(0, \ldots, 0)\in \mathbb{C}^{2k-2}.$ Combining (\ref{pthm:eqn2}) with (i) in Theorem~\ref{mainthmliu}, we find, for $j=2, \ldots k$, that
\begin{align*}
&\sum_{n_1=0}^\infty \partial_{q, x_j}\{c_{n_1}(x_2, y_2, \ldots, x_k, y_k)\} h_{n_1}(x_1, y_1|q)\\
&=\sum_{n_1=0}^\infty \partial_{q, y_j}\{c_{n_1}(x_2, y_2, \ldots, x_k, y_k)\} h_{n_1}(x_1, y_1|q).
\end{align*}
By equating the coefficients of $h_{n_1}(x_1, y_1|q)$ in the above equation, we find that
for $j=2, \ldots,  k,$
\[
\partial_{q, x_j}\{c_{n_1}(x_2, y_2, \ldots, x_k, y_k)\}
=\partial_{q, y_j}\{c_{n_1}(x_2, y_2, \ldots, x_k, y_k)\}.
\]
Thus by the inductive hypothesis, there exists a sequence $\{\alpha_{n_1, n_2, \ldots, n_k}\}$ independent of
$x_2, y_2, \ldots, x_k, y_k$ (of course independent of $x_1$ and $y_1$) such that
\[
c_{n_1}(x_2, y_2, \ldots, x_k, y_k)=\sum_{n_2, \ldots, n_k=0}^\infty \alpha_{n_1, n_2, \ldots, n_k}
h_{n_2}(x_2, y_2|q)\ldots h_{n_k}(x_k, y_k|q).
\]
Substituting this equation into (\ref{pthm:eqn2}), we find that $f$ can be expanded
in terms of $h_{n_1}(x_1, y_1|q)\cdots h_{n_k}(x_k, y_k|q).$

Conversely, if $f$ can be expanded in terms of $h_{n_1}(x_1, y_1|q)\cdots h_{n_k}(x_k, y_k|q),$
then using (\ref{rseqn3}),  we find that $\partial_{q, x_j}\{f\}=\partial_{q, y_j}\{f\}$ for
$j=1, 2, \ldots, k.$ This completes the proof of Theorem~\ref{mainthmliu}.
\end{proof}
\section{The generating functions of the Rogers-Szeg\H{o} polynomials and the Stieltjes-Wigert polynomials}
\begin{thm} \label{gefunthm}  If $h_n(x, y|q)$ and $g_n(x, y|q)$ are given by
Definitions~\ref{rspolydefn} and \ref{swpolydefn}, then,  we have
\begin{equation}
\sum_{n=0}^\infty h_n (x, y | q) \frac{t^n}{(q; q)_n}=\frac{1}{(xt, yt; q)_\infty},~\max\{|xt|, |yt|\}<1,
\label{rseqn1}
\end{equation}
 \begin{equation}
\sum_{n=0}^\infty (-1)^n q^{n(n-1)/2} g_n(x, y|q)
\frac{t^n}{(q; q)_n} =(xt, yt; q)_\infty. \label{rseqn2}
\end{equation}
\end{thm}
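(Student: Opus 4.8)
The plan is to prove both identities directly from the definitions of $h_n$ and $g_n$ by unfolding the $q$-binomial coefficient and factoring the resulting double sum into a product of two $q$-exponential series. The only external ingredients are Euler's two classical evaluations, namely $\sum_{n\ge 0} z^n/(q;q)_n = 1/\qq{z}{q}$ for $|z|<1$ and $\sum_{n\ge 0} (-1)^n q^{n(n-1)/2} z^n/(q;q)_n = \qq{z}{q}$, both of which are limiting cases of the $q$-binomial theorem.

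For (\ref{rseqn1}), I would substitute $h_n(x,y|q)=\sum_{k=0}^n {n\brack k}_q x^k y^{n-k}$ and use ${n\brack k}_q=(q;q)_n/\big((q;q)_k(q;q)_{n-k}\big)$ together with $x^k y^{n-k}t^n=(xt)^k(yt)^{n-k}$ to write
\[
\sum_{n=0}^\infty h_n(x,y|q)\fr{t^n}{(q;q)_n} = \sum_{n=0}^\infty\sum_{k=0}^n \fr{(xt)^k (yt)^{n-k}}{(q;q)_k\,(q;q)_{n-k}}.
\]
Setting $j=n-k$ decouples the indices and factors the sum as $\left(\sum_{k\ge 0}(xt)^k/(q;q)_k\right)\left(\sum_{j\ge 0}(yt)^j/(q;q)_j\right)$; applying the first Euler identity to each factor gives $1/\qq{xt}{q}\cdot 1/\qq{yt}{q}=1/\qq{xt, yt}{q}$, which is the claim.

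For (\ref{rseqn2}) the same steps apply to $g_n(x,y|q)=\sum_{k=0}^n {n\brack k}_q q^{k(k-n)} x^k y^{n-k}$, but now I must carry the extra weight $(-1)^n q^{n(n-1)/2} q^{k(k-n)}$. The key algebraic step is the identity $\binom{n}{2}=\binom{k}{2}+\binom{j}{2}+kj$ with $n=k+j$, which I would verify by a one-line expansion. Since $k(k-n)=-kj$, the cross term $kj$ cancels and the total power of $q$ splits as $q^{\binom{k}{2}}q^{\binom{j}{2}}$, while $(-1)^n=(-1)^k(-1)^j$. After the substitution $j=n-k$ the double sum again factors, this time into $\left(\sum_{k\ge 0}(-1)^k q^{k(k-1)/2}(xt)^k/(q;q)_k\right)$ times the analogous sum in $yt$; the second Euler identity then yields $\qq{xt}{q}\,\qq{yt}{q}=\qq{xt, yt}{q}$.

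I do not expect a genuine obstacle here: the whole argument is a reindexing followed by two known evaluations. The only points that require care are the justification for interchanging the order of summation when passing to the factored form — for (\ref{rseqn1}) this is licensed by absolute convergence under the hypothesis $\max\{|xt|,|yt|\}<1$, whereas for (\ref{rseqn2}) the Gaussian factor $q^{n(n-1)/2}$ forces absolute convergence for all $x,y,t$, so no restriction is needed — and the bookkeeping of the exponent of $q$ in the second identity, where the cancellation of the cross term $kj$ against $q^{k(k-n)}$ is precisely what allows the sum to factor.
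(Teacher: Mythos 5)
Your proof is correct, but it follows a different route from the one the paper actually writes out. You expand $h_n$ (resp.\ $g_n$) from the definition, decouple the double sum via $j=n-k$, and invoke Euler's two summations; the exponent bookkeeping $\binom{k+j}{2}=\binom{k}{2}+\binom{j}{2}+kj$ together with $q^{k(k-n)}=q^{-kj}$ is exactly right, and your convergence remarks (absolute convergence for $\max\{|xt|,|yt|\}<1$ in the first case, unrestricted in the second) are sound. This is precisely the alternative the author alludes to in the one-line remark following the proof (``We can also prove Theorem~3.1 by multiplying two copies of $q$-binomial theorem together''), but it is not the proof given. The paper instead uses its own expansion machinery: one checks that $f(x,y)=1/(xt,yt;q)_\infty$ is analytic near $(0,0)$ and satisfies $\partial_{q,x}\{f\}=\partial_{q,y}\{f\}=tf$, so by Proposition~1.7(i) it has an expansion $\sum_n \alpha_n h_n(x,y|q)$; setting $y=0$ and comparing with the $q$-binomial theorem pins down $\alpha_n=t^n/(q;q)_n$ (and similarly for $g_n$ via Proposition~1.7(ii)). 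What each approach buys: yours is elementary, self-contained, and makes the region of validity transparent --- in particular it shows \eqref{rseqn2} holds with no restriction on $x,y,t$; the paper's proof is deliberately chosen as the simplest illustration of the $q$-partial-differential-equation method that is the subject of the paper, the same template (verify the $q$-PDE, expand, specialize $y=0$ to find coefficients) being reused for the $q$-Mehler formulas, Carlitz's extension, and the later transformation formulas.
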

\begin{proof} We only prove (\ref{rseqn1}). The proof of (\ref{rseqn2}) is similar, so is omitted.
It is well-known that $1/(xt; q)_\infty$ is an analytic function of $x$ for $|xt|<1,$ and
 $1/(yt; q)_\infty$ is an analytic function of $y$ for $|yt|<1.$ Thus, $1/(xt, yt; q)_\infty$
 is an analytic function of $x$ and $y$ for $\max\{|xt|, |yt|\}<1$. If we use $f(x, y)$ to denote
 the right-hand side of (\ref{rseqn1}), then $f(x, y)$ is analytic near $(0, 0)\in \mathbb{C}^2.$
 A direct computation shows that
 \[
 \partial_{q, x}\{f(x, y)\}= \partial_{q, y}\{f(x, y)\}=tf(x, y).
 \]
 Thus by (i) in Proposition~\ref{qppmotivation}, there exists a sequence $\{\alpha_n\}$ independent of $x$ and $y$
such that
\[
f(x, y)=\frac{1}{(xt, yt; q)_\infty}=\sum_{n=0}^\infty \alpha_n h_n(x, y|q).
\]
Taking $y=0$ in the above equation,   using $h_n(x, 0|q)=x^n$ and the $q$-binomial theorem,  we obtain
\[
\frac{1}{(xt; q)_\infty}=\sum_{n=0}^\infty \frac{(xt)^n}{(q; q)_n}=\sum_{n=0}^\infty \alpha_n x^n.
\]
Equating the coefficients of $x^n$ on both sides of the above equation, we deduce that
$\alpha_n=t^n/(q; q)_n.$ Thus, we arrive at (\ref{rseqn1}).
\end{proof}
We can also prove Theorem~\ref{gefunthm} by multiplying two copies of $q$-binomial theorem together.
\section{$q$-Mehler formulas for the Rogers-Szeg\H{o} polynomials and the Stieltjes-Wigert polynomials}
Using Theorem~\ref{mainthmliu},  we can derive easily the $q$-Mehler formulas for the Rogers-Szeg\H{o} polynomials and
the Stieltjes-Wigert polynomials.
\begin{thm}\label{aqmehler} If $\max\{|xut|, |xvt|, |yut|, |yvt|\}<1$, then, we have
\begin{equation*}
\sum_{n=0}^\infty h_n (x, y|q) h_n (u, v|q) \frac {t^n}{(q; q)_n}
=\frac{(xyuvt^2; q)_\infty} {(xut, xvt, yut, yvt; q)_\infty}.
\end{equation*}
\end{thm}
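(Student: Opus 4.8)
The plan is to regard $t$ as a fixed parameter and to recognize the right-hand side as an analytic function of the four variables $x,y,u,v$ to which Theorem~\ref{mainthmliu} applies with $k=2$ and the two pairs $(x_1,y_1)=(x,y)$, $(x_2,y_2)=(u,v)$. Write $F(x,y,u,v)=\frac{(xyuvt^2;q)_\infty}{(xut,xvt,yut,yvt;q)_\infty}$ for the right-hand side. First I would record that each of the factors $1/(xut;q)_\infty,\ 1/(xvt;q)_\infty,\ 1/(yut;q)_\infty,\ 1/(yvt;q)_\infty$ and the numerator $(xyuvt^2;q)_\infty$ is analytic in each of its variables in the polydisc where $\max\{|xut|,|xvt|,|yut|,|yvt|\}<1$; since the denominators do not vanish near the origin, Hartog's theorem (Theorem~\ref{hartogthm}) shows that $F$ is analytic near $(0,0,0,0)\in\mathbb{C}^4$.

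Next I would verify the two systems of $q$-partial differential equations required by Theorem~\ref{mainthmliu}, namely $\partial_{q,x}\{F\}=\partial_{q,y}\{F\}$ and $\partial_{q,u}\{F\}=\partial_{q,v}\{F\}$. The key tool is the elementary shift $(qa;q)_\infty=(a;q)_\infty/(1-a)$. Applying it to the three $x$-dependent factors of $F$ gives $F(qx,y,u,v)=F(x,y,u,v)\cdot\frac{(1-xut)(1-xvt)}{1-xyuvt^2}$, whence from the definition of the $q$-derivative
\begin{equation*}
\partial_{q,x}\{F\}=\frac{F}{x}\left(1-\frac{(1-xut)(1-xvt)}{1-xyuvt^2}\right)=\frac{t\,(u+v-uvt(x+y))}{1-xyuvt^2}\,F.
\end{equation*}
The decisive observation is that the resulting multiplier is symmetric in $x$ and $y$, as is $F$ itself; repeating the computation with $y$ active yields the identical expression, so $\partial_{q,x}\{F\}=\partial_{q,y}\{F\}$. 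Because $F$ is likewise symmetric in $u$ and $v$, the same argument gives $\partial_{q,u}\{F\}=\partial_{q,v}\{F\}$. I expect this $q$-differential check to be the only real work, but the symmetry of $F$ in each pair of variables collapses it to a single short manipulation.

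With both systems verified, Theorem~\ref{mainthmliu}(i) guarantees a double expansion $F(x,y,u,v)=\sum_{m,n\ge 0}\beta_{m,n}\,h_m(x,y|q)\,h_n(u,v|q)$ for some constants $\beta_{m,n}$ independent of the four variables. To pin these down I would set $y=0$ and $v=0$, use $h_m(x,0|q)=x^m$ and $h_n(u,0|q)=u^n$, and note that $F(x,0,u,0)=1/(xut;q)_\infty$. The $q$-binomial theorem expands the latter as $\sum_{j\ge 0}(xut)^j/(q;q)_j$, which contains only the diagonal monomials $x^j u^j$; equating coefficients of $x^m u^n$ forces $\beta_{m,n}=0$ for $m\neq n$ and $\beta_{n,n}=t^n/(q;q)_n$. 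Substituting these coefficients back into the expansion recovers the claimed formula, completing the proof.
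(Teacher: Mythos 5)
Your proof is correct, but it follows a genuinely different route from the paper's. The paper treats $u$, $v$, $t$ as parameters and applies only the one-pair case (Proposition~\ref{qppmotivation}(i)): it expands the right-hand side as $\sum_n \alpha_n h_n(x,y|q)$, sets $y=0$, and identifies $\alpha_n = h_n(u,v|q)\,t^n/(q;q)_n$ by invoking the generating function \reff{rseqn1} of Theorem~\ref{gefunthm} to expand $1/(xut,xvt;q)_\infty$. You instead invoke the full two-pair case of Theorem~\ref{mainthmliu}, verify both systems of $q$-partial differential equations (your computed multiplier $t(u+v-uvt(x+y))/(1-xyuvt^2)$ agrees with the paper's, and the symmetry argument that collapses the verification to one calculation is a clean way to organize what the paper calls ``a direct computation''), and then specialize $y=v=0$ so that only the $q$-binomial theorem is needed: the diagonal constraint $\beta_{m,n}=0$ for $m\neq n$ falls out of matching $\sum_{m,n}\beta_{m,n}x^m u^n$ against $\sum_j (xut)^j/(q;q)_j$. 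What your approach buys is independence from Theorem~\ref{gefunthm} (the generating function) at the cost of using the stronger $k=2$ expansion theorem; it is essentially the same strategy the paper itself uses later for the Andrews--Askey extension (Theorem~\ref{aaliuthm}), where the double expansion with $b=d=0$ plays the role of your $y=v=0$ specialization. The paper's route is marginally shorter in context only because Theorem~\ref{gefunthm} has already been established two sections earlier. One common caveat, which applies equally to the paper's own proof: the expansion theorem operates in a neighborhood of the origin, so strictly speaking the identity on the full stated polydisc $\max\{|xut|,|xvt|,|yut|,|yvt|\}<1$ follows by analytic continuation, a step neither you nor the paper makes explicit.
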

\begin{thm}\label{bqmehler} If  $|xyuvt^2/q|<1$, then, we have the identity
 \begin{equation*}
 \sum_{n=0}^\infty(-1)^n  g_n (x, y|q) g_n (u, v|q) \frac {q^{n(n-1)/2}t^n}{(q; q)_n}
 =\frac {(xut, xvt, yut, yvt; q)_\infty}{(xyuvt^2/q; q)_\infty}.
 \end{equation*}
 \end{thm}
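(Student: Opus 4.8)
The plan is to mirror the derivation of Theorem~\ref{aqmehler}, but now invoking part (ii) of Theorem~\ref{mainthmliu} in place of part (i). Write $F(x,y,u,v)$ for the right-hand side,
\[
F(x,y,u,v)=\frac{(xut,xvt,yut,yvt;q)_\infty}{(xyuvt^2/q;q)_\infty},
\]
regarding $t$ as a parameter and $x,y,u,v$ as the four variables. First I would record that $F$ is analytic in a neighbourhood of the origin of $\mathbb{C}^4$: the numerator is entire, while $(z;q)_\infty$ has no zero for $|z|<1$, so the denominator is nonvanishing and $F$ is analytic throughout the region $|xyuvt^2/q|<1$ named in the statement. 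I would also note that $F$ is symmetric under $x\leftrightarrow y$ and under $u\leftrightarrow v$.

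The heart of the argument is to verify the two $q^{-1}$-partial differential equations $\partial_{q^{-1},x}\{F\}=\partial_{q^{-1},y}\{F\}$ and $\partial_{q^{-1},u}\{F\}=\partial_{q^{-1},v}\{F\}$. The key elementary fact is $(q^{-1}a;q)_\infty=(1-q^{-1}a)(a;q)_\infty$. Applying it to the two numerator factors containing $x$ and to the denominator, I find
\[
F(q^{-1}x,y,u,v)=\frac{(1-q^{-1}xut)(1-q^{-1}xvt)}{1-xyuvt^2/q^2}\,F(x,y,u,v),
\]
so that $\partial_{q^{-1},x}\{F\}=x^{-1}\bigl(F-F(q^{-1}x,y,u,v)\bigr)$ collapses, after clearing the common denominator, to
\[
\partial_{q^{-1},x}\{F\}=\frac{q^{-1}t\bigl(u+v-q^{-1}uvt(x+y)\bigr)}{1-xyuvt^2/q^2}\,F.
\]
This reduced expression is manifestly symmetric in $x$ and $y$, and the identical computation for the $y$-derivative returns the same right-hand side; hence the two coincide. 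The symmetry $u\leftrightarrow v$ yields the second equation in the same way. I expect this computation to be the only genuine obstacle, but the symmetry of the reduced expression makes the two sides agree without further work.

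With both equations in hand, part (ii) of Theorem~\ref{mainthmliu} (the case $k=2$, with $x_1=x,\ y_1=y,\ x_2=u,\ y_2=v$) guarantees a double expansion
\[
F(x,y,u,v)=\sum_{m,n=0}^\infty \beta_{m,n}\,g_m(x,y|q)\,g_n(u,v|q)
\]
with coefficients $\beta_{m,n}$ independent of the variables. To pin these down I would set $y=0$ and $v=0$. Since $g_n(x,0|q)=x^n$, three of the four numerator factors and the whole denominator reduce to $1$, leaving $F(x,0,u,0)=(xut;q)_\infty$; expanding this by Euler's identity (equivalently, equation (\ref{rseqn2}) with $y=0$) gives
\[
(xut;q)_\infty=\sum_{k=0}^\infty \frac{(-1)^k q^{k(k-1)/2}t^k}{(q;q)_k}\,x^k u^k.
\]
Comparing with $\sum_{m,n}\beta_{m,n}x^m u^n$ forces $\beta_{m,n}=0$ whenever $m\neq n$ and $\beta_{n,n}=(-1)^n q^{n(n-1)/2}t^n/(q;q)_n$; substituting these back collapses the double sum to its diagonal and produces exactly the stated identity. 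I would add that the tempting shortcut of formally replacing $q$ by $q^{-1}$ in Theorem~\ref{aqmehler} points to the same answer but is not rigorous, since the resulting infinite products have base $q^{-1}>1$ and diverge, which is precisely why the direct route through Theorem~\ref{mainthmliu} is required.
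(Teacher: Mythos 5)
Your proposal is correct, and its computations check out (the $q^{-1}$-derivative reduction, the specialization $F(x,0,u,0)=(xut;q)_\infty$, and the resulting diagonal coefficients are all right), but it follows a genuinely different route from the paper's. The paper never leaves the two-variable setting: it regards the right-hand side as a function $g(x,y)$ of the pair $(x,y)$ alone, with $u,v,t$ held as parameters, verifies the single equation $\partial_{q^{-1},x}\{g\}=\partial_{q^{-1},y}\{g\}$ (the same computation you performed), and invokes Proposition~\ref{qppmotivation}(ii) --- the $k=1$ case --- to write $g(x,y)=\sum_n\beta_n g_n(x,y|q)$, where now the coefficients $\beta_n$ are allowed to depend on $u$ and $v$. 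Setting $y=0$ and expanding $(xut,xvt;q)_\infty$ by the generating function (\ref{rseqn2}) then identifies $\beta_n=(-1)^nq^{n(n-1)/2}g_n(u,v|q)\,t^n/(q;q)_n$ directly. You instead treat all four variables on an equal footing, verify the second equation $\partial_{q^{-1},u}\{F\}=\partial_{q^{-1},v}\{F\}$ as well (which costs nothing, by symmetry), and call on the $k=2$ case of Theorem~\ref{mainthmliu}(ii) to obtain a double expansion $\sum_{m,n}\beta_{m,n}\,g_m(x,y|q)\,g_n(u,v|q)$ with scalar coefficients; the double specialization $y=v=0$ then needs only Euler's one-variable expansion of $(xut;q)_\infty$, and the vanishing of the off-diagonal coefficients emerges automatically. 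What your approach buys: constant coefficients, a coefficient identification requiring only Euler's identity rather than the full generating function for $g_n$, and a genuine use of the multivariable expansion theorem that the paper itself only exercises elsewhere (e.g.\ in Theorem~\ref{aaliuthm}). What the paper's approach buys: it stays within the lighter $k=1$ proposition, verifies one $q$-partial differential equation instead of two, and sidesteps the (mild, and elsewhere tacitly accepted) issue of justifying termwise specialization of a double series. Both arguments operate at the same level of rigor, and your closing remark --- that the formal substitution $q\mapsto q^{-1}$ in Theorem~\ref{aqmehler} is not a valid shortcut --- is a sound observation.
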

The $q$-Mehler formula for the Rogers-Szeg\H{o}  polynomials  was first given by
Rogers~\cite{Rogers1893} in 1893 and later reproved by Carlitz~\cite{Carlitz}.
The $q$-Mehler formula  for $g_n(x, y|q)$ was first proved by L. Carlitz \cite{Carlitz}.

\begin{proof} We first prove Theorem~\ref{aqmehler}. If we use $f(x, y)$ to denote the right-hand side of
the equation in Theorem~\ref{aqmehler}, it is obvious that $f(x, y)$ is analytic in $x$ and $y$ separately,
so by Hartog's theorem, we know that $f(x, y)$ is analytic at $(0, 0)$.  Using
the identity $(z; q)_\infty=(1-z)(qz; q)_\infty$ and a direct computation, we find that
\[
\partial_{q, x}\{f(x, y)\}=\partial_{q, y}\{f(x, y)\}
=\frac{t(u+v)-xuvt^2-yuvt^2}{1-xyuvt^2} f(x, y).
\]
Thus by (i) in Proposition~\ref{qppmotivation}, there exists a sequence $\{\alpha_n\}$ independent of $x$ and $y$
such that
\begin{equation}
\frac{(xyuvt^2; q)_\infty} {(xut, xvt, yut, yvt; q)_\infty}=\sum_{n=0}^\infty
\alpha_n h_n(x, y|q).
\label{meqn1}
\end{equation}
Putting $y=0$ in the above equation, using the fact $h_n(x, 0|q)=x^n,$
and the generating function for  $h_n$ in (\ref{rseqn1}),  we find that
\[
\sum_{n=0}^\infty \alpha_n x^n=\frac{1}{(xut, xvt; q)_\infty}
=\sum_{n=0}^\infty h_n(u, v|q)\frac{(xt)^n}{(q; q)_n}.
\]
Equating the coefficients of $x^n$ on both sides of the above equation, we deduce that
$\alpha_n=h_n(u, v) t^n/(q; q)_n.$ Substituting this into (\ref{meqn1}), we complete
the proof of Theorem~\ref{aqmehler}.

Now we turn to prove Theorem~\ref{bqmehler}. Denote the right-hand side of the equation in
Theorem~\ref{bqmehler} by $g(x, y)$, then,  by
the identity,  $(z; q)_\infty=(1-z)(qz; q)_\infty$ and a direct computation, we deduce that
\[
\partial_{q^{-1}, x}\{g(x, y)\}=\partial_{q^{-1}, y}\{g(x, y)\}
=\frac{qt(u+v)-(x+y)uvt^2}{q^2-xyuvt^2} g(x, y).
\]
Thus by  (ii) in Proposition~\ref{qppmotivation}, there exists a sequence $\{\beta_n\}$ independent of $x$ and $y$
such that
\begin{equation}
\frac {(xut, xvt, yut, yvt; q)_\infty}{(xyuvt^2/q; q)_\infty}=\sum_{n=0}^\infty \beta_n g_n(x, y|q).
\label{meqn2}
\end{equation}
Setting $y=0$ in the above equation,   using the fact $g_n(x, 0|q)=x^n,$
and the generating function for  $g_n$ in (\ref{rseqn2}),  we find that
\[
\sum_{n=0}^\infty \beta_n x^n =(xut, xvt; q)_\infty
=\sum_{n=0}^\infty (-1)^n q^{n(n-1)/2} g_n(u, v|q)\frac{(xt)^n}{(q; q)_n}.
\]
It follows that $\beta_n=(-1)^n q^{n(n-1)/2}g_n(u, v|q) t^n/(q; q)_n.$
Substituting this into (\ref{meqn2}), we complete the proof of Theorem~\ref{bqmehler}.
\end{proof}
\section{Carlitz's extension of the $q$-Mehler formula for the Rogers-Szeg\H{o} polynomials }
Carlitz's extension of the $q$-Mehler formula for the Rogers-Szeg\H{o} polynomials \cite[p. 96, Eq. (4.1)]{Carlitz1972}
is equivalent to the following theorem.  In this section we will prove it using Proposition~\ref{qppmotivation}.
\begin{thm} \label{Carlitzthm} For $\max\{|aut|, |but|, |avt|, |bvt|\}<1,$ we have that
\begin{align*}
\sum_{n=0}^\infty h_{n+k}(a, b|q) h_n(u, v|q) \frac{t^n}{(q; q)_n}
&=\frac{(abuvt^2; q)_\infty}{(aut, but, avt, bvt; q)_\infty}\\
&\times \sum_{j=0}^k {k \brack j}_q \frac{b^j a^{k-j}(aut, avt; q)_j}{(abuvt^2; q)_j}.
\end{align*}
\end{thm}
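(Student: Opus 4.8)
The plan is to follow the pattern of the proof of Theorem~\ref{aqmehler}: denote the right-hand side by $F(u,v)$, regarding $a,b,t$ as parameters, show that $F$ is analytic near $(0,0)\in\mathbb{C}^2$ and satisfies the $q$-partial differential equation $\partial_{q,u}\{F\}=\partial_{q,v}\{F\}$, and then apply part (i) of Proposition~\ref{qppmotivation} to expand $F=\sum_{n=0}^\infty \alpha_n h_n(u,v|q)$. The coefficients $\alpha_n$ are then read off by putting $v=0$ and using $h_n(u,0|q)=u^n$. Analyticity is immediate, since $F$ is a finite sum of ratios whose denominators $(aut,but,avt,bvt;q)_\infty$ and $(abuvt^2;q)_j$ are nonvanishing at the origin.

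First I would isolate the prefactor $P(u,v)=(abuvt^2;q)_\infty/(aut,but,avt,bvt;q)_\infty$. This is precisely the right-hand side of Theorem~\ref{aqmehler} after the relabeling $x\mapsto u$, $y\mapsto v$, $u\mapsto a$, $v\mapsto b$, so that computation already yields the eigen-relation $\partial_{q,u}\{P\}=\partial_{q,v}\{P\}=\lambda P$ with $\lambda=(t(a+b)-abt^2(u+v))/(1-abuvt^2)$, and in particular $P(qu,v)/P(u,v)=(1-aut)(1-but)/(1-abuvt^2)$. Writing $F=PS$ with $S(u,v)=\sum_{j=0}^k {k\brack j}_q b^ja^{k-j}(aut,avt;q)_j/(abuvt^2;q)_j$ and applying the $q$-product rule, the desired equation $\partial_{q,u}\{F\}=\partial_{q,v}\{F\}$ reduces to the single relation $\partial_{q,u}\{S\}-\partial_{q,v}\{S\}+\lambda\,(S(qu,v)-S(u,qv))=0$. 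Verifying this term by term—differentiating the finite sum $S$ via $\partial_{q,u}\{(aut;q)_j\}=-at(1-q^j)(aqut;q)_{j-1}$ together with the $u$-dependence of $1/(abuvt^2;q)_j$—is the main obstacle: the Gaussian-binomial identity used in the proof of Proposition~\ref{gefunpp} must be invoked to telescope the $j$-sum and cancel the contribution of $\lambda$.

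Finally I would set $v=0$. The prefactor collapses to $1/(aut,but;q)_\infty$ and only the factor $(aut;q)_j$ survives in $S$, so $F(u,0)=(aut,but;q)_\infty^{-1}\sum_{j=0}^k {k\brack j}_q b^ja^{k-j}(aut;q)_j$. It then remains to prove the auxiliary identity $\sum_{n=0}^\infty h_{n+k}(a,b|q)(ut)^n/(q;q)_n=(aut,but;q)_\infty^{-1}\sum_{j=0}^k {k\brack j}_q b^ja^{k-j}(aut;q)_j$, which identifies $\alpha_n=h_{n+k}(a,b|q)t^n/(q;q)_n$ and matches the left-hand side of the theorem. I would establish this by writing $z=ut$ and observing, from (\ref{rseqn1}) and the formula $\partial_{q,z}^k\{z^m\}=(q;q)_mz^{m-k}/(q;q)_{m-k}$, that its left-hand side equals $\partial_{q,z}^k\{1/(az,bz;q)_\infty\}$; the closed form then follows from the $q$-Leibniz rule applied to the product $1/(az;q)_\infty$ and $1/(bz;q)_\infty$, using $\partial_{q,z}^j\{1/(az;q)_\infty\}=a^j/(az;q)_\infty$ from (\ref{qd:eqn1}) and a final reindexing $j\mapsto k-j$. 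This completes the identification of $\alpha_n$ and hence the proof.
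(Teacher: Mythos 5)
Your overall strategy is the same as the paper's: expand the right-hand side in $h_n(u,v|q)$ via Proposition~\ref{qppmotivation}(i), set $v=0$, and identify the coefficients through the identity
\begin{equation*}
\sum_{n=0}^\infty h_{n+k}(a,b|q)\frac{(ut)^n}{(q;q)_n}
=\frac{1}{(aut,but;q)_\infty}\sum_{j=0}^k {k\brack j}_q b^j a^{k-j}(aut;q)_j,
\end{equation*}
which is exactly the paper's (\ref{careqn1})--(\ref{careqn2}). Your derivation of this auxiliary identity ($k$-fold $q$-differentiation of (\ref{rseqn1}) plus the $q$-Leibniz rule) is sound, provided you use the Leibniz rule in its correct form
$\mathcal{D}^k_{q,z}\{fg\}(z)=\sum_j {k\brack j}_q \mathcal{D}^j_{q,z}\{f\}(z)\,\mathcal{D}^{k-j}_{q,z}\{g\}(q^jz)$;
it is precisely the shifted argument $q^jz$ that produces the factor $(aut;q)_j$, which a naive product of the two formulas from (\ref{qd:eqn1}) would miss.

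The genuine gap is the step you yourself call ``the main obstacle'': you never verify $\partial_{q,u}\{F\}=\partial_{q,v}\{F\}$, but only reduce it (correctly) to the unproved relation $\partial_{q,u}\{S\}-\partial_{q,v}\{S\}+\lambda\bigl(S(qu,v)-S(u,qv)\bigr)=0$, and the mechanism you predict --- telescoping the $j$-sum via the Gaussian-binomial recurrence --- is not what actually happens. The key observation, which is what the paper's ``direct computation'' amounts to, is to absorb the ratio $(aut,avt;q)_j/(abuvt^2;q)_j$ into the infinite products: the $j$-th summand of the right-hand side equals
\begin{equation*}
{k\brack j}_q b^j a^{k-j}\,
\frac{(abuvt^2q^j;q)_\infty}{(autq^j,\,but,\,avtq^j,\,bvt;q)_\infty},
\end{equation*}
which is, up to a constant, the $q$-Mehler kernel of Theorem~\ref{aqmehler} with $a$ replaced by $aq^j$. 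The computation already carried out in the proof of Theorem~\ref{aqmehler} therefore gives, for each $j$ separately,
\begin{equation*}
\partial_{q,u}\{F_j\}=\partial_{q,v}\{F_j\}
=\frac{atq^j+bt-ab(u+v)t^2q^j}{1-abuvt^2q^j}\,F_j,
\end{equation*}
where $F_j$ denotes that summand; summing over $j$ yields the $q$-PDE for $F$ with no product rule and no cross-$j$ cancellation at all. Equivalently, in your notation the relation $\partial_{q,u}\{T_j\}-\partial_{q,v}\{T_j\}+\lambda\bigl(T_j(qu,v)-T_j(u,qv)\bigr)=0$ holds term by term, so there is nothing to telescope. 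As submitted, the proposal is not yet a proof; with this repair of the middle step it becomes precisely the paper's argument.
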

\begin{proof} Differentiating $k$ times the generating function for $h_n(a, b|q)$ with respect to $t$, we have that
\begin{equation}
\sum_{n=0}^\infty h_{n+k}(a, b|q)  \frac{t^n}{(q; q)_n}
=\frac{1}{(at, bt; q)_\infty}
\sum_{j=0}^k {k \brack j}_q b^j a^{k-j}(at; q)_j.
\label{careqn1}
\end{equation}
If $t$ is replaced by $tu,$ we arrive at
\begin{equation}
\sum_{n=0}^\infty h_{n+k}(a, b|q)  \frac{u^nt^n}{(q; q)_n}
=\frac{1}{(aut, but; q)_\infty}
\sum_{j=0}^k {k \brack j}_q b^j a^{k-j}(aut; q)_j.
\label{careqn2}
\end{equation}
Denote the right hand side of the equation in Theorem \ref{Carlitzthm} by $f(u, v).$ Then by a direct computation,
we find that
\begin{align*}
&\partial_{q, u}\{f(u, v)\}=\partial_{q, v}\{f(u, v)\}\\
&=\frac{(abuvt^2; q)_\infty}{(aut, but, avt, bvt; q)_\infty}
\sum_{j=0}^k {k \brack j}_q \frac{b^j a^{k-j}(aut, avt; q)_j}{(abuvt^2; q)_j}
\(\frac{atq^j+bt-ab(u+v)t^2q^j}{1-abuvt^2q^j}\).
\end{align*}
Thus,  by (i) in Proposition~\ref{qppmotivation}, there exists a sequence $\{\alpha_n\}$ independent of $u$ and $v$
such that
\begin{equation}
f(u, v)=\sum_{n=0}^\infty
\alpha_n h_n(u, v|q).
\label{careqn6}
\end{equation}
Taking $v=0$ in the above equation and noting the definition of $f(u, v),$ we find that
\[
\frac{1}{(aut, but; q)_\infty}
\sum_{j=0}^k {k \brack j}_q b^j a^{k-j}(aut; q)_j
=\sum_{n=0}^\infty \alpha_n u^n.
\]
Comparing this equation with (\ref{careqn2}), we find that $\alpha_n=t^n h_{n+k}(a, b|q)/(q; q)_n.$
Thus we complete the proof of the theorem.
\end{proof}
\section{An extension of Rogers's summation }
The Rogers summation formula \cite[p. 44]{Gas+Rah} is one of the most important results for $q$-series,
which can be stated in the following proposition.
\begin{prop} \label{rogerspp}
For $|\alpha abc/q^2|<1,$ we have the summation
\begin{align*}
&{_6 \phi_5} \left({{\alpha, q\sqrt{\alpha}, -q\sqrt{\alpha}, q/a, q/b, q/c}
\atop{\sqrt{\alpha}, -\sqrt{\alpha},\alpha a, \alpha b, \alpha c}}; q, \frac{\alpha abc}{q^2}\right)\\
 &=\frac{(\alpha q, \alpha ab/q, \alpha ac/q, \alpha bc/q; q)_\infty}
{(\alpha a, \alpha b, \alpha c, \alpha abc/q^2; q)_\infty}. \nonumber
\end{align*}
\end{prop}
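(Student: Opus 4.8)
The plan is to avoid summing the very-well-poised series directly and instead to realize Proposition~\ref{rogerspp} as a specialization of a two-variable expansion produced by Proposition~\ref{qppmotivation}, in the spirit of the proofs of the $q$-Mehler formulas (Theorems~\ref{aqmehler} and \ref{bqmehler}) and of Carlitz's extension (Theorem~\ref{Carlitzthm}). Concretely, I would introduce a pair of auxiliary variables $x,y$ and look for a function $f(x,y)$, analytic at $(0,0)$, that satisfies $\partial_{q,x}\{f\}=\partial_{q,y}\{f\}$, so that by the expansion theorem $f(x,y)=\sum_{n\ge0}\alpha_n h_n(x,y|q)$, and that collapses, under a suitable specialization of $x,y$, onto the ${}_6\phi_5$ series on one side and onto the infinite product on the other. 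The role of Proposition~\ref{qppmotivation} is to convert the verification of a $q$-difference relation into the determination of a single sequence $\{\alpha_n\}$.

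The mechanical steps would then follow the established template. First, verify the defining $q$-partial differential equation using $(z;q)_\infty=(1-z)(qz;q)_\infty$, which reduces $\partial_{q,x}\{f\}/f$ and $\partial_{q,y}\{f\}/f$ to rational prefactors that must be shown to coincide, exactly the computation appearing in Theorem~\ref{aqmehler}. Second, invoke Proposition~\ref{qppmotivation}(i) to get $f=\sum_n\alpha_n h_n(x,y|q)$. Third, set $y=0$ and use $h_n(x,0|q)=x^n$ to reduce the identification of $\alpha_n$ to a single-variable power series, whose coefficients should be summable in closed form by the $q$-binomial theorem, or more to the point by a nonterminating $q$-Gauss ${}_2\phi_1$ evaluation, together with the generating function \reff{rseqn1}. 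Finally, substitute the $\alpha_n$ back and specialize $x,y$ so that $\sum_n\alpha_n h_n(x,y|q)$ becomes the ${}_6\phi_5$ sum while $f$ becomes the product on the right-hand side, completing the proof.

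The genuine difficulty is the first step, finding the correct lift. The obvious candidate, taking $f(x,y)$ to be the product on the right-hand side viewed as a function of two of the parameters $a,b,c$, fails: a direct computation gives
\[
\partial_{q, x}\{f\}=\frac{\alpha(1-y/q)(1-c/q)}{(1-\alpha xy/q)(1-\alpha xc/q)}f, \qquad \partial_{q, y}\{f\}=\frac{\alpha(1-x/q)(1-c/q)}{(1-\alpha xy/q)(1-\alpha yc/q)}f,
\]
and these are unequal, so the product side is \emph{not} itself a solution of the $q$-partial differential equation. Hence, unlike the Mehler case, the ${}_6\phi_5$ must emerge as the sequence of expansion coefficients rather than as $f$ itself, and the lift must be engineered so that its coefficients reproduce the very-well-poised factor $(1-\alpha q^{2n})/(1-\alpha)$, which equals $(q^2\alpha;q^2)_n/(\alpha;q^2)_n$ and therefore mixes the bases $q$ and $q^2$. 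Producing this base-doubling is where the real work lies; since it does not arise from any single evaluation of $h_n$, the realistic route is probably to prove first the general $q$-transformation formula announced in the abstract and then to read off Proposition~\ref{rogerspp} as a corollary. The surrounding algebra, namely the $q$-Gauss evaluation of the $y=0$ slice and the final specialization, is routine by comparison.
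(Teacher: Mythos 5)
The first thing to note is that the paper contains no proof of Proposition~\ref{rogerspp} at all: it is quoted as a classical result from Gasper and Rahman, and it is then \emph{used} as an ingredient (together with Watson's $q$-analog of Whipple's theorem and Tannery's theorem) in the paper's proof of its extension, Theorem~\ref{rogersliuthm}, where it supplies the asymptotics that guarantee analyticity of the left-hand side in $(\beta,\gamma)$. So within the paper's architecture Rogers's summation is an input to the $q$-PDE machinery, not an output of it, and there is no proof here for your proposal to match. Judged on its own terms, your proposal has a genuine gap: everything hinges on ``finding the correct lift'' $f(x,y)$, and you never find it. Your negative computation is correct and is the one piece of real mathematical content --- the product side, viewed as a function of two of the parameters, indeed fails to satisfy $\partial_{q,x}\{f\}=\partial_{q,y}\{f\}$, with exactly the two unequal rational prefactors you display --- but after that observation what remains is a plan, not a proof. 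The deferred step (``prove the general transformation formula first and read off the proposition as a corollary'') is precisely where all the work lies, and it is not carried out.

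Two remarks on that deferred route, since its feasibility is what your proposal ultimately rests on. First, the most tempting version is circular in this paper: Proposition~\ref{rogerspp} does follow from Theorem~\ref{rogersliuthm} by setting $\beta=1$ (then $(\beta;q)_k=(1;q)_k$ kills every term with $k\ge 1$, the inner ${}_4\phi_3$ collapses to $1$, and after dividing by $1-\alpha$ the identity becomes exactly Rogers's summation), but the paper's proof of Theorem~\ref{rogersliuthm} already invokes Rogers's summation, so this derivation is inadmissible as a proof of it. Second, a non-circular completion does exist along the lines you gesture at: Theorem~\ref{newliuthma}, whose proof is independent of Rogers (it rests on Theorem~\ref{newliuthm}, imported from \cite{Liu2013}, plus Proposition~\ref{qppmotivation}), yields Watson's $q$-analog of Whipple's theorem, Proposition~\ref{wwpp}; setting $d=q/\alpha c$ there makes the upper parameter $\alpha cd/q$ equal to $1$, so the ${}_4\phi_3$ reduces to $1$ while the ${}_8\phi_7$ telescopes (the pairs $\alpha c$ and $q/c$ cancel between numerator and denominator) to the terminating evaluation ${}_6\phi_5\left({{\alpha,\, q\sqrt{\alpha},\, -q\sqrt{\alpha},\, q^{-m},\, q/a,\, q/b}\atop{\sqrt{\alpha},\, -\sqrt{\alpha},\, \alpha q^{m+1},\, \alpha a,\, \alpha b}};q,\alpha abq^{m-1}\right)=\frac{(\alpha q,\alpha ab/q;q)_m}{(\alpha a,\alpha b;q)_m}$; this is Proposition~\ref{rogerspp} specialized at $c=q^{m+1}$, and since both sides of Proposition~\ref{rogerspp} are analytic in $c$ near the origin and agree on the sequence $c=q^{m+1}\to 0$, the identity theorem gives the full statement. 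Until steps of this kind are actually written out, the proposal cannot be counted as a proof.
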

In \cite{Liu2011}, we give the following extension of the Rogers $_6\phi_5$ summation formula by
using the operator method.
\begin{thm}\label{rogersliuthm}
For $\max\{|\alpha \beta abc/q^2|, |\alpha \gamma abc/q^2|\}<1$,  we have the identity
\begin{align*}
&\sum_{n=0}^\infty \frac{(1-\alpha q^{2n})(\alpha, q/a, q/b, q/c; q)_n}{(q, \alpha a, \alpha b, \alpha c; q)_n}
\(\frac{\alpha abc}{q^2}\)^n  {_4 \phi_3} \left({{q^{-n}, \alpha q^n, \beta, \gamma}
\atop{q/a, q/b,\alpha \beta \gamma ab/q}}; q, q\right)\\
&=\frac{(\alpha, \alpha ac/q, \alpha bc/q, \alpha \beta ab/q, \alpha \gamma ab/q, \alpha \beta \gamma abc/q^2; q)_\infty}
{(\alpha a, \alpha b, \alpha c, \alpha \beta abc/q^2, \alpha \gamma abc/q^2, \alpha \beta \gamma ab/q; q )_\infty}.
\end{align*}
\end{thm}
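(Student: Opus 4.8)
The plan is to run the paper's own machinery with $\beta$ and $\gamma$ as the two analytic variables and $\alpha,a,b,c$ as fixed parameters. Write $f(\beta,\gamma)$ for the right-hand side of Theorem~\ref{rogersliuthm}. Each of its six infinite products is entire in $\beta$ and in $\gamma$ separately, and under $\max\{|\alpha\beta abc/q^2|,|\alpha\gamma abc/q^2|\}<1$ the denominator is nonvanishing near the origin, so by Hartog's theorem (Theorem~\ref{hartogthm}) $f$ is analytic at $(0,0)\in\mathbb{C}^2$. The first task is to verify the $q$-partial differential equation $\partial_{q,\beta}\{f\}=\partial_{q,\gamma}\{f\}$. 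Exactly as in the proof of Theorem~\ref{aqmehler}, I would compute both $q$-derivatives by repeatedly using $(z;q)_\infty=(1-z)(qz;q)_\infty$; the linear factors $(\alpha\beta ab/q;q)_\infty$, $(\alpha\gamma ab/q;q)_\infty$ in the numerator and $(\alpha\beta abc/q^2;q)_\infty$, $(\alpha\gamma abc/q^2;q)_\infty$ in the denominator, together with the coupled factors $(\alpha\beta\gamma abc/q^2;q)_\infty$ and $(\alpha\beta\gamma ab/q;q)_\infty$, combine into a common rational prefactor that is symmetric in $\beta,\gamma$ — this is the delicate but purely mechanical part. Granting the equation, Proposition~\ref{qppmotivation}(i) produces a sequence $\{\lambda_n\}$ independent of $\beta,\gamma$ with $f(\beta,\gamma)=\sum_{n=0}^\infty\lambda_n h_n(\beta,\gamma|q)$.

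Next I would pin down the $\lambda_n$ by setting $\gamma=0$ and using $h_n(\beta,0|q)=\beta^n$. Every factor containing $\gamma$ collapses to $1$, leaving $f(\beta,0)=\frac{(\alpha,\alpha ac/q,\alpha bc/q;q)_\infty}{(\alpha a,\alpha b,\alpha c;q)_\infty}\cdot\frac{(\alpha\beta ab/q;q)_\infty}{(\alpha\beta abc/q^2;q)_\infty}$. The last quotient has the shape $\frac{(t\beta;q)_\infty}{(s\beta;q)_\infty}$ with $t/s=q/c$, so the $q$-binomial theorem (equivalently the $t\mapsto$ limit of \reff{qd:eqn2}) expands it as $\sum_n\frac{(q/c;q)_n}{(q;q)_n}(\alpha abc/q^2)^n\beta^n$. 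Matching powers of $\beta$ gives $\lambda_n=\frac{(\alpha,\alpha ac/q,\alpha bc/q;q)_\infty}{(\alpha a,\alpha b,\alpha c;q)_\infty}\frac{(q/c;q)_n}{(q;q)_n}(\alpha abc/q^2)^n$, hence the compact closed form $f(\beta,\gamma)=\frac{(\alpha,\alpha ac/q,\alpha bc/q;q)_\infty}{(\alpha a,\alpha b,\alpha c;q)_\infty}\sum_{n=0}^\infty\frac{(q/c;q)_n}{(q;q)_n}(\alpha abc/q^2)^n h_n(\beta,\gamma|q)$. Thus the expansion theorem reduces the whole problem to identifying this single sum with the stated double sum.

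It is this identification where the real work lies, and I expect it to be the main obstacle. The trouble is that the inner ${}_4\phi_3$ does \emph{not} solve $\partial_{q,\beta}=\partial_{q,\gamma}$ term by term: its $\beta,\gamma$-block $\frac{(\beta;q)_j(\gamma;q)_j}{(\alpha\beta\gamma ab/q;q)_j}$ fails the equation unless $\beta=\gamma$, so the left-hand side is not visibly an $h_n$-expansion and one cannot simply read off $h_n$-coefficients. My plan is to interchange the order of the outer $n$-summation and the internal $j$-summation of the ${}_4\phi_3$; the resulting inner sum over $n$ becomes very-well-poised and is evaluated in closed form by Rogers' formula, Proposition~\ref{rogerspp}, after which a further summation of $q$-Gauss/$q$-binomial type should collapse the surviving $j$-series directly to the product $f(\beta,\gamma)$. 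A cleaner-looking but logically equivalent variant is to check only the boundary $\gamma=0$, where the left-hand side reduces via Proposition~\ref{rogerspp} to $\sum_n\lambda_n\beta^n=f(\beta,0)$, and then appeal to the uniqueness half of Proposition~\ref{qppmotivation}; this variant, however, still requires first establishing that the left-hand side satisfies the $q$-partial differential equation, which (because of the term-by-term failure just noted) demands a nontrivial cancellation across the $n$-sum. In short, Theorem~\ref{mainthmliu}/Proposition~\ref{qppmotivation} does the structural work of supplying the target closed form, but it is Rogers' $_6\phi_5$ summation that must be invoked to match that form to the given series, and carrying out that evaluation is the crux.
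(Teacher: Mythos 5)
Your outline reproduces the paper's skeleton (expand via Proposition~\ref{qppmotivation}, read off coefficients at $\gamma=0$, reassemble), but it collapses at its first substantive step: the right-hand side of Theorem~\ref{rogersliuthm} does \emph{not} satisfy the $q$-partial differential equation $\partial_{q,\beta}\{f\}=\partial_{q,\gamma}\{f\}$, so Proposition~\ref{qppmotivation}(i) cannot be applied to it, and it admits no expansion in $h_n(\beta,\gamma|q)$ at all. Indeed, put $u=\alpha ab/q$ and $w=\alpha abc/q^2$, so that the $(\beta,\gamma)$-dependent part of the right-hand side is
\[
g(\beta,\gamma)=\frac{(\beta u,\gamma u,\beta\gamma w;q)_\infty}{(\beta w,\gamma w,\beta\gamma u;q)_\infty}.
\]
The computation you call ``purely mechanical'' gives
\[
\partial_{q,\beta}\{g\}=\frac{(w-u)(1-\gamma)}{(1-\beta u)(1-\beta\gamma w)}\,g,
\qquad
\partial_{q,\gamma}\{g\}=\frac{(w-u)(1-\beta)}{(1-\gamma u)(1-\beta\gamma w)}\,g,
\]
and these are unequal: symmetry in $\beta,\gamma$ is necessary for the equation but not sufficient. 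In particular your claimed closed form, the right-hand side equals a constant times $\sum_{n}\frac{(q/c;q)_n}{(q;q)_n}w^{n}h_n(\beta,\gamma|q)$, is false; if it held, then by (\ref{rseqn3}) (the converse half of Proposition~\ref{qppmotivation}) $g$ would have to satisfy the $q$-PDE. The idea you are missing is the paper's auxiliary factor
\[
f_0(\beta,\gamma)=\frac{(\alpha\beta\gamma ab/q;q)_\infty}{(\beta,\gamma,\alpha\beta ab/q,\alpha\gamma ab/q;q)_\infty}.
\]
Multiplying \emph{both} sides by $f_0$ repairs everything at once: $f_0\times(\text{product side})$ is a constant multiple of the $q$-Mehler kernel $(\beta\gamma w;q)_\infty/(\beta,\gamma,\beta w,\gamma w;q)_\infty$, which does satisfy the equation, while on the series side $f_0\cdot(\beta;q)_k(\gamma;q)_k/(\alpha\beta\gamma ab/q;q)_k=f_k(\beta,\gamma)$, and each $f_k$ satisfies $\partial_{q,\beta}\{f_k\}=\partial_{q,\gamma}\{f_k\}$, so the modified left-hand side obeys the $q$-PDE \emph{term by term} --- exactly the cancellation you correctly identified as necessary but could not produce.

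Your two fallback routes for matching the series to the product also fail. Option A's interchange of summation is not merely delicate, it is invalid: for fixed $k$ the interchanged inner series over $n$ is very-well-poised with effective argument $\alpha abcq^{-k}/q^{2}$, whose modulus exceeds $1$ as soon as $q^{k}<|\alpha abc/q^{2}|$, so these inner series diverge for all large $k$ and Proposition~\ref{rogerspp} cannot be invoked; the original double sum converges only conditionally (through cancellation inside the terminating ${}_4\phi_3$), the rearrangement is therefore not value-preserving, and the formally rearranged expression (a single nonterminating balanced ${}_3\phi_2$ times infinite products) can be checked to disagree with the theorem already at $\beta=\gamma=0$. In Option B, the boundary value at $\gamma=0$ is not given by Rogers' ${}_6\phi_5$ summation: what is needed is the nonterminating summation of Ismail, Rahman and Suslov for $\sum_n A_n\,{}_3\phi_2\left({{q^{-n},\,\alpha q^n,\,\beta}\atop{q/a,\,q/b}};q,q\right)$, which the paper imports from \cite{IsmailRS}; Rogers' formula enters only through that result, through the $q$-Mehler formula of Theorem~\ref{aqmehler}, and through the asymptotics (with Watson's theorem and Tannery's theorem) that establish analyticity of the series side near $(\beta,\gamma)=(0,0)$ --- a point your proposal also leaves untreated. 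With the $f_0$-twist in place, the $\gamma=0$ evaluation via \cite{IsmailRS} and reconstruction via Theorems~\ref{gefunthm} and~\ref{aqmehler} yield the paper's proof; without it, your argument does not go through.
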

Now we will use Proposition~\ref{qppmotivation} to give a simple proof of the above theorem.
\begin{proof} Using Watson's $q$-analogue of Whipple's theorem, the Rogers $_6\phi_5$ summation formula and
Tannery's theorem, we can obtain the  asymptotic formula (see \cite[p. 1406 ]{Liu2011} for the details)
for $n\to \infty$,
\[
{_4 \phi_3} \left({{q^{-n}, \alpha q^n, \beta, \gamma}
\atop{q/a, q/b,\alpha \beta \gamma abc/q}}; q, q\right) \sim
\frac{(\gamma, q/a\beta, q/b\beta, ab\alpha \gamma/q; q)_\infty \beta^n}
{(q/a, q/b, \gamma/\beta, ab\alpha \beta \gamma/q; q)_\infty}.
\]
Thus,  by the ratio test,  we know that the left-hand side of the equation in Theorem~\ref{rogersliuthm} converges to an analytic
function of $\beta$ for $|\alpha \beta abc/q^2|<1$.  It is obvious that the left-hand side of the equation in Theorem~\ref{rogersliuthm}
is symmetric in $\beta$ and $\gamma,$ so the left-hand side of the equation in Theorem~\ref{rogersliuthm} also converges to an analytic
function of $\gamma$ for $|\alpha \gamma abc/q^2|<1$. It follows that the left-hand side of the equation in Theorem~\ref{rogersliuthm}
is analytic near $(\beta, \gamma)=(0, 0).$

For simplicity, we temporarily introduce $A_n$ and $f_k(\alpha, \beta)$ as follows
\begin{align*}
&A_n:=\frac{(1-\alpha q^{2n})(\alpha, q/a, q/b, q/c; q)_n}{(q, \alpha a, \alpha b, \alpha c; q)_n}
\(\frac{\alpha abc}{q^2}\)^n, \\
&f_k(\beta, \gamma):=\frac{(\alpha \beta \gamma ab q^k/q; q)_\infty}{(\beta q^k, \gamma q^k, \alpha \beta ab/q, \alpha \gamma ab/q; q)_\infty}.
\end{align*}
Using $f_0(\beta, \gamma)$ to multiply the left-hand side of the equation in Theorem~\ref{rogersliuthm}, we find that
\begin{align*}
f_0(\beta, \gamma) \sum_{n=0}^\infty  {_4 \phi_3} \left({{q^{-n}, \alpha q^n, \beta, \gamma}
\atop{q/a, q/b,\alpha \beta \gamma abc/q}}; q, q\right)A_n\\
=\sum_{n=0}^\infty A_n \sum_{k=0}^n \frac{(q^{-n}, \alpha q^n; q)_k q^k}{(q, q/a, q/b; q)_k}f_k(\beta, \gamma).
\end{align*}
It is obvious that $f_0(\beta, \gamma)$ is analytic near $(\beta, \gamma)=(0, 0)$. Thus, the right-hand side of the
above equation is also analytic near $(\beta, \gamma)=(0, 0)$. By a direct computation, we easily find  that
\[
\partial_{q, \beta}\{f_k(\beta, \gamma)\}=\partial_{q, \gamma}\{f_k(\beta, \gamma)\}
=f_k(\beta, \gamma)\left(\frac{\alpha ab+q^{k+1}-(\beta+\gamma)\alpha abq^k}{q-\alpha \beta \gamma abq^k}\right).
\]
Thus,  there exists a sequence $\{B_n\}$ independent of $\beta$ and $\gamma$ such that
\begin{equation}
f_0(\beta, \gamma) \sum_{n=0}^\infty  {_4 \phi_3} \left({{q^{-n}, \alpha q^n, \beta, \gamma}
\atop{q/a, q/b,\alpha \beta \gamma abc/q}}; q, q\right)A_n
=\sum_{n=0}^\infty B_n h_n(\beta, \gamma|q).\label{rleqn1}
\end{equation}
Setting $\gamma=0$ in the above equation and using $h_n(\beta, 0|q)=\beta^n$,  we obtain
\[
\frac{1}{(\beta, \alpha \beta ab/q; q)_\infty}\sum_{n=0}^\infty  {_3 \phi_2} \left({{q^{-n}, \alpha q^n, \beta}
\atop{q/a, q/b}}; q, q\right)A_n
=\sum_{n=0}^\infty B_n \beta^n.
\]
Ismail, Rahman and Suslov \cite[p. 559, Theorem~5.1]{IsmailRS}  have proved that
\begin{align*}
\sum_{n=0}^\infty  {_3 \phi_2} \left({{q^{-n}, \alpha q^n, \beta}
\atop{q/a, q/b}}; q, q\right)A_n
=\frac{(\alpha, \alpha ac/q, \alpha bc/q, \alpha \beta ab/q; q)_\infty}
{(\alpha a, \alpha b, \alpha c, \alpha \beta abc/q^2; q)_\infty}.
\end{align*}
Comparing the above two equations, we are led to the following identity:
\[
\sum_{n=0}^\infty B_n \beta^n=\frac{(\alpha, \alpha ac/q, \alpha bc/q; q)_\infty}
{(\alpha a, \alpha b, \alpha c, \beta, \alpha \beta abc/q^2; q)_\infty}.
\]
Using the generating function for $h_n$ in Theorem~\ref{gefunthm}, we immediately obtain
\[
\frac{1}{(\beta, \alpha \beta abc/q^2; q)_\infty}=\sum_{n=0}^\infty \frac{\beta^n}{(q; q)_n}
h_n(1, \alpha abc/q^2|q).
\]
Hence we have
\[
B_n=\frac{(\alpha, \alpha ac/q, \alpha bc/q; q)_\infty}
{(\alpha a, \alpha b, \alpha c; q)_\infty}\frac{h_n(1, \alpha abc/q^2|q)}{(q; q)_n}.
\]
Substituting the above equation into (\ref{rleqn1}), we find that the right-hand side of
(\ref{rleqn1}) becomes
\[
\frac{(\alpha, \alpha ac/q, \alpha bc/q; q)_\infty}
{(\alpha a, \alpha b, \alpha c; q)_\infty} \sum_{n=0}^\infty
\frac{1}{(q; q)_n}h_n(\beta, \gamma|q)h_n(1, \alpha abc/q^2|q).
\]
Using the $q$-Mehler formula for $h_n$ in Theorem~\ref{aqmehler}, we easily find that
\[
\sum_{n=0}^\infty
\frac{1}{(q; q)_n}h_n(\beta, \gamma|q)h_n(1, \alpha abc/q^2|q)=\frac{(\alpha\beta\gamma abc/q^2; q)_\infty}
{(\beta, \gamma, \alpha\beta abc/q^2, \alpha\gamma abc/q^2; q)_\infty}.
\]
Combining the above two equations, we find that the right-hand side of (\ref{rleqn1}) equals
\[
\frac{(\alpha, \alpha ac/q, \alpha bc/q, \alpha\beta\gamma abc/q^2; q)_\infty}
{(\alpha a, \alpha b, \alpha c, \beta, \gamma, \alpha\beta abc/q^2, \alpha\gamma abc/q^2; q)_\infty}.
\]
It follows that
\begin{align*}
f_0(\beta, \gamma) \sum_{n=0}^\infty  {_4 \phi_3} \left({{q^{-n}, \alpha q^n, \beta, \gamma}
\atop{q/a, q/b,\alpha \beta \gamma abc/q}}; q, q\right)A_n\\
=\frac{(\alpha, \alpha ac/q, \alpha bc/q, \alpha\beta\gamma abc/q^2; q)_\infty}
{(\alpha a, \alpha b, \alpha c, \beta, \gamma, \alpha\beta abc/q^2, \alpha\gamma abc/q^2; q)_\infty},
\end{align*}
which is equivalent to the equation in Theorem~\ref{rogersliuthm}. This completes the
proof of Theorem~\ref{rogersliuthm}.
\end{proof}
\section{An extension of the Andrews-Askey integral }
It is known that the Jackson $q$-integral can be defined as
\[
\int_{a}^b f(x)d_q x=(1-q)\sum_{n=0}^\infty [bf(bq^n)-af(aq^n)]q^n.
\]
Using Ramanujan's ${_1}\psi{_1}$ summation, Andrews and Askey \cite{Andrews+Askey}
proved the following $q$-integral  formula.
\begin{prop}\label{AAintegralpp}
If there are no zero factors in the denominator of the integral, then, we have
\[
\int_{u}^v \frac{(qx/u, qx/v; q)_\infty}{(ax, bx; q)_\infty}d_q x
=\frac{(1-q)v(q, u/v, qv/u, abuv; q)_\infty}{(au, bu, av, bv; q)_\infty}.
\]
\end{prop}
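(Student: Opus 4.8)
The plan is to treat the left-hand side as an analytic function of the two parameters $a$ and $b$ and to run the same machine used throughout this paper: verify a $q$-partial differential equation, invoke the expansion theorem, pin down the coefficients by a one-variable specialization, and resum via the $q$-Mehler formula. Write
\[
F(a, b)=\int_{u}^v \frac{(qx/u, qx/v; q)_\infty}{(ax, bx; q)_\infty}\,d_q x .
\]
Since the Jackson integral is the series $(1-q)\sum_{n\ge 0}[vf(vq^n)-uf(uq^n)]q^n$ with $f$ the integrand, and $f(uq^n), f(vq^n)\to 1$ while the terms decay like $q^n$, the series converges and defines a function analytic in $a$ alone and in $b$ alone (for parameters avoiding the zeros of the denominators); Hartog's theorem (Theorem~\ref{hartogthm}) then gives joint analyticity near $(a, b)=(0, 0)$.

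First I would establish the $q$-partial differential equation $\partial_{q, a}\{F\}=\partial_{q, b}\{F\}$. Because $a$ and $b$ appear neither in the limits nor in $d_q x$, the $q$-partial derivatives pass through the Jackson integral termwise, so it suffices to differentiate the integrand. Using the $n=1$ case of \reff{qd:eqn1} in the form $\partial_{q, a}\{1/(ax; q)_\infty\}=x/(ax; q)_\infty$ (and likewise in $b$), a direct computation gives
\[
\partial_{q, a}\left\{\frac{(qx/u, qx/v; q)_\infty}{(ax, bx; q)_\infty}\right\}
=\partial_{q, b}\left\{\frac{(qx/u, qx/v; q)_\infty}{(ax, bx; q)_\infty}\right\}
=\frac{x\,(qx/u, qx/v; q)_\infty}{(ax, bx; q)_\infty}.
\]
Integrating, $\partial_{q, a}\{F\}=\partial_{q, b}\{F\}=\int_u^v x\,f(x)\,d_q x$, so by part (i) of Proposition~\ref{qppmotivation} there is a sequence $\{\alpha_n\}$, independent of $a$ and $b$, with $F(a, b)=\sum_{n=0}^\infty \alpha_n h_n(a, b|q)$.

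Next I would determine the $\alpha_n$ by setting $b=0$ and using $h_n(a, 0|q)=a^n$, which reduces the problem to the single-variable evaluation
\[
F(a, 0)=\int_u^v \frac{(qx/u, qx/v; q)_\infty}{(ax; q)_\infty}\,d_q x
=\frac{(1-q)v\,(q, u/v, qv/u; q)_\infty}{(au, av; q)_\infty}.
\]
This one-variable $q$-beta integral is exactly the specialization that Ramanujan's ${}_1\psi_1$ summation delivers: expanding the Jackson integral into its two boundary series and applying the ${}_1\psi_1$ sum collapses them into the stated product. Expanding the right-hand side by the generating function \reff{rseqn1} for $h_n$ (with $t\to a$ and $(x, y)\to(u, v)$) gives $1/(au, av; q)_\infty=\sum_{n\ge 0}h_n(u, v|q)a^n/(q; q)_n$, whence comparison of the coefficients of $a^n$ yields $\alpha_n=(1-q)v\,(q, u/v, qv/u; q)_\infty\,h_n(u, v|q)/(q; q)_n$.

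Finally I would substitute these coefficients back and resum, obtaining
\[
F(a, b)=(1-q)v\,(q, u/v, qv/u; q)_\infty \sum_{n=0}^\infty \frac{h_n(a, b|q)\,h_n(u, v|q)}{(q; q)_n},
\]
so that the $q$-Mehler formula of Theorem~\ref{aqmehler} at $t=1$ evaluates the sum as $(abuv; q)_\infty/(au, av, bu, bv; q)_\infty$, producing precisely the claimed right-hand side; the identity then extends to all parameters with no vanishing denominator factors by analytic continuation. I expect the one genuine analytic obstacle to be the $b=0$ evaluation $F(a, 0)$ via Ramanujan's ${}_1\psi_1$ summation, together with the care needed to justify convergence of the Jackson-integral series and termwise $q$-differentiation; once these are in hand, the expansion theorem and the $q$-Mehler formula make the passage from one to two variables automatic.
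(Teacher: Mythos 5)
Your proposal is correct, but it takes a genuinely different route from the paper, which in fact offers no proof of Proposition~\ref{AAintegralpp} at all: the result is simply quoted from Andrews and Askey, whose argument applies Ramanujan's ${}_1\psi_1$ summation directly to the two-parameter integral. You instead reduce the two-parameter identity to its one-parameter specialization $b=0$ and lift it back with the paper's own machinery: the $q$-PDE characterization of Proposition~\ref{qppmotivation}, coefficient identification via $h_n(a,0|q)=a^n$ together with the generating function \reff{rseqn1}, resummation by the $q$-Mehler formula of Theorem~\ref{aqmehler} at $t=1$, and analytic continuation off the polydisc where the series converge. The individual steps check out: the Jackson integral is a locally uniformly convergent series in $(a,b)$ near the origin, the $q$-derivative passes through it by linearity, both $q$-partials of the integrand equal $x$ times the integrand (so $\partial_{q,a}\{F\}=\partial_{q,b}\{F\}$), and the Mehler sum reproduces $(abuv;q)_\infty/(au,av,bu,bv;q)_\infty$ exactly as needed. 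What your route buys is a weakening of the external input, from the full ${}_1\psi_1$ summation to only its $b=0$ specialization (a Cauchy-type bilateral sum), with the second parameter then coming for free from the expansion theorem; it also makes the proposition self-contained in the paper's own style---indeed your argument is precisely the template the paper uses for Theorem~\ref{ssvthm} and Theorem~\ref{aaliuthm}, which lift this same integral to three and four parameters, applied one level down. The one thing to flag is that your $b=0$ evaluation is asserted rather than carried out; it is a genuine consequence of ${}_1\psi_1$ (and of weaker classical summations), so nothing is circular, but as written that step remains a citation, so your proof trades the strength of the external input rather than eliminating it.
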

To extend the above $q$-integral,  we introduce $H_k(a, b, u, v)$ defined  by
\begin{equation}
H_k(a, b, u, v)=\sum_{r=0}^k {k \brack r}_q\frac{(au, av; q)_r}{(abuv; q)_r}b^ra^{k-r}.
\label{aaeqn1}
\end{equation}
We extend the Andrews and Askey integral to the following more general integral.
\begin{thm}\label{aaliuthm} If there are no zero factors in the denominator of the integral
and $\max\{|au|, |bu|, |cu|, |du|, |av|, |bv|, |cv|, |dv|\}<1,$ then,  we have
\begin{align*}
&\int_{u}^v \frac{(qx/u, qx/v; q)_\infty}{(ax, bx, cx, dx; q)_\infty}d_q x
=\frac{(1-q)v(q, u/v, qv/u, abuv, cduv; q)_\infty}{(au, bu, cu, du,  av, bv, cv, dv; q)_\infty}\\
&\qquad \qquad \qquad \qquad \times \sum_{k=0}^\infty \frac{q^{k(k-1)/2}(-uv)^k}{(q; q)_k}H_k(a, b, u, v)H_k(c, d, u, v).
\end{align*}
\end{thm}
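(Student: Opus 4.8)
The plan is to regard the left-hand side as an analytic function $F(a,b,c,d)$ of the four parameters $a,b,c,d$ (keeping $u,v$ fixed) and to apply the expansion theorem, Theorem~\ref{mainthmliu}, with $k=2$ and the two variable pairs $(a,b)$ and $(c,d)$. First I would verify analyticity: each factor $1/(ax;q)_\infty,\ldots,1/(dx;q)_\infty$ is analytic in its parameter on the region forced by $\max\{|au|,\ldots,|dv|\}<1$, and on that region the Jackson $q$-integral is an absolutely convergent series, so $F$ is separately analytic in each variable and hence jointly analytic by Hartog's theorem (Theorem~\ref{hartogthm}). Next, since the $q$-partial derivative in a parameter commutes with the $q$-integral in $x$, and since (\ref{qd:eqn1}) gives $\mathcal{D}_{q,a}\{1/(ax;q)_\infty\}=x/(ax;q)_\infty$, differentiating the integrand $K(x)$ shows $\partial_{q,a}\{F\}=\partial_{q,b}\{F\}=\int_u^v x\,K(x)\,d_qx$, and likewise $\partial_{q,c}\{F\}=\partial_{q,d}\{F\}$. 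Theorem~\ref{mainthmliu} then supplies coefficients $\gamma_{m,n}$, independent of $a,b,c,d$, with $F=\sum_{m,n}\gamma_{m,n}h_m(a,b|q)h_n(c,d|q)$.

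To pin down the $\gamma_{m,n}$ I would set $b=d=0$. Using $h_m(a,0|q)=a^m$ and $h_n(c,0|q)=c^n$, the double series collapses to $\sum_{m,n}\gamma_{m,n}a^mc^n$, while in the integral two denominator factors disappear, leaving exactly the original Andrews-Askey integral of Proposition~\ref{AAintegralpp} with parameters $(a,c)$, namely $\Phi(a,c):=\frac{(1-q)v(q,u/v,qv/u,acuv;q)_\infty}{(au,cu,av,cv;q)_\infty}$. Thus $\gamma_{m,n}$ is the Taylor coefficient of $a^mc^n$ in $\Phi(a,c)$, which I would extract by multiplying three known expansions: the Euler expansion $(acuv;q)_\infty=\sum_j \frac{(-1)^jq^{j(j-1)/2}(uv)^j}{(q;q)_j}a^jc^j$ together with two copies of the Rogers-Szeg\H{o} generating function (\ref{rseqn1}), $\frac{1}{(au,av;q)_\infty}=\sum_p \frac{h_p(u,v|q)}{(q;q)_p}a^p$ and $\frac{1}{(cu,cv;q)_\infty}=\sum_s \frac{h_s(u,v|q)}{(q;q)_s}c^s$.

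Substituting the resulting $\gamma_{m,n}$ back and reindexing by $m=j+M$, $n=j+N$ factorises $F$, up to the constant $(1-q)v(q,u/v,qv/u;q)_\infty$ and the weight $\frac{q^{j(j-1)/2}(-uv)^j}{(q;q)_j}$, into a product of two inner sums $\sum_{M}\frac{h_M(u,v|q)}{(q;q)_M}h_{M+j}(a,b|q)$ and $\sum_{N}\frac{h_N(u,v|q)}{(q;q)_N}h_{N+j}(c,d|q)$. Each of these is precisely Carlitz's bilinear sum of Theorem~\ref{Carlitzthm} specialised to $t=1$, which evaluates the first to $\frac{(abuv;q)_\infty}{(au,bu,av,bv;q)_\infty}H_j(a,b,u,v)$ and the second to $\frac{(cduv;q)_\infty}{(cu,du,cv,dv;q)_\infty}H_j(c,d,u,v)$, with $H_j$ as in (\ref{aaeqn1}). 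Collecting the pieces reproduces both the claimed prefactor and the sum $\sum_k\frac{q^{k(k-1)/2}(-uv)^k}{(q;q)_k}H_k(a,b,u,v)H_k(c,d,u,v)$. The one genuinely delicate point, which I expect to be the main obstacle, is the legitimacy of all these rearrangements: interchanging the $q$-integral with the defining series and regrouping the iterated infinite sums. I anticipate that the hypothesis $\max\{|au|,|bu|,\ldots,|dv|\}<1$ is exactly what guarantees absolute convergence throughout and thereby licenses every interchange.
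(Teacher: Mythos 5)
Your proposal is correct and follows essentially the same route as the paper: the same application of Theorem~\ref{mainthmliu} with $k=2$ to the pairs $(a,b)$ and $(c,d)$, the same specialization $b=d=0$ reducing to the Andrews--Askey integral, the same coefficient extraction via Euler's expansion of $(acuv;q)_\infty$ and two copies of the generating function (\ref{rseqn1}), and the same reindexing that produces the two inner sums evaluated by Carlitz's formula (Theorem~\ref{Carlitzthm}) at $t=1$. The convergence caveat you flag is real but is exactly what the hypothesis $\max\{|au|,\ldots,|dv|\}<1$ secures, as the paper implicitly assumes.
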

\begin{proof} We temporarily use $I(a, b, c,  d)$ to denote the $q$-integral in the left-hand side of the above equation.
Writing $I(a, b, c,  d)$ in the series form, we easily find that it is an analytic
function of $a, b, c, d$  for
\[
\max\{|au|, |bu|, |cu|, |du|, |av|, |bv|, |cv|, |dv|\}<1.
\]
It is easy to check that $I(a, b, c,  d)$ satisfies the two partial differential equations
\begin{align*}
\partial_{q, a}\{I\}=\partial_{q, b}\{I\}=\int_{u}^v \frac{x(qx/u, qx/v; q)_\infty}{(ax, bx, cx, dx; q)_\infty}d_q x,\\
\partial_{q, c}\{I\}=\partial_{q, d}\{I\}=\int_{u}^v \frac{x(qx/u, qx/v; q)_\infty}{(ax, bx, cx, dx; q)_\infty}d_q x.
\end{align*}
Thus, by (i) in Theorem~\ref{mainthmliu},  there exists a sequence $\{\alpha_{m, n}\}$ such that
\begin{equation}
\int_{u}^v \frac{(qx/u, qx/v; q)_\infty}{(ax, bx, cx, dx; q)_\infty}d_q x
=\sum_{m, n=0}^\infty \alpha_{m, n} h_m(a, b|q)h_n(c, d|q).
\label{aaeqn2}
\end{equation}
Setting $b=d=0$ in the above equation and noting that $h_m(a, 0|q)=a^m$ and $h_n(c, 0|q)=c^n,$
we obtain
\[
\int_{u}^v \frac{(qx/u, qx/v; q)_\infty}{(ax, cx; q)_\infty}d_q x
=\sum_{m, n=-\infty}^\infty \alpha_{m, n} a^mc^n.
\]
Using the Andrews-Askey integral,  we find that the above equation can be written as
\[
\sum_{m, n=0}^\infty \alpha_{m, n} a^mc^n=\frac{(1-q)v(q, u/v, qv/u, acuv; q)_\infty}{(au, cu, av, cv; q)_\infty}.
\]
Using the generating function for $h_n$ in Theorem~\ref{gefunthm}, we find that
the above equation can be written as
\begin{align*}
&\sum_{m, n=0}^\infty \alpha_{m, n} a^mc^n\\
&=(1-q)v(q, u/v, qv/u, acuv; q)_\infty
\sum_{m, n=0}^\infty \frac{h_m(u, v|q)h_n(u, v|q)a^m c^n}
{(q; q)_m(q; q)_n}.
\end{align*}
Equating the coefficients of $a^mc^n$ in the both sides of the above equation, we deduce that
\begin{align*}
\alpha_{m, n}&=(1-q)v(q, u/v, qv/u; q)_\infty\\
&\times \sum_{k \ge 0}\frac{q^{k(k-1)/2}(-uv)^k h_{m-k}(u, v|q)h_{n-k}(u, v|q)}{(q; q)_k (q; q)_{m-k} (q; q)_{n-k}}.
\end{align*}
Substituting the above  equation into (\ref{aaeqn2}) and simplifying, we conclude that
\begin{align}
&\int_{u}^v \frac{(qx/u, qx/v; q)_\infty}{(ax, bx, cx, dx; q)_\infty}d_q x
\label{aaeqn3}\\
&=(1-q)v(q, u/v, qv/u; q)_\infty
\sum_{k=0}^\infty \frac{q^{k(k-1)/2}(-uv)^k}{(q; q)_k}A_kB_k, \nonumber
\end{align}
where $A_k$ and $B_k$ are given by
\[
A_k=\sum_{m=0}^\infty \frac{h_{m+k}(a, b|q)h_m(u, v|q)}{(q; q)_m}  \ \text{and} \
B_k=\sum_{n=0}^\infty \frac{h_{n+k}(c, d|q)h_n(u, v|q)}{(q; q)_n}.
\]
Thus using the Carlitz formula in Theorem~\ref{Carlitzthm},  we immediately find that
\begin{align*}
A_k=\frac{(abuv; q)_\infty}{(au, bu, av, bv; q)_\infty}H_k(a, b, u, v) \\
B_k=\frac{(cduv; q)_\infty}{(cu, du, cv, dv; q)_\infty}H_k(c, d, u, v).
\end{align*}
Substituting the above two equations into (\ref{aaeqn3}), we complete the proof of the theorem.
\end{proof}
The following $q$-integral formula is equivalent to Sears's identity for
the sum of two nonterminating balanced ${}_3\phi_2$ series, which was first noticed
by Al-Salam and Verma \cite{SalamVerma}
\begin{thm}\label{ssvthm} If there are no zero factors in the denominator of the integral
and $\max\{|au|, |av|, |bu|, |bv|, |cu|, |cv|\}<1$, then, we have
 \[
 \int_{u}^v \frac{(qx/u, qx/v, abcuvx; q)_\infty}{(ax, bx, cx; q)_\infty}d_q x
 =\frac{(1-q)v(q, u/v, qv/u, abuv, acuv, bcuv; q)_\infty}{(au, bu, cu, av, bv, cv; q)_\infty}.
 \]
\end{thm}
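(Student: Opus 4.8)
The plan is to evaluate the Jackson $q$-integral by the expansion method of Section~2, after first removing the numerator factor $(abcuvx; q)_\infty$, which is exactly what blocks a direct application. Write $I=I(a,b,c)$ for the left-hand side. Expanding the integrand in its defining Jackson series shows that $I$ is analytic in each of $a,b,c$ separately on the polydisc $\max\{|au|,|bu|,|cu|,|av|,|bv|,|cv|\}<1$, so by Hartog's theorem (Theorem~\ref{hartogthm}) it is jointly analytic there. The integrand is symmetric in $a,b,c$, which tempts one to expand $I$ by Theorem~\ref{mainthmliu}; however this fails. Applying the higher $q$-derivative formula \reff{qd:eqn2} with $n=1$ in the variable $a$ to the ratio $(abcuvx;q)_\infty/(ax;q)_\infty$ gives
\[
\partial_{q,a}\{I\}=\int_{u}^{v}\frac{x(1-bcuv)}{1-abcuvx}\cdot\frac{(qx/u,qx/v,abcuvx;q)_\infty}{(ax,bx,cx;q)_\infty}\,d_qx,
\]
while $\partial_{q,b}\{I\}$ carries the factor $1-acuv$ in place of $1-bcuv$. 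Hence $\partial_{q,a}\{I\}\neq\partial_{q,b}\{I\}$ unless $a=b$, and Proposition~\ref{qppmotivation} cannot be applied to $I$ as it stands.

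To decouple the numerator I would expand it against one denominator factor by the $q$-binomial theorem,
\[
\frac{(abcuvx;q)_\infty}{(cx;q)_\infty}=\sum_{m=0}^{\infty}\frac{(abuv;q)_m}{(q;q)_m}(cx)^m,
\]
and interchange this sum with the Jackson integral, which is legitimate since the hypothesis $\max\{|au|,\ldots,|cv|\}<1$ forces absolute convergence. This reduces $I$ to two-denominator Andrews--Askey integrals weighted by a power of $x$:
\[
I=\sum_{m=0}^{\infty}\frac{(abuv;q)_m\,c^m}{(q;q)_m}\int_{u}^{v}\frac{x^m(qx/u,qx/v;q)_\infty}{(ax,bx;q)_\infty}\,d_qx.
\]
These inner integrals I would generate from the base Andrews--Askey evaluation (Proposition~\ref{AAintegralpp}) by $q$-differentiation. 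Since $x$ is held fixed under $\partial_{q,a}$ and the $b$-factor is constant, formula \reff{qd:eqn1} (with the roles of the variables interchanged) gives $\partial_{q,a}^m\bigl\{1/(ax,bx;q)_\infty\bigr\}=x^m/(ax,bx;q)_\infty$, whence
\[
\int_{u}^{v}\frac{x^m(qx/u,qx/v;q)_\infty}{(ax,bx;q)_\infty}\,d_qx=\partial_{q,a}^m\left\{\frac{(1-q)v(q,u/v,qv/u,abuv;q)_\infty}{(au,bu,av,bv;q)_\infty}\right\}.
\]
Writing the bracketed expression as a $b$-dependent constant times $(abuv;q)_\infty/(au,av;q)_\infty$ and evaluating $\partial_{q,a}^m$ of this one-numerator, two-denominator ratio by a Leibniz expansion built from \reff{qd:eqn2}, the integral $I$ becomes an explicit double series in $m$ and the Leibniz index.

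The main obstacle is the final resummation: this double series must collapse to the single product $(abuv,acuv,bcuv;q)_\infty/(au,bu,cu,av,bv,cv;q)_\infty$ on the right. This collapse is precisely Sears's transformation for the sum of two nonterminating balanced ${}_3\phi_2$ series (equivalently, the Al-Salam--Verma identity), so verifying it is the crux of the argument. To stay within the methods of this paper I would instead realize the collapse by the expansion technique itself: replace the product $abcuv$ by a free parameter $\lambda$ and set
\[
J(a,b,c;\lambda)=\int_{u}^{v}\frac{(qx/u,qx/v,\lambda x;q)_\infty}{(ax,bx,cx;q)_\infty}\,d_qx,
\]
so that now $\partial_{q,b}\{J\}=\partial_{q,c}\{J\}$ because $\lambda x$ is free of $b,c$. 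By Proposition~\ref{qppmotivation} one may expand $J$ in $h_n(b,c|q)$ and then peel off the denominator factors one at a time by setting $c=0$ and $b=0$, identifying the coefficients through the generating function of Theorem~\ref{gefunthm}, the Carlitz formula (Theorem~\ref{Carlitzthm}) and the $q$-Mehler formula (Theorem~\ref{aqmehler}), exactly in the pattern used to prove Theorem~\ref{aaliuthm}. Specializing $\lambda=abcuv$ only at the very end then produces the asserted product and completes the proof.
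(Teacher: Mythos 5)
Your diagnosis of the obstruction is correct, and it is exactly why the paper does not expand $I$ directly: with the numerator $(abcuvx;q)_\infty$ present, $\partial_{q,a}\{I\}$ carries $1-bcuv$ while $\partial_{q,b}\{I\}$ carries $1-acuv$, so Proposition~\ref{qppmotivation} cannot be applied to $I$ as it stands. But neither of your repairs closes the gap. The first route (expanding $(abcuvx;q)_\infty/(cx;q)_\infty$ by the $q$-binomial theorem and $q$-differentiating the Andrews--Askey formula) you abandon yourself, and rightly so: the final resummation is precisely the Al-Salam--Verma/Sears identity, i.e.\ the theorem being proved, so that route is circular. The second route, your actual proposal, fails at the step the expansion method cannot do without: identifying the coefficients from a boundary specialization. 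Once $abcuv$ is replaced by a free parameter $\lambda$, setting $c=0$ in $J(a,b,c;\lambda)$ leaves
\[
\int_u^v \frac{(qx/u,qx/v,\lambda x;q)_\infty}{(ax,bx;q)_\infty}\,d_qx,
\]
in which the factor $(\lambda x;q)_\infty$ survives. This is \emph{not} the Andrews--Askey integral, it is evaluated by no result in the paper, and generically it has no product form at all: the product evaluation is tied to the balanced choice $\lambda=abcuv$ together with the matching third denominator factor $(cx;q)_\infty$ --- which is the statement you are trying to prove. Hence the coefficients $\alpha_n$ in $J=\sum_n\alpha_n h_n(b,c|q)$ cannot be written in closed form, the pattern of Theorem~\ref{aaliuthm} (whose integrand has no numerator factor, so that $b=d=0$ genuinely reduces to Andrews--Askey) does not transfer, and specializing $\lambda=abcuv$ ``at the very end'' cannot create a collapse that the general-$\lambda$ expansion does not possess; you would be thrown back onto Sears's identity once more.

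The missing idea is a normalization, not a new parameter. The paper keeps the balanced numerator $(abcuvx;q)_\infty$ intact but divides the integral by $(abuv,bcuv;q)_\infty$, working with
\[
f(a,c)=\frac{1}{(abuv,bcuv;q)_\infty}\int_u^v\frac{(qx/u,qx/v,abcuvx;q)_\infty}{(ax,bx,cx;q)_\infty}\,d_qx
\]
as a function of $a$ and $c$, with $b$ held fixed. The prefactor exactly compensates the asymmetry you computed: a short calculation gives
\[
\partial_{q,a}\{f\}=\partial_{q,c}\{f\}
=\frac{1}{(abuv,bcuv;q)_\infty}\int_u^v\bigl(x+buv-(a+c)buvx\bigr)\frac{(qx/u,qx/v,qabcuvx;q)_\infty}{(ax,bx,cx;q)_\infty}\,d_qx,
\]
which is symmetric in $a$ and $c$, so Proposition~\ref{qppmotivation} applies to $f$. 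Crucially, at the boundary $c=0$ both the numerator factor and part of the normalization collapse to $1$, so the coefficient-generating series is evaluated by the Andrews--Askey integral, giving
\[
\alpha_n=\frac{(1-q)v(q,u/v,qv/u;q)_\infty\,h_n(u,v|q)}{(q;q)_n\,(bu,bv;q)_\infty},
\]
and the final resummation $\sum_{n\ge 0} h_n(u,v|q)h_n(a,c|q)/(q;q)_n$ is the $q$-Mehler formula (Theorem~\ref{aqmehler}) --- not Sears's identity --- after which multiplying back by $(abuv,bcuv;q)_\infty$ finishes the proof. This balanced-normalization step, which restores the $q$-partial differential equation while keeping every boundary value a known integral, is what your proposal lacks.
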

\begin{proof} Denote $f(a, c)$ as
\[
f(a, c)=\frac{1}{(abuv, bcuv; q)_\infty}\int_{u}^v \frac{(qx/u, qx/v, abcuvx; q)_\infty}{(ax, bx, cx; q)_\infty}d_q x.
\]
By a direct computation, we find that $\partial_{q, a}\{f(a, c)\}=\partial_{q, c}\{f(a, c)\}$ equals
\[
\frac{1}{(abuv, bcuv; q)_\infty}\int_{u}^v (x+buv-(a+c)buvx)\frac{(qx/u, qx/v, qabcuvx; q)_\infty}{(ax, bx, cx; q)_\infty}d_q x.
\]
Hence, by (i) in Proposition~\ref{qppmotivation}, there exists a sequence $\{\alpha_n\}$ independent of $a$ and $c$ such that
\begin{equation}
\frac{1}{(abuv, bcuv; q)_\infty}\int_{u}^v \frac{(qx/u, qx/v, abcuvx; q)_\infty}{(ax, bx, cx; q)_\infty}d_q x
=\sum_{n=0}^\infty \alpha_n h_n(a, c|q).
\label{aaeqn4}
\end{equation}
Putting $c=0$ in the above equation, using $h_n(a, 0|q)=a^n$ and the Andrews-Askey integral,  we find that
 \[
 \sum_{n=0}^\infty \alpha_n a^n=
 =\frac{(1-q)v(q, u/v, qv/u; q)_\infty }{(au, av, bu, bv; q)_\infty}.
 \]
 It follows that
 \[
 \alpha_n =\frac{(1-q)v(q, u/v, qv/u; q)_\infty h_n(u, v|q)}{(q; q)_n( bu, bv; q)_\infty}.
 \]
 Substituting the above equation into (\ref{aaeqn4}) and then using the $q$-Mehler formula for $h_n$, we find that
 \begin{align*}
 &\frac{1}{(abuv, bcuv; q)_\infty}\int_{u}^v \frac{(qx/u, qx/v, abcuvx; q)_\infty}{(ax, bx, cx; q)_\infty}d_q x\\
 &=\frac{(1-q)v(q, u/v, qv/u; q)_\infty }{( bu, bv; q)_\infty}\sum_{n=0}^\infty \frac{h_n(u, v|q) h_n(a, c|q)}{(q; q)_n}\\
 &=\frac{(1-q)v(q, u/v, qv/u, acuv; q)_\infty }{( au, av, bu, bv, cu, cv; q)_\infty}.
 \end{align*}
 Multiplying both sides of the above equation by $(abuv, bcuv; q)_\infty,$ we complete the proof of
 Theorem~\ref{ssvthm}.
\end{proof}
\section{Generalizations of Ramanujan's  reciprocity formula }
Ramanujan's reciprocity formula  (see, for example, \cite{BCYY}) can be stated as
in the following proposition.
\begin{prop} \label{ramppreciprocity} {\rm (Ramanujan's  reciprocity theorem )}.
\begin{align*}
&v\sum_{n=0}^\infty (-1)^n \frac{q^{n(n+1)/2}(u/v)^n}{(cv; q)_n}
-u\sum_{n=0}^\infty (-1)^n \frac{q^{n(n+1)/2}(v/u)^n}{(cu; q)_n}\label{rreqn1}\\
&\quad=\frac{(v-u)(q, qv/u, qu/v; q)_\infty}{(cu, cv; q)_\infty}.\nonumber
\end{align*}
\end{prop}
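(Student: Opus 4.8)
The plan is to \emph{deduce} Ramanujan's reciprocity from the Andrews--Askey $q$-integral of Proposition~\ref{AAintegralpp}, which has already been established. The key observation is that the right-hand side of Proposition~\ref{ramppreciprocity} is, up to the factor $1-q$, exactly the Andrews--Askey integral in the degenerate case $b=0$. Indeed, setting $a=c$ and $b=0$ in Proposition~\ref{AAintegralpp} (so that $(bx;q)_\infty=1$ and $(abuv;q)_\infty=1$) gives
\[
\int_{u}^v \frac{(qx/u, qx/v; q)_\infty}{(cx; q)_\infty}\,d_q x
=\frac{(1-q)v(q, u/v, qv/u; q)_\infty}{(cu, cv; q)_\infty},
\]
and using $(u/v;q)_\infty=(1-u/v)(qu/v;q)_\infty$ with $1-u/v=(v-u)/v$ rewrites the right-hand side as $(1-q)$ times the product on the right of Proposition~\ref{ramppreciprocity}. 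Thus it suffices to show that this same $q$-integral, evaluated \emph{directly} from the definition of the Jackson $q$-integral, reproduces the left-hand side of the reciprocity formula.

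Next I would expand the integral from its definition
\[
\int_{u}^v F(x)\,d_q x=(1-q)\sum_{n=0}^\infty\bigl[vF(vq^n)-uF(uq^n)\bigr]q^n,
\qquad F(x)=\frac{(qx/u, qx/v; q)_\infty}{(cx; q)_\infty}.
\]
Because $F$ is symmetric in $u$ and $v$, the two endpoint contributions are interchanged by $u\leftrightarrow v$, which already matches the antisymmetric shape $v(\cdots)-u(\cdots)$ of the reciprocity formula. Evaluating $F(vq^n)$ by means of $(q^{n+1};q)_\infty=(q;q)_\infty/(q;q)_n$ and $(cvq^n;q)_\infty=(cv;q)_\infty/(cv;q)_n$ collapses the term from the upper limit into a single basic hypergeometric ${}_2\phi_1$-type sum, and likewise for the term from the lower limit. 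It then remains to identify each endpoint sum with the corresponding partial-theta series $\sum_{n}(-1)^n q^{n(n+1)/2}(u/v)^n/(cv;q)_n$ appearing on the left of Proposition~\ref{ramppreciprocity}.

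The hard part will be precisely this last identification. The endpoint sum is an ordinary convergent ${}_2\phi_1$, whereas the target series carries the Gaussian weight $q^{n(n+1)/2}$ that is the signature of a (partial) theta function; consequently no single meromorphic integrand has the target terms as its grid values, and the passage between the two representations is a genuine transformation of Heine/Rogers--Fine type rather than a formal rearrangement. I would therefore isolate this transformation as the key lemma and prove it by a Heine-type argument, taking care to justify the term-by-term manipulations throughout, which is harmless here because the rapid decay of $q^{n(n+1)/2}$ guarantees absolute convergence.

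Finally, as a cross-check and an alternative route, I would observe that both sides satisfy the same first-order $q$-difference equation in $c$, namely $\Phi(qc)=(1-cu)(1-cv)\Phi(c)$, which determines the answer once the value at $c=0$ is known; however, the base case at $c=0$ is itself the nontrivial partial-theta reciprocity $v\sum_n(-1)^nq^{n(n+1)/2}(u/v)^n-u\sum_n(-1)^nq^{n(n+1)/2}(v/u)^n=(v-u)(q,qv/u,qu/v;q)_\infty$, so this route merely relocates the difficulty. It is also worth recording why the Rogers--Szeg\H{o} expansion of Proposition~\ref{qppmotivation} cannot be applied naively in the variables $u,v$: the factor $(qv/u,qu/v;q)_\infty$ is not analytic at $(u,v)=(0,0)$, so the expansion engine of this paper enters only \emph{indirectly}, through the Andrews--Askey evaluation used in the first step.
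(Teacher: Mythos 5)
Your route is genuinely different from the paper's, because the paper contains no independent derivation of Proposition~\ref{ramppreciprocity} at all: it simply records the result as the special case $b=d=0$ of Proposition~\ref{liuppreciprocity}, which is quoted from \cite{Liu2003}. Your opening moves are correct: putting $b=0$ and $a=c$ in Proposition~\ref{AAintegralpp} and writing $v(u/v;q)_\infty=(v-u)(qu/v;q)_\infty$ does produce $(1-q)$ times the product side, and Jackson's definition turns the upper endpoint of the $q$-integral into
\[
v\,\frac{(q,qv/u;q)_\infty}{(cv;q)_\infty}\sum_{n=0}^\infty\frac{(cv;q)_n\,q^n}{(q,qv/u;q)_n},
\]
the lower endpoint being its image under $u\leftrightarrow v$.

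The gap is the step you yourself isolate as the key lemma: that each endpoint sum equals the corresponding series printed in Proposition~\ref{ramppreciprocity}. That claim is false. Take $u=v$ and $cv=q$: by Euler's two summations the endpoint sum above equals $v(q;q)_\infty\cdot(q;q)_\infty^{-1}=v$, while the printed series gives $v\sum_{n\ge0}(-1)^nq^{n(n+1)/2}/(q;q)_n=v(q;q)_\infty\neq v$. No Heine-type argument can establish the lemma as you state it. Moreover, the failure is not merely that one must antisymmetrize before comparing: the proposition as printed is itself misstated (evidently a typo; at $q=0.1$, $u=0.2$, $v=0.3$, $c=0.4$ its left side is $\approx 0.1095$ while its right side is $\approx 0.0870$). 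What the paper's own specialization $b=d=0$ of Proposition~\ref{liuppreciprocity} actually yields is the ``aligned'' form
\begin{align*}
&v\sum_{n=0}^\infty\frac{(-1)^nq^{n(n+1)/2}(v/u)^n}{(cv;q)_{n+1}}
-u\sum_{n=0}^\infty\frac{(-1)^nq^{n(n+1)/2}(u/v)^n}{(cu;q)_{n+1}}\\
&\qquad=\frac{(v-u)(q,qv/u,qu/v;q)_\infty}{(cu,cv;q)_\infty},
\end{align*}
in which the ratio $(v/u)^n$ is paired with $(cv;q)_{n+1}$, rather than $(u/v)^n$ with $(cv;q)_n$.

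For that corrected statement your plan does close, and with exactly the tool you anticipated. The $b\to 0$ limit of Heine's transformation ${}_2\phi_1(a,b;c;q,z)=\frac{(b,az;q)_\infty}{(c,z;q)_\infty}\,{}_2\phi_1(c/b,z;az;q,b)$, applied with $(a,b,c,z)=(cv,0,qv/u,q)$, gives
\[
\frac{(q,qv/u;q)_\infty}{(cv;q)_\infty}\sum_{n=0}^\infty\frac{(cv;q)_n\,q^n}{(q,qv/u;q)_n}
=\sum_{n=0}^\infty\frac{(-1)^nq^{n(n+1)/2}(v/u)^n}{(cv;q)_{n+1}},
\]
which identifies each endpoint sum with the corresponding aligned partial-theta series term for term (both sides equal $1$ at the test point above); subtracting the $u\leftrightarrow v$ image then proves the corrected reciprocity formula. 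So your strategy is sound and, once the target identity is corrected and this single application of Heine's transformation is supplied, it furnishes a derivation from Proposition~\ref{AAintegralpp} that the paper does not give. As submitted, however, the central lemma is untrue, so the proof does not stand.
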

By using the $q$-exponential operator to Ramanujan's $_1\psi_1$ summation, we \cite[Theorem~6]{Liu2003} proved
the following reciprocity formula, which is equivalent to an identity of Andrews.
This formula was used to give a simple evaluation of the Askey-Wilson integral \cite{Liu2009}.
\begin{prop}\label{liuppreciprocity} {\rm(Liu \cite[Theorem~6]{Liu2003})}. For $\max\{|bu|, |bv|\}<1,$ we have the reciprocity formula
\begin{align*}
&v\sum_{n=0}^\infty \frac{(q/bu, cduv; q)_n(bv)^n}{(cv, dv; q)_{n+1}}
-u\sum_{n=0}^\infty \frac{(q/bv, cduv; q)_{n}(bu)^n}{(cu, du; q)_{n+1}}\\
&=\frac{(v-u)(q, qv/u, qu/v, bcuv, bduv, cduv; q)_\infty}{(bu, bv, cu, cv, du, dv; q)_\infty}.
\end{align*}
\end{prop}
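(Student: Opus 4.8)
The plan is to derive Proposition~\ref{liuppreciprocity} from the Al-Salam--Verma $q$-integral evaluation of Theorem~\ref{ssvthm} by expanding that integral through the defining series of the Jackson $q$-integral. First I would relabel the parameters $a,b,c\mapsto b,c,d$ in Theorem~\ref{ssvthm} and record the elementary identity $v(u/v;q)_\infty=(v-u)(qu/v;q)_\infty$, which follows from $(u/v;q)_\infty=(1-u/v)(qu/v;q)_\infty$. A short computation then shows that $\frac{1}{1-q}$ times the relabelled integral $\int_u^v (qx/u,qx/v,bcduvx;q)_\infty/(bx,cx,dx;q)_\infty\,d_qx$ is exactly the right-hand side of Proposition~\ref{liuppreciprocity}. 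Thus it suffices to prove that the left-hand side of the reciprocity formula equals this same normalized $q$-integral.

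Next I would apply the definition of the Jackson $q$-integral, writing the integral as $(1-q)\sum_{n\ge0}\big[vF(vq^n)-uF(uq^n)\big]q^n$ with $F(x)=(qx/u,qx/v,bcduvx;q)_\infty/(bx,cx,dx;q)_\infty$. Using $(zq^n;q)_\infty=(z;q)_\infty/(z;q)_n$ to factor the $n=0$ infinite products out of each term, the $v$-sum and the $u$-sum each collapse into a nonterminating basic hypergeometric series (a ${}_3\phi_2$ of argument $q$) times an explicit product; note that $qx/v$ evaluated at $x=vq^n$ produces $(q^{n+1};q)_\infty=(q;q)_\infty/(q;q)_n$, which supplies the $(q;q)_n$ in the denominator. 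At this stage the right-hand side of the reciprocity formula is displayed as a difference of two nonterminating series in ``direct'' form, to be matched against the two series in Proposition~\ref{liuppreciprocity}, which instead carry the reciprocal arguments $(q/bu;q)_n$, $(q/bv;q)_n$ and the shifted denominators $(cv,dv;q)_{n+1}$, $(cu,du;q)_{n+1}$.

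The main obstacle is precisely this last matching: one must transform each ``direct'' half-series into its reciprocal-argument form. I would do this with a nonterminating Heine/Sears type ${}_3\phi_2$ transformation (see \cite{Gas+Rah}), which absorbs the infinite-product prefactor and sends the argument $q$ to $bv$ (respectively $bu$) while converting $bv$ into $q/bu$; here the hypothesis $\max\{|bu|,|bv|\}<1$ is exactly what guarantees that both the original series and its transform converge. Because pinning down the precise transformation is delicate, a safer self-contained variant is to first prove the identity when $bu=q^{N+1}$, so that $(q/bu;q)_n=(q^{-N};q)_n$ truncates the first series and reduces its verification to a terminating transformation; since both sides are analytic in $b$ on $|bu|,|bv|<1$ and the points $b=q^{N+1}/u$ accumulate at the interior point $b=0$, the general case then follows by analytic continuation in $b$. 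As a consistency check and natural base case, letting $b,d\to0$ degenerates the formula into Ramanujan's reciprocity (Proposition~\ref{ramppreciprocity}); this also explains why the original argument in \cite{Liu2003} introduced the extra parameters by applying a $q$-exponential operator to Ramanujan's ${}_1\psi_1$ summation.
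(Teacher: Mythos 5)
Your reduction is set up correctly, and it is not circular: Theorem~\ref{ssvthm} is proved in the paper independently of Proposition~\ref{liuppreciprocity}, the relabelling together with $v(u/v;q)_\infty=(v-u)(qu/v;q)_\infty$ does identify the two product sides, and Jackson's definition turns $\frac{1}{1-q}\int_u^v$ into $\sum_{n\ge 0}\left[vF(vq^n)-uF(uq^n)\right]q^n$, whose two halves are explicit infinite products times the balanced series
\[
{}_3\phi_2\!\left({{bv,\, cv,\, dv}\atop{qv/u,\ bcduv^2}}; q,\, q\right),
\qquad
{}_3\phi_2\!\left({{bu,\, cu,\, du}\atop{qu/v,\ bcdu^2v}}; q,\, q\right).
\]
The genuine gap is the step you yourself flag as ``delicate'': no \emph{single} Heine/Sears-type two-term transformation sends these argument-$q$ series to the argument-$bv$ (resp.\ $bu$) series of the proposition. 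One application of the standard nonterminating relation (Gasper--Rahman (III.9)) produces arguments of the form $e/a$ with $e\in\{qv/u,\,bcduv^2\}$ and $a\in\{bv,cv,dv\}$, i.e.\ $q/bu$, $q/cu$, $q/du$, $cduv$, $bduv$, $bcuv$ --- never $bv$. What actually works, and what is missing from your write-up, is a composition of \emph{two} applications of (III.9): first with $(a,b,c;d,e)=(cv,bv,dv;\,qv/u,\,bcduv^2)$, yielding ${}_3\phi_2\!\left({{cv,\, q/bu,\, q/du}\atop{qv/u,\ qcv}}; q,\, bduv\right)$ with an explicit prefactor, and then with $(a,b,c;d,e)=(q/bu,\,cv,\,q/du;\,qcv,\,qv/u)$, yielding ${}_3\phi_2\!\left({{q,\, q/bu,\, cduv}\atop{qcv,\ qdv}}; q,\, bv\right)$; the accumulated infinite products collapse against the Jackson prefactor to $1/\bigl((1-cv)(1-dv)\bigr)$, so each Jackson half equals the corresponding half of the reciprocity formula (the intermediate argument $bduv$ needs a temporary convergence assumption, removable by continuation in $d$). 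Without identifying this chain, the proof is empty at precisely the point where all the content lies.

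Your fallback does not repair this. Setting $bu=q^{N+1}$ terminates only one of the four series in play: the second series of the proposition (argument $q^{(N+1)n}$) and both Jackson halves (upper parameters $bv,cv,dv$ and $bu,cu,du$, none of the form $q^{-N}$) remain nonterminating, so nothing ``reduces to a terminating transformation''; you would still face a mixed nonterminating identity for every $N$. (The analytic-continuation shell itself is sound, since $(q/bu;q)_n(bv)^n=\prod_{k=0}^{n-1}(bv-q^{k+1}v/u)$ is a polynomial in $b$ and the points $q^{N+1}/u$ accumulate at the interior point $b=0$.) For the record, the paper never proves this proposition: it is quoted from \cite[Theorem~6]{Liu2003}, where it was obtained by applying a $q$-exponential operator to Ramanujan's ${}_1\psi_1$ summation, and in this paper it serves as the input from which Kang's form and then the Chu--Zhang variant of Bailey's ${}_6\psi_6$ are derived. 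So your route, once the two-step (III.9) transformation is supplied, would give a genuinely different, self-contained derivation from Theorem~\ref{ssvthm}; as written, its decisive step is missing and the proposed substitute for it fails.
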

Ramanujan's reciprocity formula  is the special case $b=d=0$ of Proposition~\ref{liuppreciprocity}.

Using a limiting case of Watson's $q$-analog of Whipple's theorem,
Kang \cite{Kang} found the following equivalent form of Proposition~\ref{liuppreciprocity}.
\begin{prop} \label{kangppreciprocity} {\rm (Kang \cite[Theorem~1.2]{Kang})}. We have
\begin{align*}
&v\sum_{n=0}^\infty (1-q^{2n+1}v/u)\frac{(q/bu, q/cu, q/du; q)_n}{(bv, cv, dv; q)_{n+1}} q^{n(n-1)/2} (-bcduv^2)^n\\
&-u\sum_{n=0}^\infty (1-q^{2n+1}u/v)\frac{(q/bv, q/cv, q/dv; q)_n}{(bu, cu, du; q)_{n+1}} q^{n(n-1)/2} (-bcdvu^2)^n\\
&=(v-u)\frac{(q, qv/u, qu/v, bcuv, bduv, cduv; q)_\infty}{(bu, bv, cu, cv, du, dv; q)_\infty}.
\end{align*}
\end{prop}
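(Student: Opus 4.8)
The plan is to reduce Proposition~\ref{kangppreciprocity} to the already-established Proposition~\ref{liuppreciprocity}, exploiting that the two statements have \emph{identical} right-hand sides, namely the product
\[
(v-u)\frac{(q, qv/u, qu/v, bcuv, bduv, cduv; q)_\infty}{(bu, bv, cu, cv, du, dv; q)_\infty}.
\]
Since Proposition~\ref{liuppreciprocity} supplies the value of this product, it is enough to prove that the two left-hand sides agree. Each of them has the shape $v\,S(u,v)-u\,S(v,u)$, where $S(u,v)$ denotes the respective first sum (the one carrying the prefactor $v$); indeed, interchanging $u$ and $v$ in the first sum reproduces the second. Consequently the whole identity collapses to a single series transformation, namely
\[
\sum_{n=0}^\infty \frac{(q/bu, cduv; q)_n (bv)^n}{(cv, dv; q)_{n+1}}
=\sum_{n=0}^\infty (1-q^{2n+1}v/u)\frac{(q/bu, q/cu, q/du; q)_n}{(bv, cv, dv; q)_{n+1}}\, q^{n(n-1)/2}(-bcduv^2)^n,
\]
after which Proposition~\ref{kangppreciprocity} follows upon forming $v\,S(u,v)-u\,S(v,u)$ and invoking Proposition~\ref{liuppreciprocity}.

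The next step is to recognize the series on the right as very-well-poised in structure. Setting $a=qv/u$, the factor $1-q^{2n+1}v/u=1-aq^{2n}$ is exactly the very-well-poised factor; and after factoring $(bv;q)_{n+1}=(1-bv)(qbv;q)_n$ (and similarly for $cv,dv$) the surviving lower parameters are $qbv,qcv,qdv$, which equal $aq/(q/bu),\,aq/(q/cu),\,aq/(q/du)$. Thus the three free numerator parameters $q/bu,q/cu,q/du$ are correctly paired against the denominator, confirming the well-poised balance, while the series on the left carries only two numerator and two denominator factors. The passage from one to the other is precisely a limiting case of Watson's $q$-analogue of Whipple's theorem (the transformation of a very-well-poised ${}_8\phi_7$ into a balanced ${}_4\phi_3$): sending the appropriate parameters to infinity contracts the very-well-poised series into the lower-order series, which is the route by which Kang obtained her formula. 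I would carry this reduction out explicitly and match the two sides term by term.

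Because both series are nonterminating, I would not use the terminating form of Watson's theorem directly but rather its nonterminating limit, justifying the limit and the interchange of summations by the absolute convergence guaranteed by the hypotheses $\max\{|bu|,|bv|,\ldots\}<1$, in the same spirit as the Tannery-type argument already used in the proof of Theorem~\ref{rogersliuthm}. I note also that the antisymmetric product $(qv/u,qu/v;q)_\infty$ on the right is not analytic at the origin in $(u,v)$, so the $q$-partial differential equation method of Theorem~\ref{mainthmliu} does not apply to the reciprocity directly; the equivalence route through Watson's theorem is the natural one.

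The step I expect to be the main obstacle is pinning down the \emph{exact} limiting case of Watson's transformation: one must decide which parameters are sent to infinity, and at what relative rates, so as to produce the confluent factor $q^{n(n-1)/2}(-bcduv^2)^n$ in place of the customary $(a;q)_n/(q;q)_n$ term of a genuine very-well-poised series, all while preserving the well-poised balance and the convergence. Getting this limit and the attendant interchange of limit and summation correct is where the real work lies; once it is in hand, the remainder is bookkeeping together with a single appeal to Proposition~\ref{liuppreciprocity}.
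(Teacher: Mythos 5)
Your overall route is exactly the one the paper (reproducing Kang's argument) takes: the two propositions share the same right-hand side, both left-hand sides have the shape $v\,S(u,v)-u\,S(v,u)$, so everything reduces to the single series transformation you display, after which one quotes Proposition~\ref{liuppreciprocity}. Your identification of the very-well-poised parameter as $qv/u$ is also precisely the substitution the paper makes. The problem is that your proposal stops at the only step that constitutes real content: you never specify which limiting case of Watson's theorem produces the transformation, and you explicitly label this as an unresolved obstacle ("where the real work lies"). As written, this is a genuine gap, since without the exact specialization the proof does not exist.

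The missing step requires no search over "relative rates": in the statement of Watson's theorem \reff{wweqn}, set $a=1$ and then let $m\to\infty$. The specialization $a=1$ cancels $(q/a;q)_n=(q;q)_n$ against the $(q;q)_n$ of the ${}_8\phi_7$ and $(\alpha;q)_n$ against $(\alpha a;q)_n$, collapsing it to a confluent very-well-poised series with the factor $1-\alpha q^{2n}$; the limit $m\to\infty$ then uses $(q^{-m};q)_n q^{mn}\to(-1)^n q^{n(n-1)/2}$ and $(\alpha q^{m+1};q)_n\to 1$ on that side, and $(q^{-m};q)_n q^n/(q^{2-m}/\alpha b;q)_n\to(\alpha b/q)^n$ on the ${}_4\phi_3$ side, giving
\[
(1-\alpha b/q)\sum_{n=0}^\infty \frac{(q/b,\alpha cd/q;q)_n}{(\alpha c,\alpha d;q)_n}\left(\frac{\alpha b}{q}\right)^n
=\sum_{n=0}^\infty \frac{(1-\alpha q^{2n})(q/b,q/c,q/d;q)_n}{(\alpha b,\alpha c,\alpha d;q)_n}(-1)^n q^{n(n-1)/2}\left(\frac{\alpha^2 bcd}{q^2}\right)^n.
\]
Substituting $(\alpha,b,c,d)=(qv/u,bu,cu,du)$, dividing by $(1-bv)(1-cv)(1-dv)$, and multiplying by $v$ yields exactly the one-series identity you isolated; interchanging $u$ and $v$ handles the second sum, and Proposition~\ref{liuppreciprocity} finishes the proof. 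With this inserted, your argument coincides with the paper's proof. (Your side remark that the $q$-partial differential equation method of Theorem~\ref{mainthmliu} cannot be applied directly here, because $(qv/u,qu/v;q)_\infty$ is not analytic at $(u,v)=(0,0)$, is correct, and is consistent with the paper using this proposition only as an input, in the variables $a,b$, to the proof of Proposition~\ref{Chuzhangpp}.)
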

For completeness, we will reproduce Kang's proof  here.
\begin{proof}
Watson's $q$-analog of Whipple's
theorem \cite[Eq. (2.5.1)]{Gas+Rah} can be stated as follows
\begin{equation}
\frac{(\alpha q, \alpha ab/q; q)_m}{(\alpha a, \alpha b; q)_m}
{_4\phi_3}\left({{q^{-m}, q/a, q/b, \alpha cd/q }\atop{\alpha c, \alpha d, q^2/{\alpha ab q^m}}}; q, q\right)
\label{wweqn}
\end{equation}
\[
={_8\phi_7}\({{q^{-m}, q\sqrt{\alpha}, -q\sqrt{\alpha}, \alpha, q/a, q/b, q/c, q/d}
\atop{\sqrt{\alpha}, -\sqrt{\alpha}, \alpha a, \alpha b, \alpha c, \alpha d, \alpha q^{m+1}}}; q, \frac{\alpha^2 abcdq^m}{q^2}\).
\]
Setting $a=1$ and then letting $m\to \infty $ in the above equation and simplifying, we find for,  $|\alpha b/q^2|<1$, that
\begin{align*}
&(1-\alpha b/q) \sum_{n=0}^\infty \frac{(q/b, \alpha cd/q; q)_n}{(\alpha c, \alpha d; q)_n} (\alpha b/q)^n
\\
&=\sum_{n=0}^\infty \frac{(1-\alpha q^{2n}) ( q/b, q/c, q/d; q)_n}
{( \alpha b, \alpha c, \alpha d; q)_n} (-1)^n q^{n(n-1)/2} \(\frac{\alpha^2 bcd}{q^2}\)^n.
\end{align*}
Replacing  $(\alpha, b, c, d)$  by $(qv/u, bu, cu, du)$ in the above equation, then dividing both
sides of resulting equation by $(1-bv)(1-cv)(1-dv)$, and finally multiplying both sides by $v$,
we find that the first summation in the left-hand side of the equation in Proposition~\ref{liuppreciprocity}
equlas
\[
v\sum_{n=0}^\infty (1-q^{2n+1}v/u)\frac{(q/bu, q/cu, q/du; q)_n}{(bv, cv, dv; q)_{n+1}} q^{n(n-1)/2} (-bcduv^2)^n.
\]
Interchanging $u$ and $v$, it is found that the second summation in the left-hand side of
the equation in Proposition~\ref{liuppreciprocity} is equal to
\[
u\sum_{n=0}^\infty (1-q^{2n+1}u/v)\frac{(q/bv, q/cv, q/dv; q)_n}{(bu, cu, du; q)_{n+1}} q^{n(n-1)/2} (-bcdu^2v)^n.
\]
Thus, we complete the proof of Proposition~\ref{kangppreciprocity}.
\end{proof}
The following general reciprocity formula was derived by Chu and Zhang from Bailey's $_6\psi_6$ summation,
which is in fact a variant form of Bailey's $_6\psi_6$ summation. We will show that this  reciprocity formula
can be derived from Kang's reciprocity formula by using Proposition~\ref{qppmotivation}. Thus, we
give a new proof of  Bailey's $_6\psi_6$ summation.
\begin{prop}\label{Chuzhangpp} {\rm(Chu and Zhang \cite[Theorem~5]{ChuZhang})}. We have
\begin{align*}
&v\sum_{n=0}^\infty (1-q^{2n+1}v/u)\frac{(q/au, q/bu, q/cu, q/du; q)_n}{(av, bv, cv, dv; q)_{n+1}} (abcdu^2v^2/q)^n\\
&-u\sum_{n=0}^\infty (1-q^{2n+1}u/v)\frac{(q/av, q/bv, q/cv, q/dv; q)_n}{(au, bu, cu, du; q)_{n+1}} (abcdu^2v^2/q)^n\\
&=(v-u)\frac{(q, qv/u, qu/v, abuv, acuv, aduv, bcuv, bduv, cduv; q)_\infty}{(au, av, bu, bv, cu, cv, du, dv, abcdu^2v^2/q; q)_\infty}.
\end{align*}
\end{prop}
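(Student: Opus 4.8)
The plan is to treat the left-hand side of the asserted identity as a function $F(a,b)$ of the two variables $a,b$, holding $c,d,u,v$ fixed, and to play it against the right-hand side $G(a,b)$ by means of the expansion theorem in Proposition~\ref{qppmotivation}(i). The guiding principle is a uniqueness statement implicit in that proposition: a function analytic at $(0,0)$ and annihilated by $\partial_{q,a}-\partial_{q,b}$ is completely determined by its restriction to $b=0$, since writing $f=\sum_{n}\alpha_n h_n(a,b|q)$ and using $h_n(a,0|q)=a^n$ shows that the $\alpha_n$ are exactly the Taylor coefficients of $f(a,0)$. Hence it suffices to establish (a) both $F$ and $G$ are analytic at the origin, (b) both satisfy $\partial_{q,a}\{f\}=\partial_{q,b}\{f\}$, and (c) $F(a,0)=G(a,0)$; these together force $F\equiv G$.

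For analyticity of $F$ I would first apply the inversion $(q/au;q)_n=(-q/au)^n q^{\binom{n}{2}}(auq^{-n};q)_n$ to the factors $(q/au;q)_n$ and $(q/bu;q)_n$ in the first sum (and the analogous $v$-factors in the second), absorbing the powers $(au)^n,(bu)^n$ into $(abcdu^2v^2/q)^n=(au)^n(bu)^n(cdv^2/q)^n$. This recasts each summand with polynomial numerator $(auq^{-n};q)_n(buq^{-n};q)_n$ and denominator $(av,bv;q)_{n+1}$, nonvanishing near $a=b=0$, and simultaneously exposes a Gaussian factor $q^{n^2+n}$ forcing absolute convergence, so $F$ is analytic at $(0,0)$. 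Analyticity of $G$ is immediate since every factor is analytic there (in particular $(abcdu^2v^2/q;q)_\infty\to 1$). The equation $\partial_{q,a}\{G\}=\partial_{q,b}\{G\}$ is then a direct computation of exactly the type used for Theorem~\ref{aqmehler}: the pair $a,b$ occurs in $G$ through the symmetric bilinear factors $(abuv;q)_\infty$ and $(abcdu^2v^2/q;q)_\infty$ together with the mirror-image single factors $(acuv,aduv;q)_\infty/(au,av;q)_\infty$ and $(bcuv,bduv;q)_\infty/(bu,bv;q)_\infty$, and repeated use of $(z;q)_\infty=(1-z)(qz;q)_\infty$ yields the equality.

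Step (c) is where Kang's formula enters. Setting $b=0$ and using $h_n(a,0|q)=a^n$ on the right and the reversed form of $F$ on the left, the factors $(q/bu;q)_n(bu)^n\to(-q)^n q^{\binom{n}{2}}$ and $(bv;q)_{n+1}\to 1$ collapse each summand of $F(a,0)$ to the corresponding summand of the Kang series with parameters $(a,c,d)$, while $G(a,0)$ collapses to the Kang product $(v-u)(q,qv/u,qu/v,acuv,aduv,cduv;q)_\infty/(au,av,cu,cv,du,dv;q)_\infty$. By Proposition~\ref{kangppreciprocity} these two agree, so $F(a,0)=G(a,0)$. Combining (a)--(c) with the uniqueness noted above gives $F\equiv G$, which is the claimed reciprocity formula; as it is a variant of Bailey's $_6\psi_6$ summation, this simultaneously reproves that summation.

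The main obstacle is step (b) for the left-hand side, namely verifying $\partial_{q,a}\{F\}=\partial_{q,b}\{F\}$. Unlike the product $G$, the summand of $F$ does not satisfy the equation term by term: a single factor $\phi_n(a)\phi_n(b)$ with $\phi_n$ rational has non-constant logarithmic $q$-derivative, so the equality can hold only through cancellation across the whole series. I expect the cleanest route is to recast $F$ as a single bilateral series $\sum_{n\in\Z}w_n(a,b)$, which is natural given the reciprocity shape $v\,S_1-u\,S_2$, and then to exhibit $\partial_{q,a}\{w_n\}-\partial_{q,b}\{w_n\}$ as a telescoping difference $W_n-W_{n+1}$; the Gaussian decay as $n\to\pm\infty$ kills the boundary contributions, so the bilateral sum of the difference vanishes. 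Carrying out this telescoping, through the contiguous relations for the $q$-shifted factorials obtained by shifting $n\to n\pm1$, is the one genuinely laborious computation in the argument.
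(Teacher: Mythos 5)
Your overall architecture (analyticity of both sides, a $q$-partial differential equation in $(a,b)$, boundary values on one axis via Kang's formula, then uniqueness of the expansion coefficients) is exactly the paper's, but the pivotal claim in your step (b) is false: neither the left-hand side $F$ nor the right-hand side $G$ satisfies $\partial_{q,a}\{f\}=\partial_{q,b}\{f\}$. You can see this directly on $G$. Its $(a,b)$-dependent part is $H(a,b)=\dfrac{(abuv,acuv,aduv,bcuv,bduv;q)_\infty}{(au,av,bu,bv,abcdu^2v^2/q;q)_\infty}$, and $(z;q)_\infty=(1-z)(qz;q)_\infty$ gives
\[
\frac{\partial_{q,a}\{H\}}{H}=\frac{1}{a}\left[1-\frac{(1-au)(1-av)(1-abcdu^2v^2/q)}{(1-abuv)(1-acuv)(1-aduv)}\right].
\]
Specializing $d=0$, $u=v=1$ yields $\partial_{q,a}\{H\}/H=\frac{(2-b-c)+a(bc-1)}{(1-ab)(1-ac)}$ and, by symmetry, $\partial_{q,b}\{H\}/H=\frac{(2-a-c)+b(ac-1)}{(1-ab)(1-bc)}$; at $a=0$ these equal $2-b-c$ and $(2-b-c)/(1-bc)$ respectively, which differ. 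The reason is structural: $G$ mixes an $ab$-factor upstairs, $(abuv;q)_\infty$, with an $ab$-factor downstairs, $(abcdu^2v^2/q;q)_\infty$, plus single-variable factors both upstairs and downstairs, and such a mixture satisfies neither the $q$-PDE nor the $q^{-1}$-PDE. Since $F=G$ is precisely the (true) identity being proved, $F$ inherits this failure, so the telescoping computation you deferred as ``genuinely laborious'' is not laborious but impossible.

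The paper's proof repairs exactly this point, and the repair is the idea your proposal is missing. Using $z^n(q/z;q)_n=(-1)^nq^{n(n+1)/2}(zq^{-n};q)_\infty/(z;q)_\infty$ (your inversion, but pushed all the way to infinite products), it multiplies both sides by $(au,av,bu,bv;q)_\infty/(abuv;q)_\infty$, turning the left side into $f(a,b)=\sum_n A_n(a,b,u,v)B_n(u,v)q^{n^2+n}-\sum_n A_n(a,b,v,u)B_n(v,u)q^{n^2+n}$ with $A_n(a,b,u,v)=(auq^{-n},buq^{-n},avq^{n+1},bvq^{n+1};q)_\infty/(abuv;q)_\infty$. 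The crucial feature is that $(uq^{-n})(vq^{n+1})/q=uv$ is independent of $n$, so each individual summand $A_n$ has exactly the shape appearing in the $q$-Mehler formula of Theorem~\ref{bqmehler} and satisfies $\partial_{q^{-1},a}\{A_n\}=\partial_{q^{-1},b}\{A_n\}$ term by term --- no cancellation across the series is needed. One then invokes Proposition~\ref{qppmotivation}(ii), expanding in the Stieltjes--Wigert polynomials $g_n(a,b|q)$ rather than in $h_n(a,b|q)$, computes the coefficients at $a=0$ using $g_n(0,b|q)=b^n$ together with Kang's formula (your step (c), which is correct in spirit), and finishes with the generating function and $q$-Mehler formula for $g_n$. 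So the gap cannot be patched by more computation: the fix requires renormalizing both sides by $(au,av,bu,bv;q)_\infty/(abuv;q)_\infty$ and switching to the $q^{-1}$/Stieltjes--Wigert half of the expansion theorem.
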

\begin{proof}
For the sake of brevity, we first introduce the compact notation
$A_n(a, b, u, v), B_n(u, v)$ and $f(a, b)$ as follows
\begin{align*}
&A_n(a, b, u, v)=\frac{(auq^{-n}, buq^{-n}, avq^{n+1}, bvq^{n+1}; q)_\infty}
{(abuv; q)_\infty},\\
&B_n(u, v)=v (1-q^{2n+1}v/u)\frac{(q,  q/cu, q/du; q)_n}{(cv, dv; q)_{n+1}} (cdv^2/q)^n,\\
&f(a, b)=\sum_{n=0}^\infty A_n(a, b, u, v)B_n(u, v)q^{n^2+n}
-\sum_{n=0}^\infty A_n(a, b, v, u)B_n(v, u) q^{n^2+n}.
\end{align*}
Using the ratio test, we can show that $f(a, b)$ is analytic near $(a, b)=(0, 0).$
By a direct computation, we find that $\partial_{q^{-1}, a}\{f(a, b)\}=\partial_{q^{-1}, b}\{f(a, b)\}$ equals
\begin{align*}
&\frac{1}{(q-abuv)}\sum_{n=0}^\infty A_n(a, b, u, v)B_n(u, v)q^{n^2+n}{\(u/q^{n}+vq^{n+1}-(a+b)uv\)}\\
&-\frac{1}{(q-abuv)}\sum_{n=0}^\infty A_n(a, b, v, u)B_n(v, u) q^{n^2+n}{\(v/q^{n}+uq^{n+1}-(a+b)uv\)}.
\end{align*}
Thus, by (ii) in proposition~\ref{qppmotivation}, there exists a sequence $\{\alpha_n\}$ independent of
$a$ and $b$ such that
\[
f(a, b)=\sum_{n=0}^\infty \alpha_n g_n(a, b|q).
\]
Setting $a=0$ in the above equation,  and using the fact that $g_n(0, b|q)=b^n$, we find that
\begin{align*}
\sum_{n=0}^\infty \alpha_n b^n
&=\sum_{n=0}^\infty B_n(u, v) q^{n^2+n} (buq^{-n}, bvq^{n+1}; q)_\infty\\
&\quad-\sum_{n=0}^\infty B_n(v, u) q^{n^2+n} (bvq^{-n}, buq^{n+1}; q)_\infty.
\end{align*}
By a direct computation, we find, for any complex $z$ and any  integer $n$, that
\begin{equation}
z^n (q/z; q)_n=(-1)^n q^{n(n+1)/2} {(zq^{-n}; q)_\infty}/{(z; q)_\infty}.
\label{rrameqn1}
\end{equation}
It follows that
\begin{align*}
\sum_{n=0}^\infty \alpha_n b^n
&=(bu, bv; q)_\infty\sum_{n=0}^\infty B_n(u, v) (-1)^n q^{n(n+1)/2}\frac{(q/bu; q)_n (bu)^n}{(bv; q)_{n+1}}\\
&\quad-(bu, bv; q)_\infty\sum_{n=0}^\infty B_n(v, u) q^{n(n+1)/2}\frac{(q/bv; q)_n (bv)^n}{(bu; q)_{n+1}}.
\end{align*}
By Proposition~\ref{kangppreciprocity}, we find that the right-hand side of the above equation is equal to
\[
\sum_{n=0}^\infty \alpha_n b^n
=(v-u)\frac{(q, qv/u, qu/v, bcuv, bduv, cduv; q)_\infty}{(cu, cv, du, dv; q)_\infty}.
\]
Using the generating function for $g_n$ in Theorem~\ref{gefunthm}, we find that
\[
\sum_{n=0}^\infty (-1)^n q^{n(n-1)/2}g_n(cuv, duv|q) \frac{b^n}{(q; q)_n}
=(bcuv, bduv; q)_\infty.
\]
Equating the coefficients in the above two equations, we deduce that
\[
\alpha_n=(-1)^n(v-u) q^{n(n-1)/2}\frac{g_n(cuv, duv|q)(q, qv/u, qu/v, cduv; q)_\infty} {(q; q)_n(cu, cv, du, dv; q)_\infty}.
\]
Thus we have
\begin{align*}
f(a, b)&=(v-u)\frac{(q, qv/u, qu/v, cduv; q)_\infty}{(cu, cv, du, dv; q)_\infty}\\
&\qquad \times \sum_{n=0}^\infty (-1)^n q^{n(n-1)/2}
\frac{g_n(a, b|q)g_n(cuv, duv|q)}{(q; q)_n}.
\end{align*}
Using the $q$-Mehler formula for $g_n$ in Theorem~\ref{bqmehler}, we conclude that
\[
\sum_{n=0}^\infty (-1)^n q^{n(n-1)/2}
\frac{g_n(a, b|q)g_n(cuv, duv|q)}{(q; q)_n}
=\frac{(acuv, aduv, bcdu, bcdv; q)_\infty}{(abcdu^2v^2/q; q)_\infty}.
\]
Combining the above two equations, we immediately find that
\[
f(a, b)=(v-u)\frac{(q, qv/u, qu/v, acuv, aduv, bcdu, bcdv, cduv; q )_\infty}
{(cu, cv, du, dv, abcdu^2v^2/q ; q)_\infty},
\]
which is the same as Proposition~\ref{Chuzhangpp} after applying (\ref{rrameqn1})
to the left-hand side of the above equation.
\end{proof}
\section{ A general $q$-transformation formula }
We proved the following general expansion formula for $q$-series \cite[Theorem~1.1]{Liu2013}.
\begin{thm}\label{newliuthm} If $f(x)$ is an analytic function near $x=0,$
then,  under suitable convergence conditions,  we have
\begin{align*}
&\frac{(\alpha q, \alpha ab/q; q)_\infty}
{(\alpha a, \alpha b; q)_\infty} f(\alpha a)\\
=\sum_{n=0}^\infty & \frac{(1-\alpha q^{2n}) (\alpha, q/a; q)_n (a/q)^n}
{(1-\alpha
)(q, \alpha a; q)_n}
\sum_{k=0}^n \frac{(q^{-n}, \alpha q^n; q)_k q^k}
{(q, \alpha b; q)_k}f(\alpha q^{k+1}).\nonumber
\end{align*}
\end{thm}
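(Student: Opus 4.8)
My plan is to use the linearity of the asserted identity in $f$ to reduce the general statement to a single explicit $q$-hypergeometric identity, which can then be settled with the summation theorems already established above. First I would note that both sides are linear in $f$ and that, since $f$ is analytic near the origin, I may write $f(x)=\sum_{m\ge 0}\lambda_m x^m$; under the stated convergence hypotheses the $m$-summation can be interchanged with the $q$-series on each side, so it suffices to prove the formula for the monomials $f(x)=x^m$. To treat all monomials simultaneously I would test the generating function $f(x)=1/(cx;q)_\infty=\sum_{m\ge 0}c^m x^m/(q;q)_m$ for small $|c|$ and recover the monomial case by reading off the coefficient of $c^m$, whence the general case follows by linearity and absolute convergence.

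For this choice the left-hand side collapses to the closed product
\[
\frac{(\alpha q, \alpha ab/q; q)_\infty}{(\alpha a, \alpha b, \alpha ac; q)_\infty},
\]
while the identity $f(\alpha q^{k+1})=(\alpha cq;q)_k/(\alpha cq;q)_\infty$ turns the right-hand side into $1/(\alpha cq;q)_\infty$ times an explicit double $q$-series. The heart of the proof is to evaluate this double sum. I would interchange the order of summation and carry out the $n$-summation first; the resulting inner sum is independent of $f$ and is very-well-poised, carrying the factor $(1-\alpha q^{2n})/(1-\alpha)$ alongside $(\alpha, q/a;q)_n/(q,\alpha a;q)_n$ and the extra symbol $(q^{-n},\alpha q^n;q)_k$. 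After the reindexing $n\mapsto n+k$ this is a very-well-poised ${}_6\phi_5$, summable in closed form by Rogers' summation (Proposition~\ref{rogerspp}) in a suitable limiting form, and the surviving sum over $k$ is then summed by the $q$-binomial theorem to reproduce the product above. Equivalently, after the substitution the full double sum may be recognized directly as an instance of Watson's transformation~\reff{wweqn} between a very-well-poised ${}_8\phi_7$ and a balanced ${}_4\phi_3$, a recognition consistent with the fact that the present theorem specializes to Watson's formula.

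As a sanity check I would verify the case $f\equiv 1$ (equivalently $c=0$): here the inner $k$-sum is a ${}_2\phi_1$ that the $q$-Chu--Vandermonde summation collapses, turning the outer $n$-sum into a single very-well-poised ${}_6\phi_5$ whose value is exactly the $c\to 0$ limit of Proposition~\ref{rogerspp}, namely $(\alpha q,\alpha ab/q;q)_\infty/(\alpha a,\alpha b;q)_\infty$. This is the asserted identity for constant $f$ and pins down the overall normalization.

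The main difficulty will be twofold. First, the phrase ``under suitable convergence conditions'' must be made precise: I would need ratio-test or asymptotic estimates for the very-well-poised terms to license both the interchange of the $m$-summation with the $q$-series and the reversal of the order of the $n$- and $k$-summations. Second, and more substantively, the closed-form evaluation of the $f$-independent inner sum after the shift $n\mapsto n+k$ is delicate---one must select the limiting parameter in Rogers' ${}_6\phi_5$ correctly and keep exact track of the shifted factorials $(q^{-n},\alpha q^n;q)_k$. This Pochhammer-symbol bookkeeping, rather than any conceptual step, is where I expect the real work to lie.
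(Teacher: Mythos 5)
First, a point of comparison: the paper itself contains \emph{no} proof of Theorem~\ref{newliuthm}; it is quoted from \cite{Liu2013} and only used as an ingredient. So your argument must stand on its own, and it does not: the central evaluation step is not merely ``delicate bookkeeping'' --- it is invalid. Your reduction to $f(x)=1/(cx;q)_\infty$ and your $f\equiv 1$ sanity check are both correct, but the interchange of the $n$- and $k$-summations destroys the identity. After interchanging, the inner sum is
\[
S_k=\sum_{n\ge k}\frac{(1-\alpha q^{2n})\,(\alpha,q/a;q)_n}{(1-\alpha)\,(q,\alpha a;q)_n}\Bigl(\frac{a}{q}\Bigr)^{n}(q^{-n},\alpha q^{n};q)_k ,
\]
and since $(q^{-n};q)_k\sim(-1)^kq^{k(k-1)/2}q^{-nk}$ while $(\alpha q^{n};q)_k\to 1$, the ratio of consecutive terms of $S_k$ tends to $aq^{-(k+1)}$; equivalently, after your shift $n\mapsto n+k$ the very-well-poised series you want Rogers' formula for has argument $aq^{-k-1}$ and parameters $(\alpha q^{2k},aq^{-k})$. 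Hence $S_k$ \emph{diverges} for every $k$ with $q^{k+1}\le|a|$, i.e.\ for all but finitely many $k$ whenever $a\ne 0$, so the double series is not absolutely convergent and no ratio-test estimate can license the swap. Worse, for the finitely many $k$ with $|a|<q^{k+1}$, where $S_k$ does converge, one has the classical vanishing identity
\[
\sum_{n=0}^{\infty}\frac{1-\alpha q^{2n}}{1-\alpha}\,\frac{(\alpha,q/a;q)_n}{(q,\alpha a;q)_n}\Bigl(\frac{a}{q}\Bigr)^{n}=0,\qquad |a|<q,
\]
which is exactly what your ``suitable limiting form'' of Proposition~\ref{rogerspp} produces if carried out carefully (in the limit $b\to 0$, $c\to\infty$ the product side of Rogers' formula tends to $0$, not to a nonzero product). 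So the interchanged expression is a sum of zeros and divergent terms and can never reproduce the nonzero left-hand side. The value of the right-hand side of Theorem~\ref{newliuthm} rests entirely on performing the finite, heavily cancellative $k$-sum first --- precisely the phenomenon the paper confronts in the proof of Theorem~\ref{rogersliuthm}, where the inner terminating series, whose individual terms are of size $q^{-nk}$, are shown via Watson's transformation and Tannery's theorem to grow only like $\beta^{n}$. Your fallback of recognizing the double sum ``directly'' as \reff{wweqn} fails for a related reason: \reff{wweqn} is a terminating, single-sum transformation, and its nonterminating analogue involves a combination of \emph{two} balanced ${}_4\phi_3$ series.

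The strategy can be repaired, but it becomes a different proof. Establish the identity first for $a=q^{M+1}$, $M=0,1,2,\dots$: then $(q/a;q)_n=(q^{-M};q)_n$ makes every sum terminate, the interchange is a finite rearrangement, and the terminating summations ($q$-Chu--Vandermonde, terminating Rogers/Watson) legitimately evaluate the inner sums. Next observe that for $f(x)=1/(cx;q)_\infty$ both sides are analytic functions of $a$ in a neighborhood of $a=0$ and agree on the sequence $q^{M+1}\to 0$, so the identity theorem gives general $a$; coefficient extraction in $c$ then yields the monomial cases and, by linearity, general $f$. That analytic-continuation step (or, alternatively, the paper's own $q$-partial-differential-equation method via Proposition~\ref{qppmotivation}, which is how every other identity in the paper is derived) is the missing idea; as written, your evaluation operates on series that either diverge or vanish identically.
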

The main aim of this section is using the above theorem and Proposition~\ref{qppmotivation} to prove
the following transformation formula for terminating $q$-series.
\begin{thm}\label{newliuthma} If $m$ is an nonnegative integer and  $\{A_n\}$ is a arbitrary complex sequence,
then, we have
\begin{align*}
&\frac{(\alpha q, \alpha ab/q; q)_m}{(\alpha a, \alpha b; q)_m}
\sum_{n=0}^m \frac{(q^{-m}, q/a, q/b; q)_nq^n}{(q^2/{\alpha ab q^m}; q)_n}A_n\\
&=\sum_{n=0}^m \frac{(1-\alpha q^{2n})(q^{-m}, \alpha, q/a, q/b; q)_n(\alpha ab q^{m-1})^n}
{(1-\alpha)(q, \alpha q^{m+1}, \alpha a, \alpha b; q)_n}
\sum_{k=0}^n (q^{-n}, \alpha q^n; q)_k q^k A_k.
\end{align*}
\end{thm}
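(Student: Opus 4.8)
The plan is to exploit that both sides of Theorem~\ref{newliuthma} are \emph{linear} in the sequence $\{A_n\}$ and that both sums terminate at $n=m$, because $(q^{-m};q)_n=0$ for $n>m$. Each side is therefore a linear functional on the $(m+1)$-dimensional space of tuples $(A_0,\ldots,A_m)$, so it suffices to verify the identity on any family of sequences whose tuples span $\mathbb{C}^{m+1}$. The decisive step is choosing a family that both spans and reduces the two sides to a known summation; I would take the two-parameter family
\[
A_n=\frac{(\alpha cd/q;q)_n}{(q,\alpha c,\alpha d;q)_n},
\]
with $c,d$ free, since it converts the left-hand side into a recognizable basic hypergeometric series.

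Indeed, for this choice the left-hand side, after merging the $q$-shifted factorials, becomes
\[
\frac{(\alpha q,\alpha ab/q;q)_m}{(\alpha a,\alpha b;q)_m}\,
{_4\phi_3}\left({{q^{-m},q/a,q/b,\alpha cd/q}\atop{\alpha c,\alpha d,q^2/(\alpha ab q^m)}};q,q\right),
\]
which is exactly the left-hand side of Watson's $q$-analogue of Whipple's theorem, equation \reff{wweqn}. On the right-hand side the inner sum turns into the \emph{balanced} terminating series ${_3\phi_2}\left({{q^{-n},\alpha q^n,\alpha cd/q}\atop{\alpha c,\alpha d}};q,q\right)$, which I would evaluate by the $q$-Pfaff--Saalschütz summation and then simplify using the elementary reversal $(xq^{-n};q)_n=(-x)^nq^{-n(n+1)/2}(q/x;q)_n$ applied to the factors $(cq^{-n};q)_n$ and $(q^{1-n}/(\alpha d);q)_n$. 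This collapses the inner sum to the clean closed form $\frac{(q/c,q/d;q)_n}{(\alpha c,\alpha d;q)_n}(\alpha cd/q)^n$. Substituting it back and combining the powers via $(\alpha ab q^{m-1})^n(\alpha cd/q)^n=(\alpha^2abcdq^m/q^2)^n$, together with $(q\sqrt{\alpha},-q\sqrt{\alpha};q)_n/(\sqrt{\alpha},-\sqrt{\alpha};q)_n=(1-\alpha q^{2n})/(1-\alpha)$, turns the right-hand side into precisely the ${_8\phi_7}$ on the right of \reff{wweqn}. Hence Watson's theorem gives the identity for the entire family.

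The step I expect to be the genuine obstacle is the reduction principle itself: one must argue that proving the identity on this family proves it for every $\{A_n\}$. For that I would verify that the tuples span $\mathbb{C}^{m+1}$. This is cleanest on the sub-family $d=0$, where $A_n=1/(q,\alpha c;q)_n$: the functions $c\mapsto 1/(\alpha c;q)_n$ for $n=0,1,\ldots,m$ are linearly independent, because $1/(\alpha c;q)_n$ has a pole at $\alpha c=q^{-(n-1)}$ absent from all earlier members, so already the tuples $\left(1/(q,\alpha c;q)_n\right)_{n=0}^m$ span $\mathbb{C}^{m+1}$ as $c$ varies, and \emph{a fortiori} so does the full family. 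By linearity the identity then holds for all sequences $\{A_n\}$.

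Finally, I would observe that the two externally invoked ingredients, Watson's theorem \reff{wweqn} and the Pfaff--Saalschütz evaluation, can be replaced by the expansion machinery of this paper: treating the relevant sums as analytic functions of $c$ and $d$, checking that they satisfy $\partial_{q,c}\{f\}=\partial_{q,d}\{f\}$, and applying Proposition~\ref{qppmotivation} yields an expansion in the Rogers--Szeg\H{o} polynomials $h_n(c,d|q)$ whose coefficients are pinned down by setting $d=0$ and using Theorem~\ref{newliuthm}. This reroutes the computation through the tools announced for the section while leaving the linearity-and-spanning skeleton of the argument unchanged.
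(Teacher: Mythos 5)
Your proposal is correct, but it takes a genuinely different route from the paper. The paper first proves Lemma~\ref{liulem} by choosing a terminating sum as the function $f(x)$ in Theorem~\ref{newliuthm}, then specializes $b=q^{m+1}$, and finally upgrades that special case to the full theorem by a $q$-partial differential equation argument: it constructs two functions $f(a,b)$ and $g(a,b)$ satisfying $\partial_{q^{-1},a}\{\cdot\}=\partial_{q^{-1},b}\{\cdot\}$, expands both via Proposition~\ref{qppmotivation}, and matches coefficients at $b=0$. You instead exploit linearity in $(A_0,\dots,A_m)$ and verify the identity on the spanning family $A_n=(\alpha cd/q;q)_n/(q,\alpha c,\alpha d;q)_n$, where the left side becomes the ${}_4\phi_3$ of \reff{wweqn} and the inner sum on the right collapses by $q$-Pfaff--Saalsch\"utz to $\frac{(q/c,q/d;q)_n}{(\alpha c,\alpha d;q)_n}(\alpha cd/q)^n$, turning the right side into the ${}_8\phi_7$ of \reff{wweqn}; your pole argument for linear independence of $c\mapsto 1/(\alpha c;q)_n$ is sound (with the minor point that Watson's theorem at $d=0$ is a limiting case, or equivalently one invokes closedness of finite-dimensional subspaces to pass from $d\neq 0$ to $d=0$). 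Each approach buys something different. Yours is shorter and structurally illuminating: it shows Theorem~\ref{newliuthma} is actually \emph{equivalent} to Watson's classical theorem, modulo linear algebra, and it needs only classical inputs from Gasper--Rahman. The paper's proof, by contrast, is independent of Watson's theorem, which is essential to its narrative: Section~10 derives Watson's theorem (Propositions~\ref{Exwwpp} and \ref{wwpp}) \emph{from} Theorem~\ref{newliuthma}, and that deduction would become circular --- a mere consistency check rather than a new proof of Watson --- if your argument were adopted as the proof of the theorem. Note also that your chosen family is precisely the specialization the paper uses in the other direction ($A_k=(\beta,\gamma;q)_k z^k/(q,c,d,h;q)_k$ with $h=\gamma=0$, $z=1$, then $(c,d,\beta)\mapsto(\alpha c,\alpha d,\alpha cd/q)$), so your proof is in effect the observation that this specialization is invertible on the space of terminating sequences.
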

To prove Theorem~\ref{newliuthma}, we first prove the following lemma by using Theorem~\ref{newliuthm}.
\begin{lem} \label{liulem} For any nonnegative integer $m$ and arbitrary complex sequence $\{A_n\}$, we have
 \begin{align*}
 &\frac{(\alpha q, \alpha ab/q; q)_\infty}
{(\alpha a, \alpha b; q)_\infty} \sum_{n=0}^m A_n (q^{-m}, q/a; q)_n (\alpha a q^m)^n\\
&=\sum_{n=0}^\infty \frac{(1-\alpha q^{2n})(\alpha, q/a, q/b; q)_n (-\alpha ab/q)^n q^{n(n-1)/2}}
{(1-\alpha) (q, \alpha a, \alpha b; q)_n}\\
&\quad \times \sum_{l=0}^n \frac{(q^{-m}, q^{-n}, \alpha q^n; q)_l}{(q/b; q)_l}
\left(\frac{q^{m+2}}{b}\right)^l A_l.
 \end{align*}
 \end{lem}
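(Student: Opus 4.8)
The plan is to apply Theorem~\ref{newliuthm} to a carefully chosen \emph{polynomial} $f$, so that its left-hand side reproduces the left-hand side of the lemma while its right-hand side, after one summation is carried out, reproduces the right-hand side of the lemma. Motivated by the finite sum $\sum_{n=0}^m A_n(q^{-m},q/a;q)_n(\alpha aq^m)^n$ appearing on the left, I would set
\[
f(x)=\sum_{l=0}^{m} A_l\,(q^{-m};q)_l\,q^{ml}\prod_{j=1}^{l}\bigl(x-\alpha q^{j}\bigr),
\]
which is a polynomial of degree at most $m$ and hence analytic near $x=0$. Two evaluations of $f$ drive the whole argument. Since $\prod_{j=1}^{l}(\alpha a-\alpha q^{j})=(\alpha a)^l(q/a;q)_l$, one gets $f(\alpha a)=\sum_{l=0}^{m}A_l(q^{-m},q/a;q)_l(\alpha aq^m)^l$, so that $\frac{(\alpha q,\alpha ab/q;q)_\infty}{(\alpha a,\alpha b;q)_\infty}f(\alpha a)$ is exactly the left-hand side of the lemma. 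At the nodes $x=\alpha q^{k+1}$, using $\prod_{j=1}^{l}(\alpha q^{k+1}-\alpha q^{j})=(\alpha q^{k+1})^l(q^{-k};q)_l$, one obtains the clean closed form $f(\alpha q^{k+1})=\sum_{l=0}^{m}A_l(q^{-m},q^{-k};q)_l(\alpha q^{m+k+1})^l$.

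With this $f$ the left side of Theorem~\ref{newliuthm} becomes the left side of the lemma, so it remains only to transform the right side. I would substitute the closed form for $f(\alpha q^{k+1})$ into the inner $k$-sum of Theorem~\ref{newliuthm} and interchange the two finite summations (legitimate because $f$ is a polynomial and the $A$-sum terminates at $l=m$). For each outer index $n$ this produces $\sum_{l=0}^{m}A_l(q^{-m};q)_l(\alpha q^{m+1})^l$ times an inner $k$-sum, and the key simplification is the elementary reduction $(q^{-k};q)_l\,q^{kl}=(-1)^l q^{l(l-1)/2}(q;q)_k/(q;q)_{k-l}$. This cancels the $(q;q)_k$ in the denominator; after the shift $k\mapsto k+l$ the $k$-sum becomes a terminating ${}_2\phi_1(q^{l-n},\alpha q^{l+n};\alpha bq^{l};q,q)$, which is summed in closed form by the $q$-Chu--Vandermonde formula to give $\frac{(bq^{-n};q)_{n-l}}{(\alpha bq^{l};q)_{n-l}}(\alpha q^{l+n})^{n-l}$.

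The decisive final step is the reorganization. I would use $(\alpha bq^{l};q)_{n-l}=(\alpha b;q)_n/(\alpha b;q)_l$ and the reversal identity
\[
(bq^{-n};q)_{n-l}=(-b)^{n-l}\,q^{-n(n-l)+(n-l)(n-l-1)/2}\,\frac{(q/b;q)_n}{(q/b;q)_l}
\]
to collapse all the $q$-powers and $q$-shifted factorials. A bookkeeping of the exponents of $q$, $\alpha$ and $b$ then shows that the $n$-th term produced by Theorem~\ref{newliuthm} equals $\frac{(q/b;q)_n(-\alpha b)^n q^{n(n-1)/2}}{(\alpha b;q)_n}$ times the $n$-th term of the lemma's right-hand side; since this prefactor is precisely the ratio of the outer coefficient $\frac{(1-\alpha q^{2n})(\alpha,q/a,q/b;q)_n(-\alpha ab/q)^n q^{n(n-1)/2}}{(1-\alpha)(q,\alpha a,\alpha b;q)_n}$ to $\frac{(1-\alpha q^{2n})(\alpha,q/a;q)_n(a/q)^n}{(1-\alpha)(q,\alpha a;q)_n}$, the two expansions agree term by term in $n$ and summing over $n\ge 0$ yields the lemma.

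I expect the main obstacle to be exactly this exponent accounting: one must verify that the scattered $q$-powers $q^{(m+1)l}$, $q^{l(l-1)/2}$, $q^{l}$, $q^{n^2-l^2}$, $q^{-n(n-l)}$ and $q^{(n-l)(n-l-1)/2}$ combine to the required $q^{n(n-1)/2+(m+2)l}$, and that the factorials reassemble into $(q^{-m},q^{-n},\alpha q^{n};q)_l/(q/b;q)_l$ with the clean weight $(q^{m+2}/b)^l$. Everything else is routine: the interchange of summation is justified by finiteness, the $q$-Chu--Vandermonde evaluation is standard, and the ``suitable convergence conditions'' of Theorem~\ref{newliuthm} are automatic here because $f$ is a polynomial.
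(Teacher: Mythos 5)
Your proposal is correct and follows essentially the same route as the paper: your polynomial $f(x)=\sum_{l=0}^m A_l(q^{-m};q)_l q^{ml}\prod_{j=1}^l(x-\alpha q^j)$ is exactly the paper's choice $f(x)=\sum_{l=0}^m A_l(q^{-m},q\alpha/x;q)_l(q^mx)^l$ written in product form, and the subsequent steps (evaluation at $\alpha a$ and at the nodes $\alpha q^{k+1}$, interchange of the finite sums, the reduction $(q^{-k};q)_l(q;q)_{k-l}=(-1)^l(q;q)_kq^{l(l-1)/2-kl}$, the shift $k\mapsto k+l$, the $q$-Chu--Vandermonde evaluation, and the final reassembly of $q$-powers) mirror the paper's proof step for step. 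The exponent bookkeeping you flag as the main obstacle does work out, exactly as in the paper's passage from its equation for the inner sum to the stated lemma.
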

\begin{proof}
Suppose that $m$ is an nonnegative integer and $\{A_n\}$ is a arbitrary complex sequence.
In Theorem \ref{newliuthm},  we can choose $f(x)$ as follows
\[
f(x)=\sum_{l=0}^m  A_l (q^{-m}, q\alpha/x; q )_l (q^m x)^l.
\]
Letting $x=\alpha a$ in the above equation, we immediately find that
\begin{equation}
f(\alpha a)=\sum_{l=0}^m  A_l (q^{-m}, q/a; q )_l (\alpha a q^m)^l.
\label{teqn1}
\end{equation}
It is easy to check that $(q^{-k}; q)_l=0$ for $l>k.$ Thus we at once deduce that
\begin{align*}
f(\alpha q^{k+1})=\sum_{l=0}^m A_l (q^{-m}, q^{-k}; q)_l (\alpha q^{m+k+1})^l
=\sum_{l=0}^k A_l (q^{-m}, q^{-k}; q)_l (\alpha q^{m+k+1})^l.
\end{align*}
It follows that
\[
\sum_{k=0}^n \frac{(q^{-n}, \alpha q^n; q)_k}{(q, \alpha b; q)_k} f(\alpha q^{k+1})
=\sum_{k=0}^n \frac{(q^{-n}, \alpha q^n; q)_k}{(q, \alpha b; q)_k}\sum_{l=0}^k A_l (q^{-m}, q^{-k}; q)_l (\alpha q^{m+k+1})^l.
\]
Interchanging the order of the summation on the right-hand side of the above equation,
 we find that the right-hand side of the above equation becomes
 \[
 \sum_{l=0}^n A_l (q^{-m}; q)_l (\alpha q^{m+1})^l \sum_{k=l}^n \frac{(q^{-n}, \alpha q^n; q)_k (q^{-k}; q)_l}
 {(q, \alpha b; q)_k} q^{k(l+1)}.
 \]
Using $(q^{-k}; q)_l (q; q)_{k-l}=(-1)^l (q; q)_k q^{l(l-1)/2-kl}$, we
find that the above equation becomes
\[
 \sum_{l=0}^n A_l (q^{-m}; q)_l (-\alpha q^{m+1})^l q^{l(l-1)/2}\sum_{k=l}^n \frac{(q^{-n}, \alpha q^n; q)_k q^k }
 {(\alpha b; q)_k (q; q)_{k-l}}.
 \]
 Making the variable change  $k-l=j$ in the above equation,  we deduce  that
 \begin{equation}
 \sum_{l=0}^n A_l \frac{(q^{-n}, q^{-m}; q)_l}{(\alpha b; q)_l} (-\alpha q^{m+1})^l q^{l(l+1)/2}
 \sum_{j=0}^{n-l} \frac{(q^{-n+l}, \alpha q^{n+l}; q)_j q^j }
 {(\alpha b q^l; q)_j (q; q)_{j}}.
 \label{teqn2}
 \end{equation}
 Using the $q$-Chu-Vandermonde summation formula, we find that the inner summation of the above equation equals
 \[
 \frac{(bq^{-n}; q)_{n-l} (\alpha q^{n+l})^{n-l}}{(\alpha b q^l; q)_{n-l}}
 =(-\alpha b)^{n-l} q^{n(n-1)/2-l(l-1)/2} \frac{(q/b; q)_n (\alpha b; q)_l}{(q/b; q)_l (\alpha b; q)_n}.
 \]
 Substituting this equation into (\ref{teqn2}), we conclude that
 \begin{align}
 &\sum_{k=0}^n \frac{(q^{-n}, \alpha q^n; q)_k}{(q, \alpha b; q)_k} f(\alpha q^{k+1})
 \label{teqn3}\\
&= \frac{(q/b; q)_n}{(\alpha b; q)_n} (-\alpha b; q)^n q^{n(n-1)/2}
 \sum_{l=0}^n \frac{(q^{-n}, q^{-m}, \alpha b q^l; q)_l}{(q/b; q)_l}
 \left(\frac{q^{m+2}}{b}\right)^l A_l.\nonumber
 \end{align}
 Substituting the above equation and  equation (\ref{teqn1}) into Theorem \ref{newliuthm}, we complete
 the proof of Lemma~\ref{liulem}.
\end{proof}
Now we begin to prove Theorem~\ref{newliuthma} by using the above lemma and Proposition~\ref{qppmotivation}.
\begin{proof}
Setting $b=q^{m+1}$ in Lemma~\ref{liulem} and then replacing $A_n$ by $A_n (q/b; q)_n$, we immediately deduce that
\begin{align}
&\frac{(\alpha q; q)_m}{(\alpha a; q)_m} \sum_{n=0}^{m} A_n (q^{-m},  q/a, q/b; q)_n (\alpha a q^m)^n
\label{teqn4}\\
=\sum_{n=0}^m & \frac{(1-\alpha q^{2n}) (q^{-m}, \alpha, q/a; q)_n (-\alpha aq^{m})^n q^{n(n-1)/2}}
{(1-\alpha)(q, \alpha a, \alpha q^{m+1}; q)_n}B_n,\nonumber
\end{align}
where $B_n$ is defined as
\[
B_n=\sum_{k=0}^n (q^{-n}, \alpha q^n; q)_k q^k A_k.
\]
For simplicity, we temporarily introduce $f_{m,n}(a, b), f(a, b)$ and $g(a, b)$ as follows
\begin{align*}
&f_{m, n} (a, b)=\frac{(aq^{-n}, bq^{-n}, \alpha aq^m, \alpha b q^m; q)_\infty}
{(\alpha ab q^{m-n-1}; q)_\infty},\\
&f(a, b)=\sum_{n=0}^m  \frac{(1-\alpha q^{2n}) (q^{-m}, \alpha; q)_n \alpha^n q^{n^2+mn}}
{(1-\alpha) (q, \alpha q^{m+1}; q)_n}f_{n, n} (a, b)B_n,\\
&g(a, b)=(\alpha q; q)_m \sum_{n=0}^{m} (q^{-m}; q)_n (-\alpha q^m)^n q^{n(n+1)/2}f_{m, n}(a, b)A_n.
\end{align*}
It is easily seen that
\begin{align*}
f(a, 0)&=(a, \alpha a; q)_\infty \sum_{n=0}^m \frac{(1-\alpha q^{2n}) (q^{-m}, \alpha, q/a; q)_n (-\alpha aq^{m})^n q^{n(n-1)/2}}
{(1-\alpha)(q, \alpha a, \alpha q^{m+1}; q)_n}B_n,\\
g(a, 0)&=(a, \alpha a; q)_\infty  \frac{(\alpha q; q)_m}{(\alpha a; q)_m} \sum_{n=0}^{m} A_n (q^{-m},  q/a; q)_n (\alpha a q^m)^n.
\end{align*}
Combining the above two equations and (\ref{teqn4}), we find the identity, $f(a, 0)=g(a, 0).$

By a direct computation, we find that
\[
\partial_{q^{-1}, a} \{f_{m, n}(a, b)\}=\partial_{q^{-1}, b} \{f_{m, n}(a, b)\}=\frac{(1+\alpha q^{m+n}-\alpha(a+b)q^{m-1})f_{m, n} (a, b)}{q^{n+1}(1-\alpha ab q^{m-n-2})}.
\]
It follows that
\[
\partial_{q^{-1}, a} \{f(a, b)\}=\partial_{q^{-1}, b} \{f(a, b)\}, ~\text{and}~
\partial_{q^{-1}, a} \{g(a, b)\}=\partial_{q^{-1}, b} \{g(a, b)\}.
\]
Thus, by (ii) in Proposition~\ref{qppmotivation}, there exist two sequences $\{\beta_l\}$ and $\{\gamma_l\}$
independent of $a$ and $b$ such that
\begin{align*}
f(a, b)=\sum_{l=0}^\infty \beta_l h_l(a, b|q),\quad
g(a, b)=\sum_{l=0}^\infty \gamma_l h_l(a, b|q).
\end{align*}
Setting $b=0$ in the above equations and  using $f(a, 0)=g(a, 0),$ we find that $\beta_l=\gamma_l$ for any nonnegative
integer $l$. It follows that $f(a, b)=g(a, b),$ which gives
\begin{align*}
&\frac{(\alpha q; q)_m}{(\alpha a, \alpha b; q)_m}\sum_{n=0}^m (q^{-m}, q/a, q/b; q)_n q^{-n(n+1)/2}(-\alpha ab q^m)^n (\alpha ab/q; q)_{m-n}A_n\\
&=\sum_{n=0}^m \frac{(1-\alpha)(q^{-m}, \alpha, q/a, q/b; q)_n(\alpha ab q^{m-1})^n}{(1-\alpha)(q, \alpha q^{m+1}, \alpha a, \alpha b; q)_n}B_n.
\end{align*}
From the definition of the $q$-shifted factorial and by a direct computation, we find that
\[
(\alpha ab/q; q)_{m-n}= \frac{(-1)^nq^{n(n+1)/2}(\alpha ab/q; q)_m}{(q^2/\alpha ab q^m; q)_n (\alpha ab q^{m-1})^n}.
\]
Combining the above two equations, we finish the proof of Theorem~\ref{newliuthma}.
\end{proof}
\section{An extension of  Watson's $q$-analog of Whipple's theorem  }
Taking $A_k=(\beta, \gamma; q)_kz^k/(q, c, d, h; q)_k$ in Theorem~\ref{newliuthma},
we immediately obtain the following theorem.
\begin{thm} \label{newliuthmb} For any nonnegative integer $m$, we have the $q$-formula
\begin{align*}
&\frac{(\alpha q, \alpha ab/q; q)_m}{(\alpha a, \alpha b; q)_m}
{_5\phi_4}\left({{q^{-m}, q/a, q/b, \beta, \gamma }\atop{q^2/{\alpha ab q^m, c,  d, h}}}; q, qz\right)\\
=&\sum_{n=0}^m \frac{(1-\alpha q^{2n})(q^{-m}, \alpha, q/a, q/b; q)_n(\alpha ab q^{m-1})^n}
{(1-\alpha)(q, \alpha q^{m+1}, \alpha a, \alpha b; q)_n}
{_4\phi_3}\({{q^{-n}, \alpha q^n, \beta, \gamma}\atop{c, d, h}}; q, qz\).
\end{align*}
\end{thm}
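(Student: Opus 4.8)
The plan is to obtain Theorem~\ref{newliuthmb} as a pure specialization of Theorem~\ref{newliuthma}, introducing no new analysis; the entire task is to recognize two basic hypergeometric series inside the formula of Theorem~\ref{newliuthma} once we insert the sequence
\[
A_k=\frac{(\beta, \gamma; q)_k z^k}{(q, c, d, h; q)_k}.
\]
First I would substitute this choice into the left-hand side of Theorem~\ref{newliuthma}. The summand there already carries the factors $(q^{-m}, q/a, q/b; q)_n q^n$ and the denominator $(q^2/{\alpha ab q^m}; q)_n$; multiplying by $A_n$ and collecting $q^n z^n=(qz)^n$ merges everything into
\[
\sum_{n=0}^m \frac{(q^{-m}, q/a, q/b, \beta, \gamma; q)_n}{(q, q^2/{\alpha ab q^m}, c, d, h; q)_n}(qz)^n,
\]
which is exactly the terminating ${_5\phi_4}$ on the left of Theorem~\ref{newliuthmb}. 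It is worth noting that the $(q; q)_n$ appearing in the denominator of the hypergeometric series is precisely the $(q; q)_n$ hidden inside $A_n$, and that for ${_5\phi_4}$ the exponent $1+s-r$ in the definition of ${_r\phi_s}$ equals zero, so no extra powers of $(-1)^n q^{n(n-1)/2}$ arise.

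Next I would treat the inner sum $\sum_{k=0}^n (q^{-n}, \alpha q^n; q)_k q^k A_k$ on the right-hand side of Theorem~\ref{newliuthma} in the same manner. Substituting $A_k$ and again gathering $q^k z^k=(qz)^k$ converts this inner sum into
\[
\sum_{k=0}^n \frac{(q^{-n}, \alpha q^n, \beta, \gamma; q)_k}{(q, c, d, h; q)_k}(qz)^k
={_4\phi_3}\({{q^{-n}, \alpha q^n, \beta, \gamma}\atop{c, d, h}}; q, qz\),
\]
which is precisely the ${_4\phi_3}$ factor on the right of Theorem~\ref{newliuthmb}, while the outer coefficient
\[
\frac{(1-\alpha q^{2n})(q^{-m}, \alpha, q/a, q/b; q)_n(\alpha ab q^{m-1})^n}{(1-\alpha)(q, \alpha q^{m+1}, \alpha a, \alpha b; q)_n}
\]
is copied over unchanged. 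Assembling the two sides then gives the stated identity.

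I do not anticipate any real obstacle here, since the substitution is a formal one valid for an arbitrary complex sequence $\{A_n\}$, and both sides remain finite sums (the left being terminated by $q^{-m}$ and each inner sum on the right by $q^{-n}$), so no convergence question intervenes. The only point needing a little care is the bookkeeping: one must check that the factor $q^n z^n$ (respectively $q^k z^k$) combines to the base argument $qz$ rather than $z$, and that the $(q; q)_n$ (respectively $(q; q)_k$) required to normalize the ${_5\phi_4}$ (respectively ${_4\phi_3}$) is exactly the one supplied by $A$. Once these matchings are verified, Theorem~\ref{newliuthmb} follows at once from Theorem~\ref{newliuthma}.
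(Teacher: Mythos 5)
Your proposal is correct and is exactly the paper's own argument: the paper obtains Theorem~\ref{newliuthmb} by the very same substitution $A_k=(\beta,\gamma;q)_k z^k/(q,c,d,h;q)_k$ into Theorem~\ref{newliuthma}, and your bookkeeping (the base $(qz)^n$, the absorbed $(q;q)_n$, and the vanishing exponent $1+s-r=0$ so no extra $(-1)^nq^{n(n-1)/2}$ factors appear) checks out.
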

If we take $h=\gamma=0$ and $z=1$ in the above equation, we immediately deduce the
following proposition.
\begin{prop}\label{Exwwpp} For any nonnegative integer $m$, then, we have
\begin{align*}
&\frac{(\alpha q, \alpha ab/q; q)_m}{(\alpha a, \alpha b; q)_m}
{_4\phi_3}\left({{q^{-m}, q/a, q/b, \beta}\atop{q^2/{\alpha ab q^m, c,  d}}}; q, q\right)\\
=&\sum_{n=0}^m \frac{(1-\alpha q^{2n})(q^{-m}, \alpha, q/a, q/b; q)_n(\alpha ab q^{m-1})^n}
{(1-\alpha)(q, \alpha q^{m+1}, \alpha a, \alpha b; q)_n}
{_3\phi_2}\({{q^{-n}, \alpha q^n, \beta}\atop{c, d}}; q, q\).
\end{align*}
\end{prop}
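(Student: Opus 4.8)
The plan is to derive Proposition~\ref{Exwwpp} as nothing more than a specialization of Theorem~\ref{newliuthmb}, obtained by setting $h=\gamma=0$ and $z=1$. The observation that makes this immediate is that every basic hypergeometric series occurring in Theorem~\ref{newliuthmb} is of the form ${_r\phi_{r-1}}$ (namely ${_5\phi_4}$ on the left and ${_4\phi_3}$ on the right), so that in the defining formula for ${_r\phi_s}$ the exponent $1+s-r$ vanishes and the factor $\left((-1)^k q^{k(k-1)/2}\right)^{1+s-r}$ is identically $1$. Consequently each such series is simply $\sum_{k\ge 0}\frac{(\text{upper})_k}{(q;q)_k(\text{lower})_k}(qz)^k$, and reducing the number of parameters by one on top and one on the bottom has no hidden effect on the argument. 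Moreover both series terminate because of the factors $(q^{-m};q)_k$ and $(q^{-n};q)_k$, so the identity is a finite rational relation in the free parameters and evaluation at $h=\gamma=0$, $z=1$ is automatically legitimate.

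First I would treat the left-hand side. Putting $\gamma=0$ replaces the top factor $(\gamma;q)_k$ by $(0;q)_k$, and putting $h=0$ replaces the bottom factor $(h;q)_k$ by $(0;q)_k$. Since $(0;q)_k=\prod_{j=0}^{k-1}(1-0\cdot q^j)=1$ for every $k\ge 0$, these two factors disappear without creating any pole, and with $z=1$ the argument $qz$ becomes $q$. Hence the ${_5\phi_4}$ collapses to
\[
{_4\phi_3}\left({{q^{-m}, q/a, q/b, \beta}\atop{q^2/{\alpha ab q^m, c,  d}}}; q, q\right),
\]
which is precisely the series on the left of Proposition~\ref{Exwwpp}; the prefactor $(\alpha q, \alpha ab/q; q)_m/(\alpha a, \alpha b; q)_m$ is untouched.

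Next I would treat the right-hand side. The summation runs over $n=0,\dots,m$ and the coefficient
\[
\frac{(1-\alpha q^{2n})(q^{-m}, \alpha, q/a, q/b; q)_n(\alpha ab q^{m-1})^n}{(1-\alpha)(q, \alpha q^{m+1}, \alpha a, \alpha b; q)_n}
\]
involves none of $\gamma$, $h$, or $z$, so it is carried over verbatim. The inner series ${_4\phi_3}\!\left({{q^{-n}, \alpha q^n, \beta, \gamma}\atop{c, d, h}}; q, qz\right)$ is handled exactly as before: killing $\gamma$ on top, $h$ on the bottom, and setting $z=1$ turns it into ${_3\phi_2}\!\left({{q^{-n}, \alpha q^n, \beta}\atop{c, d}}; q, q\right)$. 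Reassembling the two specialized sides reproduces the claimed identity term by term.

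Since the computation is a pure specialization, there is no genuine analytic obstacle; the only point demanding any care is notational. One must check that the simultaneous vanishing of one numerator and one denominator parameter really sends ${_r\phi_{r-1}}$ to ${_{r-1}\phi_{r-2}}$ while preserving the base argument---i.e.\ that no stray power of $q$ or sign $(-1)^k$ is generated---and that letting $h\to 0$ does not divide by zero. Both are settled at once by the identity $(0;q)_k=1$ together with the vanishing of the exponent $1+s-r$ for all the series involved, so the proposition follows immediately.
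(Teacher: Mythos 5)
Your proposal is correct and follows exactly the paper's own route: the paper obtains Proposition~\ref{Exwwpp} precisely by setting $h=\gamma=0$ and $z=1$ in Theorem~\ref{newliuthmb}, which is the specialization you perform. Your extra bookkeeping---that $(0;q)_k=1$ and that the exponent $1+s-r$ vanishes for every ${_r\phi_{r-1}}$ involved, so no stray signs or powers of $q$ appear---simply makes explicit what the paper treats as immediate.
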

This proposition contains Watson's $q$-analog of Whipple's theorem as a special case. Thus
we may regard it an extension of Watson's $q$-analog of Whipple's theorem.
\begin{prop}\label{wwpp} {\rm (Watson's $q$-analog of Whipple's theorem)}.
\begin{align*}
&\frac{(\alpha q, \alpha ab/q; q)_m}{(\alpha a, \alpha b; q)_m}
{_4\phi_3}\left({{q^{-m}, q/a, q/b, \alpha cd/q }\atop{\alpha c, \alpha d, q^2/{\alpha ab q^m}}}; q, q\right)\\
&={_8\phi_7}\({{q^{-m}, q\sqrt{\alpha}, -q\sqrt{\alpha}, \alpha, q/a, q/b, q/c, q/d}
\atop{\sqrt{\alpha}, -\sqrt{\alpha}, \alpha a, \alpha b, \alpha c, \alpha d, \alpha q^{m+1}}}; q, \frac{\alpha^2 abcdq^m}{q^2}\).
\end{align*}
\end{prop}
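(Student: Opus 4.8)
The plan is to derive Proposition~\ref{wwpp} from Proposition~\ref{Exwwpp} by a single specialization of the free parameters followed by one summation. In Proposition~\ref{Exwwpp} the lower parameters of the ${}_4\phi_3$ are two free quantities (call them $c'$ and $d'$, to avoid a clash with Watson's $c,d$) together with $q^2/\alpha abq^m$, and the fourth upper parameter is the free quantity $\beta$. First I would set
\[
\beta=\frac{\alpha cd}{q},\qquad c'=\alpha c,\qquad d'=\alpha d.
\]
With this choice the left-hand side of Proposition~\ref{Exwwpp} becomes, letter for letter, the ${}_4\phi_3$ on the left of Proposition~\ref{wwpp}, so no work is required there; everything happens on the right.

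After the specialization, the inner sum on the right of Proposition~\ref{Exwwpp} is
\[
{}_3\phi_2\left({{q^{-n},\,\alpha q^n,\,\alpha cd/q}\atop{\alpha c,\,\alpha d}};q,q\right),
\]
and the crucial observation is that this terminating series is balanced: $q^{1-n}(\alpha q^n)(\alpha cd/q)=\alpha^2cd=(\alpha c)(\alpha d)$, so the Saalsch\"utz balance condition holds exactly. I would therefore apply the $q$-Pfaff--Saalsch\"utz summation formula to evaluate it in closed form, obtaining a ratio of the shape $(cq^{-n},q/d;q)_n/(\alpha c,q^{1-n}/\alpha d;q)_n$. Using the reversal identity $(xq^{-n};q)_n=(-x)^nq^{-n(n+1)/2}(q/x;q)_n$ on the two factors carrying negative powers of $q$, the spurious powers $q^{-n(n+1)/2}$ and signs cancel, and the inner sum collapses to
\[
{}_3\phi_2\left({{q^{-n},\,\alpha q^n,\,\alpha cd/q}\atop{\alpha c,\,\alpha d}};q,q\right)=\left(\frac{\alpha cd}{q}\right)^{n}\frac{(q/c,\,q/d;q)_n}{(\alpha c,\,\alpha d;q)_n}.
\]

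Substituting this back and merging the two monomials via $(\alpha abq^{m-1})^n(\alpha cd/q)^n=(\alpha^2abcdq^m/q^2)^n$, the right-hand side of Proposition~\ref{Exwwpp} becomes
\[
\sum_{n=0}^m\frac{(1-\alpha q^{2n})(q^{-m},\alpha,q/a,q/b,q/c,q/d;q)_n}{(1-\alpha)(q,\alpha q^{m+1},\alpha a,\alpha b,\alpha c,\alpha d;q)_n}\left(\frac{\alpha^2abcdq^m}{q^2}\right)^{n}.
\]
To finish I would recognize this as the very-well-poised ${}_8\phi_7$ on the right of Proposition~\ref{wwpp} by inserting the standard well-poised factor $\frac{(q\sqrt{\alpha},-q\sqrt{\alpha};q)_n}{(\sqrt{\alpha},-\sqrt{\alpha};q)_n}=\frac{1-\alpha q^{2n}}{1-\alpha}$, which reproduces precisely the prefactor $(1-\alpha q^{2n})/(1-\alpha)$ and supplies the two pairs of numerator/denominator parameters $q\sqrt{\alpha},-q\sqrt{\alpha}$ and $\sqrt{\alpha},-\sqrt{\alpha}$ appearing in the ${}_8\phi_7$. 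I expect the only real obstacle to be the bookkeeping in the Saalsch\"utz step: one must choose correctly which of $\alpha c,\alpha d$ plays the role of the distinguished lower parameter and then apply the reversal identity to both of the factors $(cq^{-n};q)_n$ and $(q^{1-n}/\alpha d;q)_n$ so that the $q$-powers cancel and the monomial $(\alpha cd/q)^n$ emerges; all remaining manipulations are routine cancellations of $q$-shifted factorials.
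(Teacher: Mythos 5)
Your proposal is correct and follows essentially the same route as the paper: both specialize Proposition~\ref{Exwwpp} via $(c,d,\beta)\to(\alpha c,\alpha d,\alpha cd/q)$ and then evaluate the inner terminating ${}_3\phi_2$ by the $q$-Pfaff--Saalsch\"utz summation, after which the very-well-poised ${}_8\phi_7$ emerges from the factor $(1-\alpha q^{2n})/(1-\alpha)$. The only difference is that the paper simply quotes the Saalsch\"utz formula in the exact form needed, whereas you reconstruct it from the standard statement plus the reversal identity; this extra bookkeeping is correct but not a distinct method.
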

\begin{proof}
If $(c, d, \beta)$ is replaced by $(\alpha c, \alpha d, \alpha cd/q)$, we find that
the left-hand side of the equation in Proposition~\ref{Exwwpp} becomes
the left-hand side of the equation in Proposition~\ref{wwpp}, and
the right-hand side becomes
\[
\sum_{n=0}^m \frac{(1-\alpha q^{2n})(q^{-m}, \alpha, q/a, q/b; q)_n(\alpha ab q^{m-1})^n}
{(1-\alpha)(q, \alpha q^{m+1}, \alpha a, \alpha b; q)_n}
{_3\phi_2}\({{q^{-n}, \alpha q^n, \alpha cd/q}\atop{\alpha c, \alpha d}}; q, q\).
\]
Using the $q$-Pfaff-Saalsch\"{u}tz formula (see,  for example \cite[p. 13,  Eq. (1.7.2)]{Gas+Rah}),  we find that
\[
{_3 \phi_2} \left({{q^{-n}, \alpha q^n, \alpha cd/q}\atop{\alpha c, \alpha d}}; q,
q\right)=\frac{(q/c, q/d; q)_n}{(\alpha c, \alpha d; q)_n}\left(\frac{\alpha cd}{q}\right)^n.
\]
Combining the above two equations we arrive at the right-hand side of the equation in Proposition~\ref{wwpp}.
Thus we complete the proof of the proposition.
\end{proof}
Letting $m\to \infty$ in  Theorem \ref{newliuthma}, we immediately obtain the following theorem.
Many important applications of this theorem to mock-theta function identities have been discussed
in the paper~\cite{Liumocktheta}.
\begin{thm}\label{newliuthmc} For $|\alpha abz/q|<1$, we have the $q$-transformation formula
\begin{align*}
&\frac{(\alpha q, \alpha ab/q; q)_\infty}
{(\alpha a, \alpha b; q)_\infty} {_4\phi_3} \left({{q/a, q/b, \beta, \gamma} \atop { c, d, h}} ;  q, \frac{\alpha ab z}{q} \right) \\
&=\sum_{n=0}^\infty \frac{(1-\alpha q^{2n}) (\alpha, q/a, q/b; q)_n (-\alpha ab/q)^n q^{n(n-1)/2}}
{(1-\alpha)(q, \alpha a, \alpha b; q)_n}
{_4\phi_3} \left({{q^{-n}, \alpha q^n, \beta, \gamma} \atop {c, d, h}} ;  q, qz \right).
\end{align*}
\end{thm}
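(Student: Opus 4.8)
The plan is to derive Theorem~\ref{newliuthmc} as the limiting case $m\to\infty$ of Theorem~\ref{newliuthma}, specialized to the sequence $A_k=(\beta,\gamma;q)_kz^k/(q,c,d,h;q)_k$ that produces Theorem~\ref{newliuthmb}. With this choice the left side of Theorem~\ref{newliuthma} is precisely the ${}_5\phi_4$ of Theorem~\ref{newliuthmb}, while the inner sum $\sum_{k=0}^n(q^{-n},\alpha q^n;q)_kq^kA_k$ becomes the terminating ${}_4\phi_3$ with numerator parameters $q^{-n},\alpha q^n,\beta,\gamma$, denominator parameters $c,d,h$, and argument $qz$. The first step is the straightforward limit of the prefactor, $\lim_{m\to\infty}(\alpha q,\alpha ab/q;q)_m/(\alpha a,\alpha b;q)_m=(\alpha q,\alpha ab/q;q)_\infty/(\alpha a,\alpha b;q)_\infty$, which supplies the outer factor on the left of Theorem~\ref{newliuthmc}.

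The second step is the term-by-term passage to the limit on each side, for which the key asymptotic is $\lim_{m\to\infty}q^{mn}(q^{-m};q)_n=(-1)^nq^{n(n-1)/2}$ together with its analogue $\lim_{m\to\infty}q^{mn}(q^2/(\alpha abq^m);q)_n=(-1)^n(q^2/(\alpha ab))^nq^{n(n-1)/2}$ for the lower parameter. On the left, the $n$-th term of the ${}_5\phi_4$ carries the ratio $(q^{-m};q)_n/(q^2/(\alpha abq^m);q)_n$, whose limit is $(\alpha ab/q^2)^n$; multiplying by the base factor $(qz)^n$ collapses the argument to $(\alpha abz/q)^n$, so the ${}_5\phi_4$ tends termwise to the ${}_4\phi_3$ with argument $\alpha abz/q$ on the left of Theorem~\ref{newliuthmc}. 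On the right, for each fixed $n$ one has $(\alpha q^{m+1};q)_n\to1$, while $(q^{-m};q)_n(\alpha abq^{m-1})^n\to(-\alpha ab/q)^nq^{n(n-1)/2}$ by the same asymptotic; hence the weight converges to $(1-\alpha q^{2n})(\alpha,q/a,q/b;q)_n(-\alpha ab/q)^nq^{n(n-1)/2}/[(1-\alpha)(q,\alpha a,\alpha b;q)_n]$, exactly the coefficient displayed in Theorem~\ref{newliuthmc}, while the inner ${}_4\phi_3$ is unchanged because $n$ is held fixed.

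The main obstacle is rigor in the two limit interchanges, since the ${}_5\phi_4$ on the left and the outer sum on the right are both finite sums of length $m+1$ whose limits are genuine infinite series; I would therefore justify exchanging $\lim_m$ with the summation by Tannery's theorem (dominated convergence for series). On the right the Gaussian factor $q^{n(n-1)/2}$ supplies super-geometric decay in $n$ uniformly in $m$, so that interchange is essentially unconditional, the inner ${}_4\phi_3$ being a fixed terminating sum that is trivially bounded. On the left the hypothesis $|\alpha abz/q|<1$ is exactly what guarantees a summable geometric majorant for the general term of the ${}_5\phi_4$ uniform in $m$, and simultaneously the absolute convergence of the limiting ${}_4\phi_3$. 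Once both interchanges are licensed, equating the two families of termwise limits established above yields the identity of Theorem~\ref{newliuthmc}.
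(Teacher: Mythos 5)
Your strategy coincides with the paper's: the paper obtains Theorem~\ref{newliuthmc} precisely by letting $m\to\infty$ in Theorem~\ref{newliuthma} with the choice $A_k=(\beta,\gamma;q)_kz^k/(q,c,d,h;q)_k$ (i.e., in Theorem~\ref{newliuthmb}), and states the passage as immediate. Your termwise computations are all correct: the prefactor limit, $q^{mn}(q^{-m};q)_n\to(-1)^nq^{n(n-1)/2}$, the companion asymptotic for the lower parameter $q^2/(\alpha abq^m)$, $(\alpha q^{m+1};q)_n\to 1$, and the collapse of the argument $qz$ to $\alpha abz/q$ on the left.

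There is, however, a genuine flaw in your justification of the limit interchange on the right-hand side. You claim it is ``essentially unconditional'' because the Gaussian factor $q^{n(n-1)/2}$ decays super-geometrically while the inner ${}_4\phi_3$ is ``trivially bounded.'' The inner ${}_4\phi_3$ is a finite sum for each fixed $n$, but it is \emph{not} bounded uniformly in $n$: since $(q^{-n};q)_n=(-1)^nq^{-n(n+1)/2}(q;q)_n$, its $k=n$ term alone equals
\begin{equation*}
\frac{(q^{-n},\alpha q^n,\beta,\gamma;q)_n}{(q,c,d,h;q)_n}\,(qz)^n
=(-1)^n q^{-n(n-1)/2}z^n\,\frac{(\alpha q^n,\beta,\gamma;q)_n}{(c,d,h;q)_n},
\end{equation*}
which grows like $q^{-n(n-1)/2}|z|^n$ and exactly cancels the Gaussian decay you invoke. (A consistency check: if your claim were correct, the right-hand side would converge for every $z$, yet it equals a series that manifestly requires $|\alpha abz/q|<1$; in fact for $|\alpha abz/q|>1$ the terms of the right-hand side generically do not tend to zero.) The correct domination argument bounds the inner sum by $C\,q^{-n(n-1)/2}|z|^n$ uniformly in $n$ (reindexing by $j=n-k$, the terms decay from the $k=n$ term by factors $q^{j(j-1)/2}|z|^{-j}$, whose sum converges), so the $n$-th term of the right-hand side is $O\bigl(|\alpha abz/q|^n\bigr)$; thus Tannery's theorem applies on \emph{both} sides exactly under the stated hypothesis $|\alpha abz/q|<1$, not unconditionally on the right. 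With this correction, your proof is complete and agrees with the paper's intended argument.
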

\section{Some $q$-series transformation formulas}
In this section we will use Theorem~\ref{newliuthmb} to derive some $q$-transformation formula.
\begin{thm}\label{liutfthma} If $m$ is an nonnegative integer, then, we have
\begin{align*}
&\frac{(\alpha^2 q^2, \alpha^2 ab/q^2; q^2)_m}{(\alpha^2 a, \alpha^2 b; q^2)_m}
{_5\phi_4}\left({{q^{-2m}, q^2/a, q^2/b, \lambda, q\lambda }\atop{\alpha,  q\alpha, q^2\lambda^2,  q^4/{\alpha^2 ab q^{2m}}}}; q^2, q^2 \right)\\
=&\sum_{n=0}^m \frac{(1-\alpha^2 q^{4n})(q^{-2m}, \alpha^2, q^2/a, q^2/b; q^2)_n (-q, \alpha /\lambda; q)_n (\alpha^2 \lambda ab q^{2m-2})^n}
{(1-\alpha^2)(q^2, \alpha^2 q^{2m+2}, \alpha^2 a, \alpha^2 b; q^2)_n (\alpha, -q\lambda; q)_n}.
\end{align*}
\end{thm}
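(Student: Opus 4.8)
The plan is to obtain Theorem~\ref{liutfthma} as a direct specialization of Theorem~\ref{newliuthmb}. In that theorem I would replace the base $q$ by $q^2$ and then set $\alpha\to\alpha^2$, keep $a$ and $b$, take $\beta=\lambda$, $\gamma=q\lambda$, $z=1$, and choose the three free lower parameters to be $\{c,d,h\}=\{\alpha,\,q\alpha,\,q^2\lambda^2\}$ (their order is immaterial, since a ${_5\phi_4}$ is symmetric in its lower entries). Under $q\to q^2$, $\alpha\to\alpha^2$ the prefactor $(\alpha q,\alpha ab/q;q)_m/(\alpha a,\alpha b;q)_m$ becomes $(\alpha^2 q^2,\alpha^2 ab/q^2;q^2)_m/(\alpha^2 a,\alpha^2 b;q^2)_m$, the Saalsch\"utz-type lower parameter $q^2/(\alpha ab q^m)$ becomes $q^4/(\alpha^2 ab q^{2m})$, and the argument $qz$ becomes $q^2$. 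Hence the left-hand side of Theorem~\ref{newliuthmb} turns verbatim into the left-hand side of Theorem~\ref{liutfthma}; the first step is simply to record this matching carefully.

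With these substitutions the right-hand side of Theorem~\ref{newliuthmb} reads
\[
\sum_{n=0}^m \frac{(1-\alpha^2 q^{4n})(q^{-2m},\alpha^2,q^2/a,q^2/b;q^2)_n(\alpha^2 ab q^{2m-2})^n}{(1-\alpha^2)(q^2,\alpha^2 q^{2m+2},\alpha^2 a,\alpha^2 b;q^2)_n}\,{_4\phi_3}\left({{q^{-2n},\alpha^2 q^{2n},\lambda,q\lambda}\atop{\alpha,q\alpha,q^2\lambda^2}};q^2,q^2\right).
\]
Comparing this with the target, the whole burden collapses onto the single inner series: I must show
\[
{_4\phi_3}\left({{q^{-2n},\alpha^2 q^{2n},\lambda,q\lambda}\atop{\alpha,q\alpha,q^2\lambda^2}};q^2,q^2\right)=\frac{(-q,\alpha/\lambda;q)_n}{(\alpha,-q\lambda;q)_n}\,\lambda^n.
\]
Indeed, once this is available the factor $\lambda^n$ merges with $(\alpha^2 ab q^{2m-2})^n$ into $(\alpha^2\lambda ab q^{2m-2})^n$, while $(-q,\alpha/\lambda;q)_n$ and $(\alpha,-q\lambda;q)_n$ slot into the numerator and denominator, reproducing the statement term by term.

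The main obstacle is establishing this ${_4\phi_3}$ evaluation. I would first observe that the series is terminating and \emph{balanced} in base $q^2$: the product of its upper parameters is $\alpha^2 q\lambda^2$ and that of its lower parameters is $\alpha^2 q^3\lambda^2=q^2\cdot\alpha^2 q\lambda^2$. The decisive extra structure is that $q^{-2n}=(q^{-n})^2$ and $\alpha^2 q^{2n}=(\alpha q^n)^2$ are perfect squares, while the remaining entries occur in the $q$-shifted pairs $(\lambda,q\lambda)$ and $(\alpha,q\alpha)$ with $(q\lambda)^2=q^2\lambda^2$ sitting in the denominator; this is exactly the shape handled by a quadratic summation, and the appearance of $(-q;q)_n$ and $(-q\lambda;q)_n$ in the proposed closed form is the tell-tale signature of such a quadratic evaluation. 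I would therefore identify the sum with a known terminating quadratic summation of Gasper--Rahman type. Absent a direct citation, the identity can be proved by induction on $n$: the cases $n=0,1$ are checked directly, and one shows that both sides obey the same first-order recurrence, the ratio of consecutive right-hand sides being $\lambda(1+q^n)(1-\alpha q^{n-1}/\lambda)/[(1-\alpha q^{n-1})(1+q^n\lambda)]$. Verifying that the ${_4\phi_3}$ satisfies this same recurrence, via a contiguous relation for the balanced series, is the computational heart of the argument.

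With the inner summation in hand, substituting it into the displayed right-hand side and amalgamating the powers of $\lambda$ yields precisely the right-hand side of Theorem~\ref{liutfthma}, completing the proof.
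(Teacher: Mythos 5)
Your proposal is correct and follows essentially the same route as the paper: the identical specialization $(\beta,\gamma,c,d,h,z)=(\lambda,q\lambda,\alpha,q\alpha,q^{2}\lambda^{2},1)$ of Theorem~\ref{newliuthmb} under $q\to q^{2}$, $\alpha\to\alpha^{2}$, reducing everything to the terminating quadratic evaluation
\[
{_4\phi_3}\left({{q^{-2n},\alpha^{2}q^{2n},\lambda,q\lambda}\atop{\alpha,q\alpha,q^{2}\lambda^{2}}};q^{2},q^{2}\right)
=\frac{\lambda^{n}(-q,\alpha/\lambda;q)_{n}}{(\alpha,-q\lambda;q)_{n}},
\]
which you correctly recognize as a known Gasper--Rahman-type quadratic summation. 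The only difference is that the paper simply cites this evaluation from Verma and Jain \cite[Eq. (5.3)]{VermaJain} (see also \cite[p. 110, Ex. (3.34)]{Gas+Rah}), whereas you sketch an inductive fallback proof; that sketch is plausible but not carried out, and the citation route suffices.
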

\begin{proof}
We first replace $q$ by $q^2$ and $\alpha$ by $\alpha^2$ in Theorem \ref{newliuthmb} and then set $(\beta, \gamma, c, d, h, z)
=(\lambda, q\lambda, \alpha, q\alpha, \lambda^2 q^2, 1)$ in the resulting equation, we deduce that
\begin{align*}
&\frac{(\alpha^2 q^2, \alpha^2 ab/q^2; q^2)_m}{(\alpha^2 a, \alpha^2 b; q^2)_m}
{_5\phi_4}\left({{q^{-2m}, q^2/a, q^2/b, \lambda, q\lambda }\atop{\alpha,  q\alpha, q^2\lambda^2,  q^4/{\alpha^2 ab q^{2m}}}}; q^2, q^2 \right)\\
=&\sum_{n=0}^m \frac{(1-\alpha^2 q^{4n})(q^{-2m}, \alpha^2, q^2/a, q^2/b; q^2)_n(\alpha^2 ab q^{2m-2})^n}
{(1-\alpha^2)(q^2, \alpha^2 q^{2m+2}, \alpha^2 a, \alpha^2 b; q^2)_n}\\
&\qquad\times{_4\phi_3}\({{q^{-2n}, \alpha^2 q^{2n}, \lambda, q\lambda}\atop{\alpha, q\alpha, q^2\lambda^2}}; q^2, q^2 \).
\end{align*}
Verma and Jain \cite[Eq. (5.3)]{VermaJain} (see also \cite[p. 110, Ex. (3.34)]{Gas+Rah}) proved that
\begin{equation}
{_4\phi_3} \left({{q^{-2n}, \alpha^2 q^{2n}, \lambda, q\lambda} \atop { \alpha, q \alpha, q^2\lambda^2}} ;  q^2, q^2 \right)
=\frac{\lambda^n (-q, \alpha /\lambda; q)_n}{(\alpha, -q\lambda; q)_n}.
\label{rogers:eqn1}
\end{equation}
Combining the above two equations, we complete the proof of Theorem~\ref{liutfppa}.
\end{proof}
Letting $m\to \infty$ in Theorem~\ref{liutfthma}, we obtain the following proposition.
\begin{prop}\label{liutfppa}
\begin{align*}
&\frac{(\alpha^2 q^2, \alpha^2 ab/q^2; q^2)_\infty}{(\alpha^2 a, \alpha^2 b; q^2)_\infty}
{_4\phi_3}\left({{q^2/a, q^2/b, \lambda, q\lambda }\atop{\alpha,  q\alpha, q^2\lambda^2 }}; q^2, \frac{\alpha^2 ab}{q^2} \right)\\
=&\sum_{n=0}^\infty \frac{(1-\alpha^2 q^{4n})( \alpha^2, q^2/a, q^2/b; q^2)_n (-q, \alpha /\lambda; q)_n (-\alpha^2 \lambda ab)^n q^{n^2-3n}}
{(1-\alpha^2)(q^2, \alpha^2 a, \alpha^2 b; q^2)_n (\alpha, -q\lambda; q)_n}.
\end{align*}
\end{prop}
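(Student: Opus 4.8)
The plan is to obtain Proposition~\ref{liutfppa} by letting $m \to \infty$ in Theorem~\ref{liutfthma}, passing to the limit term by term on both sides. First I would record that both sides of Theorem~\ref{liutfthma} are in fact finite sums: the factor $(q^{-2m}; q^2)_n$ in each summand vanishes for $n > m$, so the $_5\phi_4$ on the left and the sum on the right both terminate at $n = m$. This reduces the task to controlling each fixed-$n$ summand as $m \to \infty$ and then justifying the interchange of limit and summation.

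On the left-hand side the prefactor is immediate, since $\frac{(\alpha^2 q^2, \alpha^2 ab/q^2; q^2)_m}{(\alpha^2 a, \alpha^2 b; q^2)_m} \to \frac{(\alpha^2 q^2, \alpha^2 ab/q^2; q^2)_\infty}{(\alpha^2 a, \alpha^2 b; q^2)_\infty}$. In the $_5\phi_4$ the only $m$-dependent pieces of the $n$-th summand are the numerator factor $(q^{-2m}; q^2)_n$ and the denominator factor $(q^4/\alpha^2 ab q^{2m}; q^2)_n$. Factoring $-q^{2k-2m}$ out of each of the $n$ factors in the numerator, and the analogous quantity out of the denominator, I expect
\[
\lim_{m \to \infty} \frac{(q^{-2m}; q^2)_n}{(q^4/\alpha^2 ab q^{2m}; q^2)_n} = \left(\frac{\alpha^2 ab}{q^4}\right)^n,
\]
so that the $n$-th term of the $_5\phi_4$, carrying its argument $(q^2)^n$, tends to $\frac{(q^2/a, q^2/b, \lambda, q\lambda; q^2)_n}{(q^2, \alpha, q\alpha, q^2\lambda^2; q^2)_n}\left(\frac{\alpha^2 ab}{q^2}\right)^n$. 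This is precisely the $n$-th term of the $_4\phi_3$ with argument $\alpha^2 ab/q^2$ on the left of the proposition.

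On the right-hand side I would isolate the $m$-dependent part of the $n$-th summand, namely $(q^{-2m}; q^2)_n$ in the numerator, $(\alpha^2 q^{2m+2}; q^2)_n$ in the denominator, and the factor $q^{2mn}$ pulled out of $(\alpha^2 \lambda ab q^{2m-2})^n = (\alpha^2 \lambda ab q^{-2})^n (q^{2m})^n$. Using the asymptotics $(q^{-2m}; q^2)_n \sim (-1)^n q^{n(n-1) - 2mn}$ and $(\alpha^2 q^{2m+2}; q^2)_n \to 1$ yields
\[
\lim_{m \to \infty} \frac{(q^{-2m}; q^2)_n\, q^{2mn}}{(\alpha^2 q^{2m+2}; q^2)_n} = (-1)^n q^{n(n-1)},
\]
and combining this with the surviving $m$-free factors converts $(\alpha^2 \lambda ab q^{-2})^n (-1)^n q^{n(n-1)}$ into $(-\alpha^2 \lambda ab)^n q^{n^2 - 3n}$, reproducing exactly the summand in Proposition~\ref{liutfppa}.

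Finally I would justify the term-by-term passage to the limit. Because $0 < q < 1$, the Gaussian factor $q^{n^2 - 3n}$ makes the limiting series on the right converge absolutely and in fact super-exponentially, and a bound of the same type, uniform in $m$, dominates the truncated summands; the $_4\phi_3$ on the left converges under the natural restriction $|\alpha^2 ab/q^2| < 1$. The interchange of $\lim_{m \to \infty}$ with the summation is then legitimized by Tannery's theorem, exactly as in the proof of Theorem~\ref{rogersliuthm}. The main obstacle is producing the $m$-uniform dominating bound for the summands, but the strong decay $q^{n^2 - 3n}$ renders this routine, which completes the plan.
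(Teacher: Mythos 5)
Your proposal is correct and takes essentially the same route as the paper, which obtains Proposition~\ref{liutfppa} precisely by letting $m\to\infty$ in Theorem~\ref{liutfthma}. Your term-by-term limit computations (the ratio $(q^{-2m};q^2)_n/(q^4/\alpha^2abq^{2m};q^2)_n\to(\alpha^2ab/q^4)^n$ on the left and $(q^{-2m};q^2)_n\,q^{2mn}\to(-1)^nq^{n(n-1)}$ on the right) and the appeal to Tannery's theorem simply make explicit the details that the paper leaves unstated.
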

Let $(a, b, \alpha, \gamma)=(0,  0, - q,  0)$ in Proposition~\ref{liutfppa}, we have the evaluation
\begin{align}
\sum_{n=0}^\infty \frac{q^{2n^2+2n}}{(-q; q)_{2n}(q^2; q^2)_n}
&=\frac{1}{(q^2; q^2)_\infty} \sum_{n=-\infty}^\infty (-1)^n  q^{\frac{7n^2+3n}{2}}
\label{rrideqn1}\\
=\frac{(q^2, q^5, q^7; q^7)_\infty}{(q^2; q^2)_\infty}.\nonumber
\end{align}

\begin{thm}\label{liutfthmb} For any nonnegative integer $m,$ we have
\begin{align*}
&\frac{(\alpha^2 q^2, \alpha^2 ab/q^2; q^2)_m}{(\alpha^2 a, \alpha^2 b; q^2)_m}
{_5\phi_4}\left({{q^{-2m}, q^2/a, q^2/b, \lambda, q\lambda }\atop{q\alpha,  q^2\alpha, \lambda^2,  q^4/{\alpha^2 ab q^{2m}}}}; q^2, q^2 \right)\\
=&\sum_{n=0}^m \frac{(1+\alpha q^{2n})(q^{-2m}, \alpha^2, q^2/a, q^2/b; q^2)_n (-q, q\alpha /\lambda; q)_n (\alpha^2 \lambda ab q^{2m-2})^n}
{(1+\alpha)(q^2, \alpha^2 q^{2m+2}, \alpha^2 a, \alpha^2 b; q^2)_n (\alpha, -\lambda; q)_n}.
\end{align*}
\end{thm}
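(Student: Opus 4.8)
The plan is to mirror the proof of Theorem~\ref{liutfthma}, but to replace the Verma--Jain evaluation (\ref{rogers:eqn1}) by its natural companion. First I would take Theorem~\ref{newliuthmb}, replace $q$ by $q^2$ and $\alpha$ by $\alpha^2$, and then specialize the free parameters to
\[
(\beta, \gamma, c, d, h, z)=(\lambda, q\lambda, q\alpha, q^2\alpha, \lambda^2, 1).
\]
With this choice the $_5\phi_4$ on the left-hand side of Theorem~\ref{newliuthmb} becomes exactly the $_5\phi_4$ appearing in Theorem~\ref{liutfthmb} (its lower parameters being $q\alpha, q^2\alpha, \lambda^2, q^4/(\alpha^2 ab q^{2m})$, in any order), while the prefactor $\frac{(\alpha^2 q^2, \alpha^2 ab/q^2; q^2)_m}{(\alpha^2 a, \alpha^2 b; q^2)_m}$ already agrees. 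Thus the two left-hand sides coincide, and the whole problem reduces to evaluating the inner terminating series on the right-hand side.

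After this specialization the right-hand side of Theorem~\ref{newliuthmb} reads
\[
\sum_{n=0}^m \frac{(1-\alpha^2 q^{4n})(q^{-2m}, \alpha^2, q^2/a, q^2/b; q^2)_n(\alpha^2 ab q^{2m-2})^n}{(1-\alpha^2)(q^2, \alpha^2 q^{2m+2}, \alpha^2 a, \alpha^2 b; q^2)_n}\,{_4\phi_3}\!\left({{q^{-2n}, \alpha^2 q^{2n}, \lambda, q\lambda}\atop{q\alpha, q^2\alpha, \lambda^2}}; q^2, q^2\right).
\]
The key step is to sum the inner $_4\phi_3$ in closed form. This is \emph{not} a reparametrization of (\ref{rogers:eqn1}) --- the numerator entry $\alpha^2 q^{2n}$ together with the denominator pattern $q\alpha, q^2\alpha, \lambda^2$ forces a genuinely different evaluation --- but it is precisely the companion of the Verma--Jain quadratic summation, which I would cite from the same source \cite{VermaJain} (cf.\ \cite{Gas+Rah}). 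Concretely, the companion formula I need is
\[
{_4\phi_3}\!\left({{q^{-2n}, \alpha^2 q^{2n}, \lambda, q\lambda}\atop{q\alpha, q^2\alpha, \lambda^2}}; q^2, q^2\right)=\frac{(1-\alpha)\,\lambda^n(-q, q\alpha/\lambda; q)_n}{(1-\alpha q^{2n})(\alpha, -\lambda; q)_n}.
\]

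Substituting this evaluation and collecting the scalar factors then finishes the argument. Using $1-\alpha^2=(1-\alpha)(1+\alpha)$ and $1-\alpha^2 q^{4n}=(1-\alpha q^{2n})(1+\alpha q^{2n})$ one obtains the clean cancellation
\[
\frac{1-\alpha^2 q^{4n}}{1-\alpha^2}\cdot\frac{1-\alpha}{1-\alpha q^{2n}}=\frac{1+\alpha q^{2n}}{1+\alpha},
\]
while the powers combine as $(\alpha^2 ab q^{2m-2})^n\lambda^n=(\alpha^2\lambda ab q^{2m-2})^n$. Thus the very-well-poised factor $\frac{1-\alpha^2 q^{4n}}{1-\alpha^2}$ of Theorem~\ref{liutfthma} is replaced by $\frac{1+\alpha q^{2n}}{1+\alpha}$, and the remaining $q$-shifted factorials reorganize into exactly the summand of Theorem~\ref{liutfthmb}.

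I expect the main obstacle to be pinning down and verifying the companion Verma--Jain evaluation in the displayed form, since (unlike in the proof of Theorem~\ref{liutfthma}) it cannot be recovered from (\ref{rogers:eqn1}) by any substitution; in particular one must track its $q$-factor $\frac{1-\alpha}{1-\alpha q^{2n}}$ carefully, for this is exactly what converts the $\frac{1-\alpha^2 q^{4n}}{1-\alpha^2}$ of the source theorem into the $\frac{1+\alpha q^{2n}}{1+\alpha}$ demanded by the statement. All the remaining steps are routine bookkeeping of $q$-shifted factorials.
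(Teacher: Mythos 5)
Your proposal is correct and follows essentially the same route as the paper: the paper also specializes Theorem~\ref{newliuthmb} (with $q\to q^2$, $\alpha\to\alpha^2$ and $(\beta,\gamma,c,d,h,z)=(\lambda,q\lambda,q\alpha,q^2\alpha,\lambda^2,1)$) and then invokes exactly the companion Verma--Jain summation you display, which is their Eq.~(5.4) (quoted in the paper as (\ref{rogers:eqn2})), followed by the same factor bookkeeping $\frac{1-\alpha^2q^{4n}}{1-\alpha^2}\cdot\frac{1-\alpha}{1-\alpha q^{2n}}=\frac{1+\alpha q^{2n}}{1+\alpha}$. Your observation that this evaluation is not a substitution instance of (\ref{rogers:eqn1}) but a genuinely separate companion formula is consistent with the paper, which cites it as a distinct result from the same source.
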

\begin{proof}
Replacing $q$ by $q^2$ and $\alpha$ by $\alpha^2$ in Theorem \ref{newliuthmb} and then set $(\beta, \gamma, c, d, h, z)
=(\lambda, q\lambda, q\alpha, q^2\alpha, \lambda^2 , 1)$ in the resulting equation, we deduce that
\begin{align*}
&\frac{(\alpha^2 q^2, \alpha^2 ab/q^2; q^2)_m}{(\alpha^2 a, \alpha^2 b; q^2)_m}
{_5\phi_4}\left({{q^{-2m}, q^2/a, q^2/b, \lambda, q\lambda }\atop{q\alpha,  q^2\alpha, \lambda^2,  q^4/{\alpha^2 ab q^{2m}}}}; q^2, q^2 \right)\\
=&\sum_{n=0}^m \frac{(1-\alpha^2 q^{4n})(q^{-2m}, \alpha^2, q^2/a, q^2/b; q^2)_n(\alpha^2 ab q^{2m-2})^n}
{(1-\alpha^2)(q^2, \alpha^2 q^{2m+2}, \alpha^2 a, \alpha^2 b; q^2)_n}\\
&\qquad\times{_4\phi_3}\({{q^{-2n}, \alpha^2 q^{2n}, \lambda, q\lambda}\atop{q\alpha, q^2\alpha, \lambda^2}}; q^2, q^2 \).
\end{align*}
Verma and Jain \cite[Eq. (5.4)]{VermaJain}  derived  the following series summation  formula:
\begin{equation}
{_4\phi_3} \left({{q^{-2n}, \alpha^2 q^{2n}, \lambda, q\lambda} \atop { q\alpha, q^2 \alpha, \lambda^2}} ;  q^2, q^2 \right)
=\frac{\lambda^n (-q, q\alpha /\lambda; q)_n (1-\alpha)}{(\alpha, -\lambda; q)_n (1-\alpha q^{2n})}.
\label{rogers:eqn2}
\end{equation}
Combining the above two equations, we finish the proof of Theorem~\ref{liutfthmb}.
\end{proof}
Letting $m\to \infty$ in Theorem~\ref{liutfthmb}, we obtain the following proposition.
\begin{prop}\label{liutfppb}
\begin{align*}
&\frac{(\alpha^2 q^2, \alpha^2 ab/q^2; q^2)_\infty}{(\alpha^2 a, \alpha^2 b; q^2)_\infty}
{_4\phi_3}\left({{q^2/a, q^2/b, \lambda, q\lambda }\atop{q\alpha,  q^2\alpha, \lambda^2}}; q^2, \frac{\alpha^2 ab}{q^2} \right)\\
=&\sum_{n=0}^\infty \frac{(1+\alpha q^{2n})(\alpha^2, q^2/a, q^2/b; q^2)_n (-q, q\alpha /\lambda; q)_n (-\alpha^2 \lambda ab )^nq^{n^2-3n}}
{(1+\alpha)(q^2,  \alpha^2 a, \alpha^2 b; q^2)_n (\alpha, -\lambda; q)_n}.
\end{align*}
\end{prop}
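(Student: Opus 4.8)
The plan is to obtain Proposition~\ref{liutfppb} by letting $m\to\infty$ in Theorem~\ref{liutfthmb}, treating the two sides separately and justifying the passage to the limit term by term. First I would record the two elementary limits that drive everything. For each fixed nonnegative integer $k$ one has
$$\lim_{m\to\infty}q^{2mk}(q^{-2m};q^2)_k=(-1)^kq^{k(k-1)},$$
which follows from $q^{2m}(1-q^{2j-2m})=q^{2m}-q^{2j}\to-q^{2j}$, and likewise
$$\lim_{m\to\infty}q^{2mk}\left(\frac{q^4}{\alpha^2 ab q^{2m}};q^2\right)_k=(-1)^k\left(\frac{q^4}{\alpha^2 ab}\right)^kq^{k(k-1)},\qquad \lim_{m\to\infty}(\alpha^2 q^{2m+2};q^2)_n=1.$$

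On the left-hand side, the prefactor $(\alpha^2 q^2,\alpha^2 ab/q^2;q^2)_m/(\alpha^2 a,\alpha^2 b;q^2)_m$ converges to its $\infty$-analogue at once since $|q|<1$. In the ${_5\phi_4}$ the only $m$-dependent factors in the $k$-th term are $(q^{-2m};q^2)_k$ in the numerator and $(q^4/\alpha^2 ab q^{2m};q^2)_k$ in the denominator; multiplying each by $q^{2mk}$ (these cancel in the quotient) and using the first two limits above, their ratio tends to $(\alpha^2 ab/q^4)^k$. Combined with the series argument $q^2$ this produces the factor $(\alpha^2 ab/q^2)^k$, so the ${_5\phi_4}$ collapses to the nonterminating
$${_4\phi_3}\left({{q^2/a, q^2/b, \lambda, q\lambda}\atop{q\alpha, q^2\alpha, \lambda^2}}; q^2, \frac{\alpha^2 ab}{q^2}\right),$$
which is exactly the left-hand side of Proposition~\ref{liutfppb}.

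On the right-hand side, the $m$-dependent factors in the $n$-th summand are $(q^{-2m};q^2)_n$ and $(\alpha^2 \lambda ab q^{2m-2})^n$ in the numerator and $(\alpha^2 q^{2m+2};q^2)_n$ in the denominator. Writing $(\alpha^2 \lambda ab q^{2m-2})^n=(\alpha^2 \lambda ab q^{-2})^nq^{2mn}$ and applying the limits above gives
$$\lim_{m\to\infty}\frac{(q^{-2m};q^2)_n(\alpha^2 \lambda ab q^{2m-2})^n}{(\alpha^2 q^{2m+2};q^2)_n}=(-1)^nq^{n(n-1)}(\alpha^2 \lambda ab q^{-2})^n=(-\alpha^2 \lambda ab)^nq^{n^2-3n},$$
so the upper summation limit $m$ may be replaced by $\infty$ and every summand tends to the corresponding summand of Proposition~\ref{liutfppb}, the remaining factors being independent of $m$.

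The main point requiring care is the legitimacy of interchanging the limit $m\to\infty$ with the infinite summations, rather than any single algebraic manipulation. On the right this is immediate: the factor $q^{n^2-3n}$ forces super-exponential decay uniformly for all large $m$, so Tannery's theorem (already used in the proof of Theorem~\ref{rogersliuthm}) applies. On the left one must likewise dominate the tail of the ${_5\phi_4}$ uniformly in $m$; under the convergence hypothesis $|\alpha^2 ab/q^2|<1$ the limiting ${_4\phi_3}$ converges, and a uniform geometric bound on the $k$-th term for large $m$ again licenses term-by-term passage. Once both limits are justified, equating the two sides yields Proposition~\ref{liutfppb}.
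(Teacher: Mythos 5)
Your proposal is correct and takes exactly the paper's route: the paper derives Proposition~\ref{liutfppb} simply by letting $m\to\infty$ in Theorem~\ref{liutfthmb}, which is precisely your argument, with the termwise limits and the Tannery-theorem justification (left implicit in the paper) worked out correctly.
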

Setting $(a, b, \alpha, \gamma)=(0, 0, -q, 0)$ and $(0, 0, -1, 0)$ in Proposition~\ref{liutfppb}, respectively,
we find that
\begin{align}
\sum_{n=0}^\infty \frac{q^{2n^2+2n}}{(-q; q)_{2n+1}(q^2; q^2)_n}
&=\frac{(q, q^6, q^7; q^7)_\infty}{(q^2; q^2)_\infty},\label{rrideqn2}\\
\sum_{n=0}^\infty \frac{q^{2n^2}}{(-q; q)_{2n}(q^2; q^2)_n}
&=\frac{(q^3, q^4, q^7; q^7)_\infty}{(q^2; q^2)_\infty}. \label{rrideqn3}
\end{align}
Identities (\ref{rrideqn1}), (\ref{rrideqn2}) and (\ref{rrideqn3}) are called the Rogers-Selberg identities
(see, for example, \cite[Eqs. (2.7.1), (2.7.2), (2.7. 3)]{McLaughlinS}).

It is easily seen that the identity of Verma and  Jain \cite[Eq. (2.28)]{VermaJain} is equivalent to the summation
\begin{equation}
{_3\phi_2} \left({{q^{-n}, \alpha q^n, 0} \atop {\sqrt{q\alpha}, -\sqrt{q\alpha}}} ;  q, q \right)
=\begin{cases} 0 &\text {if $n$ is odd}\\
(-1)^l q^{l^2} \frac{(q; q^2)_l \alpha^l }{(q\alpha; q^2)_l}, & \text{if $n=2l$}
\end{cases}
\label{rogers:eqn3}
\end{equation}
Setting $(\beta, \gamma, h, c, d, z)=(0, 0, 0, \sqrt{q\alpha}, -\sqrt{q\alpha}, 1)$ in Theorem~\ref {newliuthmb} and
then using (\ref{rogers:eqn3}), we obtain the following theorem.
\begin{thm}\label{liutfthmc} For any nonnegative integer $m,$ we have
\begin{align*}
&\frac{(\alpha q, \alpha ab/q; q)_m}{(\alpha a, \alpha b; q)_m}
{_4\phi_3}\left({{q^{-m}, q/a, q/b, 0 }\atop{\sqrt{q\alpha}, -\sqrt{q\alpha},  q^2/{\alpha ab q^m}}}; q, q\right)\\
&=\sum_{n=0}^{[m/2]} \frac{(1-\alpha q^{4n})(q^{-m}, \alpha, q/a, q/b; q)_{2n} (q; q^2)_{n}(-\alpha^3 a^2b^2 q^{2m-2})^n q^{n^2}}
{(1-\alpha)(q, \alpha q^{m+1}, \alpha a, \alpha b; q)_{2n} (q\alpha; q^2)_n}.
\end{align*}
\end{thm}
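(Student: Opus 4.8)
The plan is to follow the one-line recipe indicated just before the statement: specialize the master transformation of Theorem~\ref{newliuthmb} and then collapse the resulting inner series by the Verma--Jain evaluation (\ref{rogers:eqn3}). Concretely, I would set the six free parameters to $(\beta,\gamma,h,c,d,z)=(0,0,0,\sqrt{q\alpha},-\sqrt{q\alpha},1)$ in Theorem~\ref{newliuthmb}. Since $z=1$, every series argument $qz$ becomes $q$, which already matches the arguments in the target identity.

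First I would check the left-hand side. Because $(0;q)_k=1$, sending the two upper parameters $\beta,\gamma$ and the lower parameter $h$ to $0$ turns the ${_5\phi_4}$ of Theorem~\ref{newliuthmb} into the ${_4\phi_3}$ of Theorem~\ref{liutfthmc}. The one point to watch is the balance $1+s-r$ in the paper's definition of ${_r\phi_s}$: it equals $0$ for both ${_5\phi_4}$ and ${_4\phi_3}$, so one must retain a single spectator zero in the upper row (as is done in the statement, where $0$ sits among $q^{-m},q/a,q/b$) in order to preserve the count and avoid introducing a spurious factor $(-1)^k q^{k(k-1)/2}$. The prefactor $(\alpha q,\alpha ab/q;q)_m/(\alpha a,\alpha b;q)_m$ is unaffected, so the two left-hand sides coincide.

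Next I would treat the right-hand side. Under the same specialization, and again keeping one spectator zero, the inner ${_4\phi_3}$ of Theorem~\ref{newliuthmb} reduces to exactly ${_3\phi_2}\left({{q^{-n},\alpha q^n,0}\atop{\sqrt{q\alpha},-\sqrt{q\alpha}}};q,q\right)$, which is the series evaluated in (\ref{rogers:eqn3}). That evaluation vanishes for odd $n$ and equals $(-1)^l q^{l^2}(q;q^2)_l\alpha^l/(q\alpha;q^2)_l$ for $n=2l$. Hence only even terms survive the outer sum; writing $n=2l$ restricts the index to $0\le l\le[m/2]$, consistently with $(q^{-m};q)_{2l}$ vanishing once $2l>m$.

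The only genuine computation, and thus the sole place an error could creep in, is the consolidation of the prefactors after the substitution $n=2l$. Here I would combine the weight $(\alpha ab q^{m-1})^{2l}$ with the factor $(-1)^l\alpha^l q^{l^2}$ produced by (\ref{rogers:eqn3}); this gives $(-1)^l\alpha^{3l}(ab)^{2l}q^{l^2+(2m-2)l}$, which is precisely $(-\alpha^3 a^2 b^2 q^{2m-2})^l q^{l^2}$. Simultaneously $(1-\alpha q^{2n})$ becomes $(1-\alpha q^{4l})$, the Pochhammer symbols $(q^{-m},\alpha,q/a,q/b;q)_n$ and $(q,\alpha q^{m+1},\alpha a,\alpha b;q)_n$ acquire subscript $2l$, and the surviving ratio $(q;q^2)_l/(q\alpha;q^2)_l$ is placed in numerator and denominator. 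Relabeling $l\mapsto n$ then reproduces the right-hand side of Theorem~\ref{liutfthmc} verbatim. I expect no analytic difficulty whatsoever: unlike the earlier results of the paper this is a pure specialization, so the argument is complete once the $\alpha$- and $q$-powers are checked to consolidate as claimed.
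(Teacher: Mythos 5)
Your proposal is correct and is precisely the paper's own proof: the paper obtains Theorem~\ref{liutfthmc} by setting $(\beta,\gamma,h,c,d,z)=(0,0,0,\sqrt{q\alpha},-\sqrt{q\alpha},1)$ in Theorem~\ref{newliuthmb} and invoking the Verma--Jain evaluation (\ref{rogers:eqn3}), exactly as you do. Your consolidation of the powers, $(\alpha ab q^{m-1})^{2l}\cdot(-1)^l\alpha^l q^{l^2}=(-\alpha^3a^2b^2q^{2m-2})^l q^{l^2}$, and your remark about keeping a spectator zero so that $1+s-r=0$ are correct checks of details the paper leaves implicit.
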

Letting $m\to \infty$ in Theorem~\ref{liutfthmc}, we obtain the following proposition.
\begin{prop}\label{liutfppc}
\begin{align*}
&\frac{(\alpha q, \alpha ab/q; q)_\infty}{(\alpha a, \alpha b; q)_\infty}
{_3\phi_2}\left({{q/a, q/b, 0 }\atop{\sqrt{q\alpha}, -\sqrt{q\alpha}}}; q, \frac{\alpha ab}{q}\right)\\
&=\sum_{n=0}^{\infty} \frac{(1-\alpha q^{4n})(\alpha, q/a, q/b; q)_{2n} (q; q^2)_{n}(-\alpha^3 a^2b^2 )^n q^{3n^2-3n}}
{(1-\alpha)(q, \alpha a, \alpha b; q)_{2n} (q\alpha; q^2)_n}.
\end{align*}
\end{prop}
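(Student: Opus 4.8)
The plan is to obtain Proposition~\ref{liutfppc} directly as the limiting case $m\to\infty$ of the already-established Theorem~\ref{liutfthmc}. Since that theorem is an identity between two finite sums valid for every nonnegative integer $m$, it suffices to pass to the limit on each side and to justify the interchange of the limit with the summation. The elementary fact that drives everything is the asymptotic formula, valid for each fixed nonnegative integer $k$,
\[
\lim_{m\to\infty} q^{mk}(q^{-m}; q)_k=(-1)^k q^{k(k-1)/2},
\]
which follows at once from $(q^{-m};q)_k=\prod_{j=0}^{k-1}(1-q^{j-m})$ and $q^{j-m}\to\infty$. I would also record the trivial limits $(\alpha q^{m+1}; q)_{2n}\to 1$ and $(\xi; q)_m\to(\xi; q)_\infty$ for any fixed $\xi$, and note that since $(q^{-m};q)_{2n}=0$ once $2n>m$, the right-hand sum may be regarded as running over all $n\ge0$ with a vanishing tail.

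For the left-hand side, the prefactor $(\alpha q, \alpha ab/q; q)_m/(\alpha a, \alpha b; q)_m$ converges to $(\alpha q, \alpha ab/q; q)_\infty/(\alpha a, \alpha b; q)_\infty$. In the terminating ${}_4\phi_3$, the $k$-th summand carries the $m$-dependent quotient $(q^{-m}; q)_k/(q^{2-m}/\alpha ab; q)_k$ (after writing $q^2/\alpha ab q^m=q^{2-m}/\alpha ab$); applying the displayed asymptotic to numerator and denominator separately, this quotient tends to $(\alpha ab/q^2)^k$, and combining with the argument $q^k$ of the series yields $(\alpha ab/q)^k$. Hence the ${}_4\phi_3$ tends termwise to the ${}_3\phi_2$ with argument $\alpha ab/q$ that appears in the proposition.

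For the right-hand side, I would collect the three $m$-dependent factors $(q^{-m}; q)_{2n}$, $(q^{2m-2})^n$ and $(\alpha q^{m+1}; q)_{2n}$ in the $n$-th summand. The asymptotic with $k=2n$ gives $(q^{-m}; q)_{2n}q^{2mn}\to q^{2n^2-n}$ (the sign $(-1)^{2n}=1$), while the residual $q^{-2n}$ from $(q^{2m-2})^n$ and the denominator factor $(\alpha q^{m+1};q)_{2n}\to1$ account for the rest. Together with the already present $q^{n^2}$ this produces the exponent $q^{3n^2-3n}$ and the sign $(-\alpha^3a^2b^2)^n$, which is exactly the $n$-th term on the right of Proposition~\ref{liutfppc}.

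The interchange of limit and summation is the only genuine point to verify, and I would settle it with Tannery's theorem (dominated convergence for series). On the right this is unconditional: for $m\ge2n$ one has $|(q^{-m};q)_{2n}|q^{2mn}\le q^{2n^2-n}$ and $|(\alpha q^{m+1};q)_{2n}|$ is bounded away from $0$, so the $n$-th summand is dominated uniformly in $m$ by a constant multiple of $C^nq^{3n^2-3n}$, which is summable because of the Gaussian factor $q^{3n^2}$. On the left the same device requires the convergence condition $|\alpha ab/q|<1$, under which the $k$-th summand is dominated by a constant multiple of $(|\alpha ab/q|(1+\varepsilon))^k$ for all large $m$. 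I expect this domination bookkeeping---in particular controlling the ratio of the two factorials $(q^{-m};q)_k$ and $(q^{2-m}/\alpha ab;q)_k$, each of which blows up while their quotient stays bounded---to be the main, though modest, obstacle; everything else is routine limit evaluation.
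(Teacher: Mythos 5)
Your proposal is correct and is exactly the paper's approach: the paper proves Proposition~\ref{liutfppc} by the single line ``Letting $m\to\infty$ in Theorem~\ref{liutfthmc}, we obtain the following proposition,'' which is precisely your limit passage, and your term-by-term asymptotics (the quotient $(q^{-m};q)_k/(q^{2-m}/\alpha ab;q)_k\to(\alpha ab/q^2)^k$ on the left, and $(q^{-m};q)_{2n}q^{2mn}\to q^{2n^2-n}$ combining with $q^{n^2-2n}$ to give $q^{3n^2-3n}$ on the right) check out. Your Tannery justification, which the paper omits entirely, is sound --- the only care needed is making the geometric domination on the left uniform for $k$ comparable to $m$, which works under $|\alpha ab/q|<1$ since only boundedly many factor ratios can exceed $|\alpha ab/q^2|(1+\varepsilon)$.
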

Putting $(a, b, \alpha)=(0, 0, 1)$ and $(0, 0, q^2)$ in Proposition~\ref{liutfppc}, respectively, we obtain
the Rogers identities (see, for example, \cite[Eqs. (2.14.1), (2.14.3)]{McLaughlinS})
\begin{align}
\sum_{n=0}^\infty \frac{q^{n^2}} {(q; q)_n (q; q^2)_n}&=\frac{(q^6, q^8, q^{14}; q^{14})_\infty} {(q; q)_\infty},\label{rrideqn4}\\
\sum_{n=0}^\infty \frac{q^{n^2+2n}}{(q; q)_n (q; q^2)_{n+1}}&=\frac{(q^2, q^{12}, q^{14}; q^{14})_\infty}{(q; q)_\infty}. \label{rrideqn5}
\end{align}
\section{A multilinear generating function for the Rogers-Szeg\H{o} polynomials }
\begin{thm} \label{mgrogers} If $\max\{ |a|, |c|,  |x_1|, |y_1|, \ldots, |x_k|, |y_k|\}<1,$ then, we have the
following multilinear generating function for the Rogers-Szeg\H{o} polynomials:
\begin{align*}
\sum_{n_1, n_2, \ldots, n_k=0}^\infty \frac{(a; q)_{n_1+n_2+\cdots+n_k}h_{n_1}(x_1, y_1|q) h_{n_2}(x_2, y_2|q)\cdots h_{n_k}(x_k, y_k|q)}
{(c; q)_{n_1+n_2+\cdots+n_k}(q; q)_{n_1}(q; q)_{n_2}\cdots (q; q)_{n_k}}\\
=\frac{(a; q)_\infty}{(c, x_1, y_1, x_2, y_2, \ldots, x_k, y_k)_\infty}
{_{2k+1}\phi_{2k}}\left({{c/a, x_1, y_1, x_2, y_2, \ldots, x_k, y_k}
\atop{0, 0, \ldots, 0}}; q, a\right).
\end{align*}
\end{thm}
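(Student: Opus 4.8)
The plan is to follow the paradigm used throughout the paper: denote the right-hand side by $f(x_1,y_1,\ldots,x_k,y_k)$, verify that it meets the hypotheses of Theorem~\ref{mainthmliu}(i), and then pin down the resulting expansion coefficients by specializing all the second arguments to zero. First I would rewrite $f$ in a form adapted to $q$-differentiation. Since every lower parameter of the ${}_{2k+1}\phi_{2k}$ is zero, the factor $\big((-1)^nq^{n(n-1)/2}\big)^{1+2k-(2k+1)}$ is identically $1$, so the series collapses to $\sum_{n\ge 0}(c/a,x_1,y_1,\ldots,x_k,y_k;q)_n\,a^n/(q;q)_n$. Absorbing the prefactor and using $(z;q)_n/(z;q)_\infty=1/(zq^n;q)_\infty$ I obtain
\[
f=\frac{(a;q)_\infty}{(c;q)_\infty}\sum_{n=0}^\infty \frac{(c/a;q)_n\,a^n}{(q;q)_n}\prod_{j=1}^k\frac{1}{(x_jq^n;q)_\infty(y_jq^n;q)_\infty}.
\]
Analyticity near the origin in each variable separately is transparent from this series, so Hartog's theorem (Theorem~\ref{hartogthm}) yields joint analyticity at the origin.

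Next I would verify the system $\partial_{q,x_j}\{f\}=\partial_{q,y_j}\{f\}$ for $j=1,\ldots,k$. The key tool is the first-order case of formula (\ref{qd:eqn1}), namely $\partial_{q,x}\{1/(sx;q)_\infty\}=s/(sx;q)_\infty$ with $s=q^n$. Acting inside the sum, $\partial_{q,x_j}$ and $\partial_{q,y_j}$ each touch only their own factor and both return the multiplier $q^n$ times the very same product; hence the two $q$-partial derivatives agree term by term and therefore on all of $f$. By Theorem~\ref{mainthmliu}(i) there is then a sequence $\{\alpha_{n_1,\ldots,n_k}\}$, independent of the variables, with
\[
f=\sum_{n_1,\ldots,n_k=0}^\infty \alpha_{n_1,\ldots,n_k}\,h_{n_1}(x_1,y_1|q)\cdots h_{n_k}(x_k,y_k|q).
\]

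To evaluate the coefficients I would set $y_1=\cdots=y_k=0$ simultaneously. Then $h_{n_j}(x_j,0|q)=x_j^{n_j}$ turns the right side into a power series in $x_1,\ldots,x_k$ whose coefficients are exactly the $\alpha_{n_1,\ldots,n_k}$, while on the left each factor $1/(y_jq^n;q)_\infty$ becomes $1$. Expanding every remaining $1/(x_jq^n;q)_\infty$ by the $q$-binomial theorem and reading off the coefficient of $x_1^{n_1}\cdots x_k^{n_k}$ gives
\[
\alpha_{n_1,\ldots,n_k}=\frac{(a;q)_\infty}{(c;q)_\infty}\,\frac{1}{(q;q)_{n_1}\cdots(q;q)_{n_k}}\sum_{n=0}^\infty \frac{(c/a;q)_n}{(q;q)_n}\big(aq^{N}\big)^n,\qquad N=n_1+\cdots+n_k.
\]
The inner sum is a ${}_1\phi_0$, summed by the $q$-binomial theorem to $(cq^N;q)_\infty/(aq^N;q)_\infty$; combined with $(a;q)_\infty/(aq^N;q)_\infty=(a;q)_N$ and $(cq^N;q)_\infty/(c;q)_\infty=1/(c;q)_N$ this collapses to $\alpha_{n_1,\ldots,n_k}=(a;q)_N/\big((c;q)_N(q;q)_{n_1}\cdots(q;q)_{n_k}\big)$, which is precisely the summand on the left-hand side of the theorem.

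All the conceptual steps are routine once Theorem~\ref{mainthmliu} is in hand; the point demanding care is the coefficient extraction. Setting every $y_j=0$ at once is exactly what makes the argument work, since it determines all of the multi-indexed coefficients from a single $k$-variable power series, and uniqueness of power-series coefficients forces the match. The computational heart is then the single inner $q$-binomial sum that fuses the $n$- and $N$-dependence into the clean quotient $(a;q)_N/(c;q)_N$. I would also track the convergence region $\max\{|a|,|c|,|x_1|,|y_1|,\ldots,|x_k|,|y_k|\}<1$ to justify interchanging the order of summation when reading off coefficients.
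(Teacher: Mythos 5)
Your proof is correct, and its skeleton is exactly the paper's: denote the right-hand side by $f$, establish analyticity, verify the system $\partial_{q,x_j}\{f\}=\partial_{q,y_j}\{f\}$, invoke Theorem~\ref{mainthmliu}(i), and extract the coefficients by setting $y_1=\cdots=y_k=0$. The one genuine difference lies in the last step. The paper identifies the coefficients by quoting Andrews' 1972 summation formula for the $q$-Lauricella function (specialized to $b_1=\cdots=b_k=0$) and matching it against the specialized series; you instead re-derive that special case from scratch: after rewriting $f$ via $(z;q)_n/(z;q)_\infty=1/(zq^n;q)_\infty$, you expand each $1/(x_jq^n;q)_\infty$ by Euler's theorem, read off the coefficient of $x_1^{n_1}\cdots x_k^{n_k}$, and collapse the resulting inner ${}_1\phi_0$ by the $q$-binomial theorem to obtain $(a;q)_N/(c;q)_N$ with $N=n_1+\cdots+n_k$. (Your preliminary rewriting also makes the verification of the $q$-partial differential equations transparent, since each term simply picks up the factor $q^n$ under either derivative, matching the paper's "direct computation" that sends $a\mapsto qa$.) What your route buys is self-containedness — no appeal to Andrews' result is needed, only the $q$-binomial theorem already used elsewhere in the paper; what the paper's route buys is brevity and an explicit link to the $q$-Lauricella literature. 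Both are valid; your inner computation is in effect a proof of the $b_j=0$ case of Andrews' formula.
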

\begin{proof}
If we use  $f(x_1, y_1, \ldots, x_k, y_k)$ to denote the right-hand side of the above equation, then,
using the ratio test, we find that $f$ is an analytic function of
$x_1, y_1, x_2, y_2, \ldots, x_k, y_k$ for $\max\{ |a|,|c|,  |x_1|, |y_1|, \ldots, |x_k|, |y_k|\}<1.$
By a direct computation, we deduce that for $j=1, 2\ldots, k$, $\partial_{q, x_j}\{f\}=\partial_{q, y_j}\{f\}$
equals
\[
 \frac{(a; q)_\infty}{(c, x_1, y_1, x_2, y_2, \ldots, x_k, y_k)_\infty}
{_{2k+1}\phi_{2k}}\left({{c/a, x_1, y_1, x_2, y_2, \ldots, x_k, y_k}
\atop{0, 0, \ldots, 0}}; q, qa\right).
\]
Thus, by (i) in Theorem~\ref{mainthmliu},  there exists a sequence $\{\alpha_{n_1, n_2, \ldots, n_k}\}$ independent
of $x_1, y_1, \ldots, x_k, y_k$ such that
\begin{align}
&f(x_1, y_1, x_2, y_2, \ldots, x_k, y_k)\label{mgeneqn}\\
&=\sum_{n_1, n_2, \ldots, n_k=0}^\infty \alpha_{n_1, n_2, \ldots, n_k}
h_{n_1}(x_1, y_1|q) h_{n_2}(x_2, y_2|q)\cdots h_{n_k}(x_k, y_k|q).\nonumber
\end{align}
Setting $y_1=y_2\cdots=y_k=0$ in the above equation, we immediately obtain
\begin{align}
&\sum_{n_1, n_2, \ldots, n_k=0}^\infty \alpha_{n_1, n_2, \ldots, n_k}
x_{1}^{n_1}x_2^{n_2}\cdots x_{k}^{n_k}\label{mgeneqn1}\\
&=\frac{(a; q)_\infty}{(c, x_1, x_2, \ldots, x_k)_\infty}
{_{k+1}\phi_{k}}\left({{c/a, x_1, x_2, \ldots, x_k}
\atop{0, 0, \ldots, 0}}; q, a\right).\nonumber
\end{align}
Andrews \cite {Andrews1972} has proved the following formula for the $q$-Lauricella function:
\begin{align*}
\sum_{n_1, n_2, \ldots, n_k=0}^\infty \frac{(a; q)_{n_1+n_2+\cdots+n_k}(b_1; q)_{n_1}(b_2; q)_{n_2}\cdots (b_k; q)_{n_k}
x_1^{n_1} x_2^{n_2}\cdots x_k^{n_k}}
{(c; q)_{n_1+n_2+\cdots+n_k}(q; q)_{n_1}(q; q)_{n_2}\cdots (q; q)_{n_k}}\\
=\frac{(a, b_1x_1, b_2x_2, \ldots, b_kx_k; q)_\infty}{(c, x_1, x_2, \ldots, x_k)_\infty}
{_{k+1}\phi_{k}}\left({{c/a, x_1, x_2, \ldots, x_k}
\atop{bx_1, bx_2, \ldots, bx_k}}; q, a\right).
\end{align*}
Taking $b_1=b_2=\cdots=b_k=0$ in the above equation, we conclude that
\begin{align*}
\sum_{n_1, n_2, \ldots, n_k=0}^\infty \frac{(a; q)_{n_1+n_2+\cdots+n_k}
x_1^{n_1} x_2^{n_2}\cdots x_k^{n_k}}
{(c; q)_{n_1+n_2+\cdots+n_k}(q; q)_{n_1}(q; q)_{n_2}\cdots (q; q)_{n_k}}\\
=\frac{(a; q)_\infty}{(c, x_1, x_2, \ldots, x_k)_\infty}
{_{k+1}\phi_{k}}\left({{c/a, x_1, x_2, \ldots, x_k}
\atop{0, 0, \ldots, 0}}; q, a\right).
\end{align*}
Equating the above equation and (\ref{mgeneqn1}), we find that
\[
\alpha_{n_1, n_2, \ldots, n_k}=\frac{(a; q)_{n_1+n_2+\cdots+n_k}}
{(c; q)_{n_1+n_2+\cdots+n_k}(q; q)_{n_1}(q; q)_{n_2}\cdots (q; q)_{n_k}}.
\]
\end{proof}
Substituting this into (\ref{mgeneqn}), we complete the proof of
Theorem~\ref{mgrogers}.
\section{Some notes on the $q$-exponential operator identities }
In this section, we will revisit the theory of $q$-exponential operator using Proposition~\ref{qppmotivation}.

Using the $q$-derivative operator, we can construct the $q$-exponential operator $T(y \mathcal{D}_{q, x})$ by
\begin{equation}
T(y \mathcal{D}_{q, x})=\sum_{n=0}^\infty \frac{(y \mathcal{D}_{q, x})^n }{(q; q)_n}.
\label{liueqn1}
\end{equation}
If we replace $q$ by $q^{-1}$ in the above equation, we obtain the dual operator of $T(y \mathcal{D}_{q, x})$
as follows
\begin{equation}
T(y \mathcal{D}_{q^{-1}, x})=\sum_{n=0}^\infty \frac{(-y \mathcal{D}_{q^{-1}, x})^n q^{n(n+1)/2}}{(q; q)_n}.
\label{liueqn2}
\end{equation}
\begin{defn}\label{opedefn} If $f(x)$ is analytic near $x=0,$ then, we define
\begin{align*}
T(y \mathcal{D}_{q, x})\{f(x)\}&=\sum_{n=0}^\infty \frac{y^n }{(q; q)_n}\mathcal{D}_{q, x}^n \{f(x)\},\\
T(y \mathcal{D}_{q^{-1}, x})\{f(x)\}&=\sum_{n=0}^\infty \frac{(-y)^n  q^{n(n+1)/2} }{(q; q)_n}\mathcal{D}_{q^{-1}, x}^n \{f(x)\}.
\end{align*}
\end{defn}
The following lemma is a modified version of \cite[Theorems~1 and 2]{Liu2010}, which is
the $q$-exponential operator form of Proposition~\ref{qppmotivation}.
This lemma indicates that the $q$-exponential operator method has a solid mathematical foundation.
\begin{lem}\label{liulemope} Suppose that $f(x, y)$ is a two-variable analytic function near $(x, y)=(0, 0).$
Then
\begin{itemize}
\item[(i)]
$f(x, y)=T(y \mathcal{D}_{q, x})\{f(x, 0)\},$ if and only if  $\partial_{q, x}\{f\}=\partial_{q, y}\{f\}$.
\item[(ii)]
$f(x, y)=T(y \mathcal{D}_{q^{-1}, x})\{f(x, 0)\},$ if and only if $\partial_{q^{-1}, x}\{f\}=\partial_{q^{-1}, y}\{f\}$.
\end{itemize}
\end{lem}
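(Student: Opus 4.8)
The plan is to reduce both equivalences to Proposition~\ref{qppmotivation}, which already characterizes the equation $\partial_{q,x}\{f\}=\partial_{q,y}\{f\}$ as being equivalent to expandability of $f$ in terms of the Rogers--Szeg\H{o} polynomials (and the $q^{-1}$ statement in terms of the Stieltjes--Wigert polynomials). The bridge from that expansion statement to the operator statement here is a single computation, namely that $T(y\mathcal{D}_{q,x})$ sends the monomial $x^m$ to $h_m(x,y|q)$. Once this is in hand, $T(y\mathcal{D}_{q,x})\{f(x,0)\}$ is nothing but the $h_n$-expansion of $f$ reconstructed from its restriction to $y=0$, and the lemma follows from Proposition~\ref{qppmotivation} together with the uniqueness of Maclaurin coefficients.

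First I would establish the key identity. Using the higher-order $q$-derivative $\mathcal{D}_{q,x}^n\{x^m\}=(q;q)_m x^{m-n}/(q;q)_{m-n}$ (which vanishes for $n>m$) and the definition \reff{liueqn1}, I compute
\begin{equation*}
T(y\mathcal{D}_{q,x})\{x^m\}=\sum_{n=0}^m \frac{y^n}{(q;q)_n}\frac{(q;q)_m}{(q;q)_{m-n}}x^{m-n}
=\sum_{n=0}^m {m\brack n}_q x^{m-n}y^n=h_m(x,y|q),
\end{equation*}
the last step using the definition \reff{qd:eqn3} of the $q$-binomial coefficient and the symmetry of $h_m$ in its arguments. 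A parallel computation, based on \reff{liueqn2} and on the observation that replacing $q$ by $q^{-1}$ in \reff{liueqn1} reproduces \reff{liueqn2} exactly, gives $T(y\mathcal{D}_{q^{-1},x})\{x^m\}=g_m(x,y|q)$. Since part (ii) is deduced from part (ii) of Proposition~\ref{qppmotivation} in the same manner, I would treat only (i) in detail.

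For (i), suppose first that $\partial_{q,x}\{f\}=\partial_{q,y}\{f\}$. By Proposition~\ref{qppmotivation}(i) there is a sequence $\{\alpha_n\}$ with $f(x,y)=\sum_{n\ge 0}\alpha_n h_n(x,y|q)$; setting $y=0$ and using $h_n(x,0|q)=x^n$ gives the Maclaurin expansion $f(x,0)=\sum_{n\ge 0}\alpha_n x^n$. Applying $T(y\mathcal{D}_{q,x})$ termwise and invoking the key identity returns $\sum_{n\ge 0}\alpha_n h_n(x,y|q)=f(x,y)$, which is the asserted operator representation. Conversely, if $f(x,y)=T(y\mathcal{D}_{q,x})\{f(x,0)\}$, I would write the Maclaurin expansion $f(x,0)=\sum_{n\ge 0}\beta_n x^n$, apply the operator termwise to obtain $f(x,y)=\sum_{n\ge 0}\beta_n h_n(x,y|q)$, and conclude from the converse half of Proposition~\ref{qppmotivation}(i) that $\partial_{q,x}\{f\}=\partial_{q,y}\{f\}$.

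The step that genuinely needs care, and the main obstacle, is the termwise application of the infinite operator $T(y\mathcal{D}_{q,x})$ to the infinite series $\sum_n\beta_n x^n$, i.e. the interchange turning $\sum_n \frac{y^n}{(q;q)_n}\mathcal{D}_{q,x}^n\{f(x,0)\}$ into $\sum_m \beta_m h_m(x,y|q)$. To justify it I would use that analyticity of $f(x,0)$ near $x=0$ forces a geometric bound $|\beta_m|\le M\rho^{-m}$ on its coefficients, combine this with the explicit form of $\mathcal{D}_{q,x}^n\{x^m\}$ above, and verify that the resulting double series converges absolutely on a small polydisc about $(0,0)$; absolute convergence then legitimizes the rearrangement and simultaneously shows that $T(y\mathcal{D}_{q,x})\{f(x,0)\}$ is analytic there. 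With this interchange secured, both implications of (i) are immediate from Proposition~\ref{qppmotivation}, and (ii) follows verbatim with $h_n$ replaced by $g_n$.
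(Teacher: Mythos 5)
Your part (i) is correct, but note that it is not the paper's argument: the paper proves the lemma directly, without invoking Proposition~\ref{qppmotivation}. For the implication ``operator form $\Rightarrow$ equation'' the paper applies $\partial_{q,x}$ and $\partial_{q,y}$ term by term to the series $\sum_n y^n\mathcal{D}^n_{q,x}\{f(x,0)\}/(q;q)_n$, which is legitimate because a $q$-derivative is a difference quotient, so termwise application needs only pointwise convergence; for the converse it writes $f(x,y)=\sum_n A_n(x)y^n$, feeds this into the equation to get the recursion $A_n=\mathcal{D}^n_{q,x}\{A_0\}/(q;q)_n$ with $A_0(x)=f(x,0)$, and recognizes the operator series. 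Your route instead goes through Proposition~\ref{qppmotivation} and the monomial identity $T(y\mathcal{D}_{q,x})\{x^m\}=h_m(x,y|q)$ (which the paper records separately as \reff{liueqn3}), paying for this with a rearrangement of a double series. For (i) this payment can be made exactly as you sketch, because for $0<q<1$ the $q$-binomial coefficients are uniformly bounded, ${m\brack n}_q\le (q;q)_\infty^{-2}$, so geometric decay of the Maclaurin coefficients of $f(x,0)$ yields absolute convergence of $\sum_m\beta_m h_m(x,y|q)$ on a small polydisc.

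The genuine gap is the final claim that ``(ii) follows verbatim with $h_n$ replaced by $g_n$.'' It does not, because the quantitative fact underlying your interchange fails for $g_m$: the coefficients of $g_m(x,y|q)$ are ${m\brack k}_q\,q^{k(k-m)}$, and $q^{k(k-m)}$ is of size $q^{-m^2/4}$ for $k$ near $m/2$, so they are unbounded. A geometric bound $|\beta_m|\le M\rho^{-m}$ therefore gives no control: for $f(x,0)=1/(1-x)$, i.e.\ $\beta_m=1$, the series $\sum_m \beta_m g_m(x,y|q)$ diverges for every $x,y>0$ (its middle terms exceed $q^{-m^2/4}\min(x,y)^m$), and the operator series itself, computed termwise, equals $\sum_n y^n\big/\prod_{i=0}^{n}(1-xq^{-i})$, which has poles at every $x=q^i$ accumulating at the origin, hence is not analytic near $(0,0)$. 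So in (ii) both implications require arguments that actually use the hypotheses. In the direction ``equation $\Rightarrow$ operator form,'' the needed absolute convergence is true but must be extracted from the absolutely convergent \emph{two}-variable Taylor expansion of $f$: the equation forces $\alpha_{j,n}=\alpha_{j+n,0}{j+n\brack n}_{q^{-1}}$, and summability of $\sum_{j,n}|\alpha_{j,n}|r^{j+n}$ then imposes superexponential decay on $\beta_m=\alpha_{m,0}$, which is what compensates the $q^{k(k-m)}$ blow-up; the one-variable bound cannot do this. In the direction ``operator form $\Rightarrow$ equation,'' the hypothesis provides only pointwise convergence of the iterated operator series, so your rearrangement into $\sum_m\beta_m g_m$ has no justification at all; here one really needs the paper's device of termwise $q$-differentiation of a pointwise convergent series, which bypasses rearrangement entirely and is why the paper's proof, unlike yours, transfers verbatim from (i) to (ii).
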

\begin{proof} We only prove (i). The proof of (ii) is similar to that of (i), so is omitted.
If $f(x, y)=T(y \mathcal{D}_{q, x})\{f(x, 0)\},$ then, by a direct computation, we find that
$\partial_{q, x}\{f\}=\partial_{q, y}\{f\}=T(y \mathcal{D}_{q, x})\mathcal{D}_{q, x}\{f(x, 0)\}.$
Conversely, if $f$ is a two-variable analytic function near $(x, y)=(0, 0),$
which satisfying $\partial_{q, x}\{f\}=\partial_{q, y}\{f\},$ then,  we may assume that
\[
f(x, y)=\sum_{n=0}^\infty A_n(x) y^n.
\]
Substituting the equation into the $q$-partial differential equation,
$\partial_{q, x}\{f\}=\partial_{q, y}\{f\}$, we easily find that
\[
A_n(x)=\frac{\mathcal{D}_{q, x}\{A_n(x)\}}{1-q^n}=\cdots=\frac{\mathcal{D}^n_{q, x}\{A_0(x)\}}{(q; q)_n}.
\]
It is easily seen that $A_0(x)=f(x, 0).$ Thus,  we find that
\[
f(x, y)=\sum_{n=0}^\infty \frac{\mathcal{D}^n_{q, x}\{f(x, 0)\}}{(q; q)_n}y^n
=T(y \mathcal{D}_{q, x})\{f(x, 0)\}.
\]
\end{proof}
\begin{prop} \label{cliupp}Suppose that $T(y \mathcal{D}_{q, x})$ and $T(y \mathcal{D}_{q^{-1}, x})$
are defined by $(\ref{liueqn1})$. Then
\begin{itemize}
\item[(i)] For $\max\{|xs|, |xt|, |ys|, |yt|\}<1$, we have the operator identity
\[
T(y \mathcal{D}_{q, x})\left\{\frac{1}{(xs, xt; q)_\infty}\right\}
=\frac{(xyst; q)_\infty}{(xs, xt, ys, yt; q)_\infty}.
\]
\item[(ii)] If $|xyst/q|<1,$ then, we have the operator identity
\[
T(y \mathcal{D}_{q^{-1}, x})\left\{(xs, xt; q)_\infty\right\}
=\frac{(xs, xt, ys, yt; q)_\infty}{(xyst/q; q)_\infty}.
\]
\end{itemize}
\end{prop}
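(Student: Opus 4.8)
The plan is to read off this proposition from Lemma~\ref{liulemope}, which is the operator reformulation of Proposition~\ref{qppmotivation} tailored exactly for such statements. For part~(i) I would denote by $F(x,y)$ the right-hand side $(xyst;q)_\infty/(xs,xt,ys,yt;q)_\infty$ and show that the left-hand side is precisely $T(y\mathcal{D}_{q,x})\{F(x,0)\}$. The argument splits into three verifications: that $F$ is analytic near $(0,0)$, that its initial data $F(x,0)$ matches the argument of the operator, and that $F$ satisfies the $q$-heat-type equation $\partial_{q,x}\{F\}=\partial_{q,y}\{F\}$. The first is immediate from Hartog's theorem (Theorem~\ref{hartogthm}): under $\max\{|xs|,|xt|,|ys|,|yt|\}<1$ each denominator factor is a nonvanishing analytic function of its variable and the numerator is analytic, so $F$ is jointly analytic. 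For the initial data, setting $y=0$ makes $(ys,yt;q)_\infty$ and $(xyst;q)_\infty$ both equal to $1$, so $F(x,0)=1/(xs,xt;q)_\infty$, which is exactly the function inside the operator.

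The decisive step is the $q$-partial differential equation, carried out by the same ``direct computation'' used for the $q$-Mehler formula in Theorem~\ref{aqmehler}. Using $(z;q)_\infty=(1-z)(qz;q)_\infty$ I would obtain $F(qx,y)=F(x,y)\,(1-xs)(1-xt)/(1-xyst)$, and hence
\[
\partial_{q,x}\{F\}=\frac{F(x,y)}{x}\left(1-\frac{(1-xs)(1-xt)}{1-xyst}\right)
=F(x,y)\,\frac{(s+t)-(x+y)st}{1-xyst}.
\]
Because $F$ is symmetric in $x$ and $y$ and the final factor is symmetric in $x,y$ as well, the identical computation gives the same expression for $\partial_{q,y}\{F\}$, so the two $q$-partial derivatives coincide. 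Lemma~\ref{liulemope}(i) then yields $F(x,y)=T(y\mathcal{D}_{q,x})\{F(x,0)\}=T(y\mathcal{D}_{q,x})\{1/(xs,xt;q)_\infty\}$, which is the claimed identity.

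Part~(ii) is the mirror argument: I would set $G(x,y)=(xs,xt,ys,yt;q)_\infty/(xyst/q;q)_\infty$, note $G(x,0)=(xs,xt;q)_\infty$, and verify $\partial_{q^{-1},x}\{G\}=\partial_{q^{-1},y}\{G\}$ by the analogous telescoping with $(z/q;q)_\infty=(1-z/q)(z;q)_\infty$, giving $G(q^{-1}x,y)=G(x,y)\,(1-xs/q)(1-xt/q)/(1-xyst/q^2)$ and a common symmetric expression for both $q^{-1}$-partial derivatives; then Lemma~\ref{liulemope}(ii) finishes the proof under $|xyst/q|<1$. I do not expect a serious obstacle here: the $q$-derivative manipulations are routine. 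The only genuinely substantive point is that the formal operator series actually converges and reproduces $F$ (respectively $G$), and this is exactly what Lemma~\ref{liulemope} guarantees once analyticity near the origin is secured, so everything reduces to the bookkeeping above.
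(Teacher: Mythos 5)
Your proposal is correct and follows essentially the same route as the paper: denote the right-hand side by $f(x,y)$, check analyticity near $(0,0)$, verify the $q$-partial differential equation $\partial_{q,x}\{f\}=\partial_{q,y}\{f\}=\frac{(s+t)-(x+y)st}{1-xyst}f(x,y)$ (and its $q^{-1}$ analogue for part (ii)), and invoke Lemma~\ref{liulemope}. The only difference is that you spell out the ``direct computation'' and the part (ii) details that the paper leaves to the reader, and both of your explicit calculations are accurate.
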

Many applications of these two $q$-operator identities and their extensions  have appeared in the literature,
see, for example, \cite{Cao2009, Cao2010, CL1, CL2, Liu2003, Zhang, ZhangWang}.
The original proofs of these two operator identities in \cite{CL1, CL2} depend on the method of formal computation.
Next we use Lemma~\ref{liulemope} to give a very simple proof of  Proposition~\ref{cliupp}.
\begin{proof}  The proof of (ii) is similar to that of (ii), so we only prove (i).
If we use $f(x, y)$ to denote the right-hand side of the first equation in  Proposition~\ref{cliupp}.
Then it is easily seen that $f(x, y)$ is analytic near $(0, 0)$ and satisfies
\[
\partial_{q, x}\{f\}=\partial_{q, y}\{f\}
=\frac{s+t-(x+y)st}{1-xyst}f(x, y).
\]
Thus, by (i) in Proposition~\ref{liulemope}, we have $f(x, y)=T(y \mathcal{D}_{q, x})\{f(x, 0)\}.$
This completes the proof of Proposition~\ref{cliupp}.
\end{proof}
From Definition~\ref{opedefn}, by a direct computation, we find the $q$-exponential operator
representations of $h_n$ and $g_n$ as follows
\begin{equation}
T(y \mathcal{D}_{q, x})\{x^n\}=h_n(x, y|q), ~ T(y \mathcal{D}_{q^{-1}, x})\{x^n\}=g_n(x, y|q).
\label{liueqn3}
\end{equation}
\begin{prop}\label{liuopeppa} If $f(x)$ is analytic near $x=0,$ then $T(y \mathcal{D}_{q, x})\{f(x)\}$
is analytic near $(x, y)=(0, 0).$
\end{prop}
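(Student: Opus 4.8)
The plan is to work directly from the power-series definition of the operator in Definition~\ref{opedefn} and to prove \emph{joint} analyticity by exhibiting an absolutely convergent power series in $x$ and $y$, after which the standard converse of Proposition~\ref{mcomplexpp} (a convergent multivariate power series is holomorphic) finishes the argument.

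First I would use the hypothesis that $f$ is analytic near $x=0$ to write $f(x)=\sum_{m=0}^\infty a_m x^m$, absolutely convergent for $|x|<R$ with some $R>0$. Because the $q$-derivative is a plain difference quotient, $\mathcal{D}_{q,x}\{f(x)\}=(f(x)-f(qx))/x$, and since $|qx|<|x|<R$ both evaluations lie in the disc of convergence, so I may differentiate term by term. Iterating the elementary formula gives
\[
\mathcal{D}_{q,x}^n\{x^m\}=\frac{(q;q)_m}{(q;q)_{m-n}}x^{m-n}\quad(m\ge n),\qquad \mathcal{D}_{q,x}^n\{x^m\}=0\quad(n>m).
\]
Substituting this into Definition~\ref{opedefn} and collecting the powers of $x$ and $y$ through $i=m-n$, $j=n$, I obtain the expansion
\[
T(y\mathcal{D}_{q,x})\{f(x)\}=\sum_{i,j=0}^\infty a_{i+j}{i+j\brack j}_q x^i y^j .
\]

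The only genuine content is the uniform bound on the $q$-binomial weights. Since $0<q<1$, one has $(q;q)_{i+j}/(q;q)_i=\prod_{l=i+1}^{i+j}(1-q^l)\le 1$ and $(q;q)_j\ge (q;q)_\infty>0$, whence ${i+j\brack j}_q\le 1/(q;q)_\infty$ for all $i,j$. Consequently the series of absolute values is dominated by
\[
\frac{1}{(q;q)_\infty}\sum_{m=0}^\infty |a_m|\sum_{i+j=m}|x|^i|y|^j
\le \frac{1}{(q;q)_\infty}\sum_{m=0}^\infty (m+1)|a_m|\,t^m,\qquad t:=\max\{|x|,|y|\},
\]
where the regrouping by $m=i+j$ is legitimate because all terms are nonnegative. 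As $\sum_m (m+1)|a_m|t^m$ has the same radius of convergence $R$ as $f$, this majorant converges whenever $t<R$.

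Hence the double series for $T(y\mathcal{D}_{q,x})\{f(x)\}$ converges absolutely on the polydisc $\{|x|<R,\ |y|<R\}$, so it is an absolutely convergent power series in the two variables there; by the converse of Proposition~\ref{mcomplexpp} (equivalently, by checking separate analyticity and invoking Hartogs' Theorem~\ref{hartogthm}) it represents a function analytic near $(x,y)=(0,0)$. The main obstacle is precisely the domination step: one must be sure the $q$-binomial factors introduced by repeated $q$-differentiation do not enlarge the coefficients enough to destroy convergence, and the clean bound ${i+j\brack j}_q\le 1/(q;q)_\infty$ settles this, reducing everything else to routine bookkeeping.
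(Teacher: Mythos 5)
Your proof is correct, and it differs from the paper's in its key estimate. The paper also starts from $f(x)=\sum_k a_kx^k$ and the action of $\mathcal{D}_{q,x}^n$ on monomials, but it keeps the operator in iterated form $\sum_n \frac{y^n}{(q;q)_n}\mathcal{D}^n_{q,x}\{f\}(x)$, controls coefficients via the root test (from $\limsup_n|a_n(q;q)_n|^{1/n}\le 1$ it extracts $|a_{n+m}(q;q)_{n+m}|\le(1+r)^{n+m}$ for all $n\ge N$), splits off the first $N$ terms, shows the tail converges uniformly on a small bidisc to get analyticity in $y$, and then invokes Hartogs' Theorem~\ref{hartogthm} for joint analyticity. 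Note that the paper's step $\limsup_n|a_n|^{1/n}\le 1$ tacitly assumes the radius of convergence is at least $1$; this is what its ``without loss of generality'' must cover, via the rescaling $x\mapsto\lambda x$, $y\mapsto\lambda y$, under which $T(y\mathcal{D}_{q,x})$ transforms covariantly. Your argument replaces all of this with the single uniform bound ${i+j\brack j}_q\le 1/(q;q)_\infty$, which exhibits $T(y\mathcal{D}_{q,x})\{f\}$ as an explicitly dominated double power series: you obtain absolute convergence, hence joint analyticity, on the full polydisc $\max\{|x|,|y|\}<R$, with no rescaling, no threshold $N$, and Hartogs needed only as a convenience, since an absolutely convergent multivariate power series is analytic outright. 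The one point worth making explicit is that the identification of the iterated sum in Definition~\ref{opedefn} with your double series $\sum_{i,j}a_{i+j}{i+j\brack j}_q x^iy^j$ is itself justified a posteriori by the absolute convergence you establish (Fubini for series); as written, the expansion appears before the bound, so one sentence noting that the regrouping is legitimized retroactively would close the loop. Modulo that ordering remark, your proof is complete, somewhat cleaner than the paper's, and quantitatively sharper in that it names the explicit domain of validity.
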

\begin{proof} Since $f(x)$ is analytic near $x=0,$ there exists a positive number $r<1$ (without loss of generality),
such that
\begin{equation}
f(x)=\sum_{k=0}^\infty a_k x^k,\quad |x|<r<1.
\label{liueqn4}
\end{equation}
From Definition~\ref{opedefn} and by a direct computation, we conclude that
\begin{align*}
T(y \mathcal{D}_{q, x})\{f(x)\}&=\sum_{n=0}^\infty \frac{y^n}{(q; q)_n} \sum_{k=n}^\infty \frac{(q; q)_k}{(q; q)_{k-n}}a_k x^{k-n}\\
&=\sum_{n=0}^\infty \frac{y^n}{(q; q)_n} \sum_{m=0}^\infty \frac{(q; q)_{n+m}a_{n+m}}{(q; q)_{m}} x^{m}.
\end{align*}
Since $f(x)$ is analytic in $|x|<r,$ we have, by the root test, that
$
\lim_{n\to \infty} \text{sup}|a_n|^{1/n}\le 1.
$
It is obvious that $\lim_{n\to \infty}|(q; q)_{n+m}|=|(q; q)_\infty|<\infty. $ It follows that
\[
\lim_{n\to \infty} \text{sup}|a_{n+m}(q; q)_{n+m}|^{1/(n+m)}\le 1.
\]
Thus, for $r$ in (\ref{liueqn4}), there exists a large integer $N$ such that $|a_{n+m}(q; q)_{n+m}|\le (1+r)^{m+n}$
for all $n\ge N$. It follows that
\begin{align*}
\Big|\sum_{n=N}^\infty \frac{y^n}{(q; q)_n} \sum_{m=0}^\infty \frac{(q; q)_{n+m}a_{n+m}}{(q; q)_{m}} x^{m}\Big|
\le \sum_{n=N}^\infty \frac{|(1+r)y|^n}{(q; q)_n}\sum_{m=0}^\infty \frac{(1+r)^{m}|x|^m}{(q; q)_{m}}
\end{align*}
We further assume that $|x|\le r/2$, then from the above equation, we deduce that
\begin{align*}
\Big|\sum_{n=N}^\infty \frac{y^n}{(q; q)_n} \sum_{m=0}^\infty \frac{(q; q)_{n+m}a_{n+m}}{(q; q)_{m}} x^{m}\Big|
\le \sum_{n=N}^\infty \frac{|(1+r)y|^n}{(q; q)_n}\sum_{m=0}^\infty \frac{(r(r+1)/2)^{m}}{(q; q)_{m}},
\end{align*}
which is a uniformly and absolutely  convergent series for $|y|<1/(1+r).$ Thus $T(y \mathcal{D}_{q, x})\{f(x)\}$
is analytic near $y=0.$ Thus, by Hartog's theorem, we know that $T(y \mathcal{D}_{q, x})\{f(x)\}$ is analytic near $(x, y)=(0, 0).$
\end{proof}
A deep result of the $q$-exponential operator is the following operator identity \cite[Eq.(3.1)]{Liu2010}.
\begin{prop}\label{liuopeppb}For $\max\{|as|, |at|, |au|, |bs|, |bt|, |bu|, |abstu/v|\}<1, $
we have
\begin{align*}
T(b \mathcal{D}_{q, a})\left\{\frac{(av; q)_\infty}{(as, at, au; q)_\infty}\right\}
&=\frac{(av, bv, abstu/v; q)_\infty}{(as, at, au, bs, bt, bu; q)_\infty}\\
&\quad \times{_3\phi_2}\left({{v/s, v/t, v/u}\atop{av, bv}}; q, \frac{abstu}{v}\right).
\end{align*}
\end{prop}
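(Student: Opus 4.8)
The plan is to follow the template used for Proposition~\ref{cliupp}: let $F(a,b)$ denote the right-hand side of the asserted identity, check that $F$ is analytic near $(0,0)$, that it reduces at $b=0$ to the function being operated on, and that it satisfies the $q$-partial differential equation $\partial_{q,a}\{F\}=\partial_{q,b}\{F\}$. Then the converse direction of Lemma~\ref{liulemope}(i) gives $F(a,b)=T(b\mathcal{D}_{q,a})\{F(a,0)\}$, and substituting the boundary value identifies this with the left-hand side, proving the proposition.

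First I would dispose of the two routine points. Writing $F$ out as
\[
F(a,b)=\frac{(av,bv,abstu/v;q)_\infty}{(as,at,au,bs,bt,bu;q)_\infty}\sum_{n=0}^\infty\frac{(v/s,v/t,v/u;q)_n}{(q,av,bv;q)_n}\left(\frac{abstu}{v}\right)^n,
\]
the ratio test shows that the inner series converges for $|abstu/v|<1$, so on the stated polydisc $F$ is analytic in $a$ and in $b$ separately; by Hartog's theorem (Theorem~\ref{hartogthm}) it is analytic near $(0,0)$. Setting $b=0$ turns each of $(bv;q)_\infty$, $(abstu/v;q)_\infty$, $(bs;q)_\infty$, $(bt;q)_\infty$, $(bu;q)_\infty$ into $(0;q)_\infty=1$ and forces the series to collapse to its $n=0$ term, so $F(a,0)=(av;q)_\infty/(as,at,au;q)_\infty$, which is exactly the operand of $T(b\mathcal{D}_{q,a})$.

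The hard part is the $q$-partial differential equation, because, unlike the pure $q$-product in Proposition~\ref{cliupp}, the summand here depends on $a$ and $b$ both through $(abstu/v)^n$ and through $(av,bv;q)_n$. The device I would use is to absorb the latter into the prefactor by means of $(av;q)_\infty/(av;q)_n=(avq^n;q)_\infty$, rewriting
\[
F(a,b)=\frac{(abstu/v;q)_\infty}{(as,at,au,bs,bt,bu;q)_\infty}\sum_{n=0}^\infty\frac{(v/s,v/t,v/u;q)_n}{(q;q)_n}(avq^n,bvq^n;q)_\infty\left(\frac{abstu}{v}\right)^n,
\]
so that every summand is a product of infinite $q$-shifted factorials times a monomial in $ab$. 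Applying $\partial_{q,a}$ and $\partial_{q,b}$ to each summand through the $q$-product rule together with the relation $(z;q)_\infty=(1-z)(qz;q)_\infty$, and accounting for the index shift that $\partial_{q,a}\{a^n\}=(1-q^n)a^{n-1}$ produces on the monomial factor, reduces the desired equality to a term-by-term identity after a harmless reindexing. This is the genuine computation and the step I expect to be the main obstacle, although it is mechanical once the series is put in this form; equivalently one may verify the single functional equation $(b-a)F(a,b)=bF(qa,b)-aF(a,qb)$, which is what $\partial_{q,a}\{F\}=\partial_{q,b}\{F\}$ unwinds to.

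With the $q$-partial differential equation established, the converse of Lemma~\ref{liulemope}(i) yields $F(a,b)=T(b\mathcal{D}_{q,a})\{F(a,0)\}$. Inserting $F(a,0)=(av;q)_\infty/(as,at,au;q)_\infty$ shows that the left-hand side of the proposition equals $F(a,b)$, the right-hand side; Proposition~\ref{liuopeppa} guarantees that this left-hand side is itself analytic near $(0,0)$, so the two analytic functions agree throughout the polydisc, completing the proof.
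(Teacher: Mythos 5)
Your proposal is correct, but it runs in the opposite direction from the paper's proof, and the two routes are genuinely different. The paper starts from the \emph{left-hand} side: it sets $f(a,b)=T(b\mathcal{D}_{q,a})\{(av;q)_\infty/(as,at,au;q)_\infty\}$, gets analyticity from Proposition~\ref{liuopeppa} and the equation $\partial_{q,a}\{f\}=\partial_{q,b}\{f\}$ for free from Lemma~\ref{liulemope}, expands $f$ in Rogers-Szeg\H{o} polynomials via Proposition~\ref{qppmotivation}, reads off the coefficients from the boundary value at $b=0$ using the $q$-binomial theorem and (\ref{rseqn1}), resums with Carlitz's formula (Theorem~\ref{Carlitzthm}), and finally needs Sears' ${}_3\phi_2$ transformation to convert the resulting series (a ${}_3\phi_2$ with argument $bs$) into the stated right-hand side. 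You instead take the stated right-hand side $F(a,b)$ and verify the three characterizing properties (analyticity, boundary value, $q$-PDE), then invoke the converse direction of Lemma~\ref{liulemope}(i); no Carlitz formula and no Sears transformation are needed. The one step you left as a sketch --- the PDE check --- does in fact close, and it is worth recording why: with $S_n$ denoting the $n$-th summand of your rewritten series and $w=stu/v$, a direct computation with the $q$-difference quotients gives
\[
\partial_{q,a}\{S_n\}-\partial_{q,b}\{S_n\}
=-\frac{(a-b)}{ab(1-abw)}\Bigl[(1-q^n)S_n-(1-q^{n+1})S_{n+1}\Bigr],
\]
where the two key cancellations are $abq^n\cdot\frac{stu}{vq^n}=abw$ (killing the $abw$ term) and the fact that $S_{n+1}/S_n$ exactly absorbs the factor $w(1-q^nv/s)(1-q^nv/t)(1-q^nv/u)\,ab/[(1-avq^n)(1-bvq^n)]$ up to $(1-q^{n+1})$. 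Summing over $n$ telescopes to zero since $(1-q^0)S_0=0$ and $S_n\to0$, so your ``harmless reindexing'' is precisely a telescoping argument. In terms of trade-offs: your verification route is more self-contained and elementary, but it works only because the answer is given in advance and it pays for this with the telescoping computation on an infinite ${}_3\phi_2$-type series; the paper's route is a derivation that produces the right-hand side from machinery already established earlier in the paper and never has to $q$-differentiate the explicit series at all.
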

Now we will give a new proof of this identity.
\begin{proof} If we use $f(a, b)$ to denote the left-hand side of the equation in Proposition~\ref{liuopeppb},
then from Proposition~\ref{liuopeppa}, we know that $f(a, b)$ is analytic near $(a, b)=(0, 0).$ Using Lemma~\ref{liulemope},
we find that $\partial_{q, a}\{f\}=\partial_{q, b}\{f\}.$ Thus, there exists a sequence $\{\alpha_n\}$ independent of
$a$ and $b$ such that
\begin{equation}
f(a, b)=\sum_{n=0}^\infty \alpha_n h_n(a, b|q).
\label{liueqn5}
\end{equation}
Taking $b=0$ in the above equation and using $h_n(a, 0|q)=a^n$, we immediately deduce that
\begin{equation}
\sum_{n=0}^\infty \alpha_n a^n=\frac{(av; q)_\infty}{(as, at, au; q)_\infty}.
\label{liueqn6}
\end{equation}
Using the $q$-binomial theorem and the generating function for $h_n,$ we obtain
\[
\frac{(av; q)_\infty}{(as; q)_\infty}=\sum_{n=0}^\infty \frac{(v/s; q)_n (as)^n}{(q; q)_n}, \quad
\frac{1}{(at, au; q)_\infty}=\sum_{n=0}^\infty h_n(t, u|q) \frac{a^n}{(q; q)_n}.
\]
Substituting the above two equations into (\ref{liueqn6}) and equating the coefficients of $a^n,$
we deduce that
\[
\alpha_n=\frac{1}{(q; q)_n}\sum_{k=0}^n {n\brack k}_q (v/s; q)_k s^k h_{n-k}(t, u|q).
\]
Substituting the above equation into (\ref{liueqn6}), we arrive at
\[
f(a, b)=\sum_{n=0}^\infty \frac{h_n(a, b|q)}{(q; q)_n} \sum_{k=0}^n {n\brack k}_q (v/s; q)_k s^k h_{n-k}(t, u|q).
\]
Interchanging the order of the summation of the above equation and simplifying, we find that
\begin{equation}
f(a, b)=\sum_{k=0}^\infty \frac{s^k(v/s; q)_k}{(q; q)_k}
\sum_{m=0}^\infty \frac{h_{m+k}(a, b|q)h_m(t, u|q)}{(q; q)_m}.
\label{liueqn7}
\end{equation}
Using the Carlitz formula in Theorem~\ref{Carlitzthm}, we find that the inner summation of the above equation
equals
\[
\frac{(abtu; q)_\infty}{(at, au, bt, bu; q)_\infty}
\sum_{j=0}^k {k\brack j}_q \frac{b^j a^{k-j}(at, au; q)_j}{(abtu; q)_j}.
\]
Substituting the above equation into (\ref{liueqn7}) and then interchanging the order of the summation,
we conclude that
\[
f(a, b)=\frac{(av, abtu; q)_\infty}{(as, at, au, bt, bu; q)_\infty}
{_3\phi_2}\left({{v/s, at, au}\atop{av, abtu}}; q, bs\right).
\]
Using Sears' $_3\phi_2$ transformation formula (see, for example, \cite[Theorem~3]{Liu2003}), we have
\[
{_3\phi_2}\left({{v/s, at, au}\atop{av, abtu}}; q, bs\right)
=\frac{(abstu/v, bv; q)_\infty}{(abtu, bs; q)_\infty}
{_3\phi_2}\left({{v/s, v/t, v/u}\atop{av, bv}}; q, \frac{abstu}{v}\right).
\]
Combining the above two equations, we complete the proof of Proposition~\ref{liuopeppb}.
\end{proof}
In the same way, we can also obtain the following $q$-exponential operator identity \cite[Eq.(3.2)]{Liu2010}.
\begin{prop}\label{liuopeppc}For $\max\{|av|, |bv|, |abstu/qv|\}<1, $
we have
\begin{align*}
T(b \mathcal{D}_{q^{-1}, a})\left\{\frac{(as, at, au; q)_\infty}{(av; q)_\infty}\right\}
&=\frac{(as, at, au, bs, bt, bu; q)_\infty}{(av, bv, abstu/qv; q)_\infty}\\
&\quad \times{_3\phi_2}\left({{s/v, t/v, u/v}\atop{q/av, q/bv}}; q, q\right).
\end{align*}
\end{prop}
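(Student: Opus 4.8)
The plan is to mirror the proof of Proposition~\ref{liuopeppb}, replacing the base $q$ by $q^{-1}$ throughout and using the Stieltjes-Wigert polynomials $g_n$ in place of the Rogers-Szeg\H{o} polynomials $h_n$. Write $f(a,b)$ for the left-hand side. First I would verify that $f(a,b)$ is analytic near $(a,b)=(0,0)$: the operand $(as,at,au;q)_\infty/(av;q)_\infty$ is analytic near $a=0$, and an argument identical to that of Proposition~\ref{liuopeppa}, now carried out with the weights $q^{n(n+1)/2}$ of \reff{liueqn2} (which only improve convergence), shows that applying $T(b\mathcal{D}_{q^{-1},a})$ preserves analyticity in a bidisc about the origin.

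Since $f(a,b)=T(b\mathcal{D}_{q^{-1},a})\{f(a,0)\}$ by construction, part (ii) of Lemma~\ref{liulemope} gives $\partial_{q^{-1},a}\{f\}=\partial_{q^{-1},b}\{f\}$. Then part (ii) of Proposition~\ref{qppmotivation} furnishes a sequence $\{\beta_n\}$ independent of $a$ and $b$ with $f(a,b)=\sum_{n=0}^\infty \beta_n g_n(a,b|q)$. Setting $b=0$ and using $g_n(a,0|q)=a^n$, the coefficients are determined by $\sum_n \beta_n a^n=(as,at,au;q)_\infty/(av;q)_\infty$. I would expand the right-hand side by factoring it as $\bigl((as;q)_\infty/(av;q)_\infty\bigr)(at,au;q)_\infty$, applying the $q$-binomial theorem to the first factor (which yields $\sum_k (s/v;q)_k v^k a^k/(q;q)_k$) and the generating function \reff{rseqn2} for $g_n$ to the second, and then equating coefficients of $a^n$. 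This produces $\beta_n$ as a convolution of $(s/v;q)_k v^k$ against $(-1)^{n-k}q^{(n-k)(n-k-1)/2}g_{n-k}(t,u|q)$.

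Substituting this expression for $\beta_n$ back into $f(a,b)=\sum_n \beta_n g_n(a,b|q)$ and interchanging the order of summation, I would arrive at
\[
f(a,b)=\sum_{k=0}^\infty \frac{(s/v;q)_k v^k}{(q;q)_k}\sum_{m=0}^\infty \frac{(-1)^m q^{m(m-1)/2}g_{m+k}(a,b|q)g_m(t,u|q)}{(q;q)_m}.
\]
The inner sum is the $g_n$-analogue of Carlitz's formula (Theorem~\ref{Carlitzthm}); it can be established by exactly the method used there (differentiate the generating function \reff{rseqn2} $k$ times in $t$, then invoke Proposition~\ref{qppmotivation}(ii)), and it evaluates the inner sum in closed form as a prefactor of infinite products times a finite $H_k$-type sum. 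After substituting this and simplifying, $f(a,b)$ becomes an infinite-product prefactor times a single nonterminating ${_3\phi_2}$, which I would finally recast as the stated series ${_3\phi_2}\bigl(s/v,t/v,u/v;\,q/av,q/bv;\,q,q\bigr)$ by a Sears-type ${_3\phi_2}$ transformation, the $q^{-1}$ counterpart of the step that closes the proof of Proposition~\ref{liuopeppb}.

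I expect the main obstacle to be the final two steps: pinning down the $g_n$-Carlitz evaluation with its correct powers of $q$ and signs, and then selecting the precise balanced-${_3\phi_2}$ (Sears) transformation that collapses the resulting series to argument $q$ with lower parameters $q/av,q/bv$. Both are routine in spirit but bookkeeping-heavy; by contrast, the analyticity and the $q^{-1}$-partial differential equation are immediate from the lemmas already in place, so the skeleton of the argument transfers from Proposition~\ref{liuopeppb} essentially unchanged.
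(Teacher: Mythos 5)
Your skeleton is the natural reading of the paper's one-line remark that Proposition~\ref{liuopeppc} follows ``in the same way'' as Proposition~\ref{liuopeppb}, but the first load-bearing step of your argument is wrong, and it is exactly the point where the $q$- and $q^{-1}$-theories part company. Your claim that the analyticity argument of Proposition~\ref{liuopeppa} carries over to $T(b\mathcal{D}_{q^{-1},a})$ because the weights $q^{n(n+1)/2}$ ``only improve convergence'' is backwards. The proof of Proposition~\ref{liuopeppa} rests on the fact that $\mathcal{D}^n_{q,x}\{x^k\}=\frac{(q;q)_k}{(q;q)_{k-n}}x^{k-n}$ has coefficients bounded uniformly in $k$; in the $q^{-1}$ case the corresponding coefficients $\prod_{j=0}^{n-1}(1-q^{j-k})$ grow like $q^{-nk}$, and the weight $q^{n(n+1)/2}$, which depends only on $n$, cannot control growth in $k$. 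For the particular operand here the failure shows up not merely in the estimates but in the conclusion: since $\mathcal{D}_{q^{-1},a}\left\{1/(av;q)_\infty\right\}=\frac{-v/q}{(av/q;q)_\infty}$, each application of $\mathcal{D}_{q^{-1},a}$ pushes the denominator one step inward, so $\mathcal{D}^n_{q^{-1},a}\left\{(as,at,au;q)_\infty/(av;q)_\infty\right\}$ has poles at $av=q,q^2,\dots,q^n$. These poles accumulate at $a=0$ as $n$ runs through the operator series, so the left-hand side is not analytic in any bidisc about $(a,b)=(0,0)$; indeed the stated right-hand side shows the same thing, since the factors $(q/av;q)_k$, $(q/bv;q)_k$ in the denominators of the ${}_3\phi_2$ vanish at $av=q^j$, $bv=q^j$. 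Consequently Lemma~\ref{liulemope}(ii) and Proposition~\ref{qppmotivation}(ii), both of which hypothesize two-variable analyticity at the origin, cannot be invoked, and everything downstream (the $g_n$-expansion, the determination of $\beta_n$, the interchange of summation, the $g_n$-Carlitz step) is unsupported. The interchange is in fact doubly suspect, because $g_{m+k}(a,b|q)$ contains terms of size $q^{-(m+k)^2/4}$, so your rearranged double sum is not absolutely convergent.

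This obstruction is precisely why the paper states Proposition~\ref{liuopeppa} only for $T(y\mathcal{D}_{q,x})$ and never claims a $q^{-1}$ analogue, and why the one $q^{-1}$-operator identity it proves in full, Proposition~\ref{cliupp}(ii), is established in the reverse direction: one starts from the closed form $\frac{(xs,xt,ys,yt;q)_\infty}{(xyst/q;q)_\infty}$, which \emph{is} analytic near the origin, verifies the $q^{-1}$-partial differential equation, and only then concludes from Lemma~\ref{liulemope}(ii) that it equals $T(y\mathcal{D}_{q^{-1},x})$ applied to its restriction to $y=0$. A repaired proof of Proposition~\ref{liuopeppc} must be organized in that direction as well, and with additional care, because here even the closed form fails to be analytic at the origin (the accumulating poles above), so the identity has to be handled as one between meromorphic functions with the hyperplanes $av=q^j$, $bv=q^j$ removed, or after clearing the offending factors. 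Your proposal, which mirrors the forward expansion used for Proposition~\ref{liuopeppb}, breaks at exactly the step where that mirroring is illegitimate; the remaining ingredients you defer (the $g_n$-analogue of Carlitz's formula and the closing Sears-type transformation) are secondary to this gap.
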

The following proposition reveals a relationship of $q$-integral and
$q$-exponential operator.
\begin{prop}\label{liuopeppd} If $T(y \mathcal{D}_{q, x})$ is defined by $(\ref{liueqn1})$, then, we have
\begin{align*}
&T(b \mathcal{D}_{q, a})\left\{\frac{(acst; q)_\infty}{(as, at, au; q)_\infty}\right\}\\
&=\frac{(cs, ct; q)_\infty}{(1-q)t (q, s/t, qt/s, au, bu; q)_\infty}\int_{s}^t \frac{(qx/s, qx/t, abux; q)_\infty}{(ax, bx, cx; q)_\infty} d_q x.
\end{align*}
\end{prop}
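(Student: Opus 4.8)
The plan is to show that both sides of the asserted identity, viewed as functions of $(a,b)$, are analytic near the origin, satisfy the same $q$-partial differential equation $\partial_{q,a}\{\cdot\}=\partial_{q,b}\{\cdot\}$, and agree on the slice $b=0$. By Proposition~\ref{qppmotivation}(i) each side then has the same expansion in $h_n(a,b|q)$, whose coefficients are pinned down by the values at $b=0$, so the two sides must coincide. Throughout write $f(a,b)$ for the left-hand side and $g(a,b)$ for the right-hand side.

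The left-hand side is the easy one. Since $(acst;q)_\infty/(as,at,au;q)_\infty$ is analytic near $a=0$, Proposition~\ref{liuopeppa} shows that $f(a,b)=T(b\mathcal{D}_{q,a})\{f(a,0)\}$ is analytic near $(0,0)$, and Lemma~\ref{liulemope}(i) gives immediately $\partial_{q,a}\{f\}=\partial_{q,b}\{f\}$. Setting $b=0$ leaves only the $n=0$ term of the operator, so $f(a,0)=(acst;q)_\infty/(as,at,au;q)_\infty$.

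The real work is on the right-hand side. First I would note that $g$ is analytic near $(0,0)$ by writing the Jackson integral as its defining series, applying the ratio test in $a$ and in $b$ separately, and invoking Hartog's theorem. For the $q$-partial differential equation, factor $g=P(a,b)\,I(a,b)$ with $P=\tfrac{(cs,ct;q)_\infty}{(1-q)t(q,s/t,qt/s;q)_\infty}\cdot\tfrac{1}{(au,bu;q)_\infty}$ and $I=\int_{s}^{t}\frac{(qx/s,qx/t,abux;q)_\infty}{(ax,bx,cx;q)_\infty}\,d_q x$. Since the Jackson integral is a sum over $x$, the operator $\partial_{q,a}$ commutes with $\int_s^t$ and acts only on the integrand; applying the higher-order $q$-derivative formula \reff{qd:eqn2} to $(abux;q)_\infty/(ax;q)_\infty$ in the variable $a$ gives $\partial_{q,a}\{I\}=(1-bu)J$, where $J=\int_{s}^{t}\frac{x(qx/s,qx/t,qabux;q)_\infty}{(ax,bx,cx;q)_\infty}\,d_q x$. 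Formula \reff{qd:eqn1} gives $\partial_{q,a}\{P\}=uP$, while $P(qa,b)=(1-au)P(a,b)$. Feeding these into the $q$-product rule $\partial_{q,a}\{PI\}=P(qa,b)\,\partial_{q,a}\{I\}+I\,\partial_{q,a}\{P\}$, and using the factorization $(abux;q)_\infty=(1-abux)(qabux;q)_\infty$ to write $I=K-abu\,J$ with $K=\int_{s}^{t}\frac{(qx/s,qx/t,qabux;q)_\infty}{(ax,bx,cx;q)_\infty}\,d_q x$, the $abu^2$ terms cancel and one obtains
\[
\partial_{q,a}\{g\}=P\big[(1-(a+b)u)J+uK\big].
\]
Because $P$, $J$, and $K$ are all symmetric in $a$ and $b$, the identical computation of $\partial_{q,b}\{g\}$ returns the same expression, so $\partial_{q,a}\{g\}=\partial_{q,b}\{g\}$.

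It then remains to evaluate $g(a,0)$. Putting $b=0$ annihilates the factors $(abux;q)_\infty$, $(bx;q)_\infty$, and $(bu;q)_\infty$, reducing the integral to the Andrews-Askey integral $\int_{s}^{t}\frac{(qx/s,qx/t;q)_\infty}{(ax,cx;q)_\infty}\,d_q x$; evaluating it by Proposition~\ref{AAintegralpp} with $(u,v)\mapsto(s,t)$ and $(a,b)\mapsto(a,c)$ and cancelling the common factors $(1-q)t$, $(q,s/t,qt/s;q)_\infty$, and $(cs,ct;q)_\infty$ collapses $g(a,0)$ to exactly $(acst;q)_\infty/(as,at,au;q)_\infty=f(a,0)$. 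Hence $f$ and $g$ share the same $h_n(a,b|q)$-expansion, so $f\equiv g$. The hard part will be the direct $q$-derivative verification in the third paragraph: the challenge is to organize the $q$-product rule together with the relation $I=K-abu\,J$ so that the asymmetric factors $(1-au)$ and $(1-bu)$ recombine into the manifestly symmetric $(1-(a+b)u)J+uK$; everything else is routine bookkeeping.
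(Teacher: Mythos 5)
Your proof is correct and follows essentially the same route as the paper: the paper likewise computes $\partial_{q,a}$ of the integral side directly (obtaining exactly your symmetric integrand $x+u-(a+b)ux$, i.e.\ your $(1-(a+b)u)J+uK$), checks it equals $\partial_{q,b}$, and evaluates at $b=0$ via the Andrews-Askey integral. The only difference is economy: the paper applies Lemma~\ref{liulemope}(i) once to the right-hand side $g$, which immediately gives $g(a,b)=T(b\mathcal{D}_{q,a})\{g(a,0)\}$ and hence the identity, so your separate treatment of the left-hand side (the appeal to Proposition~\ref{liuopeppa} and the comparison of two $h_n(a,b|q)$-expansions) is not needed.
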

\begin{proof} If we use $f(a, b)$ to denote the right-hand side of the above equation, then, it is
to verify that $f(a, b)$ is analytic near $(a, b)=(0, 0).$ A direct computation shows that
$\partial_{q, a}\{f\}=\partial_{q, b}\{f\}$ equals
\[
\frac{(cs, ct; q)_\infty}{(1-q)t (q, s/t, qt/s, au, bu; q)_\infty}\int_{s}^t
\frac{(qx/s, qx/t, qabux; q)_\infty(x+u-(a+b)ux)}
{(ax, bx, cx; q)_\infty} d_q x.
\]
Using the Andrews-Askey integral in Proposition~\ref{AAintegralpp}, we easily find that
\begin{align*}
f(a, 0)&=
\frac{(cs, ct; q)_\infty}{(1-q)t (q, s/t, qt/s, au; q)_\infty}\int_{s}^t \frac{(qx/s, qx/t; q)_\infty}{(ax, cx; q)_\infty} d_q x\\
&=\frac{(acst, q)_\infty}{(au, as, at; q)_\infty}.
\end{align*}
Thus, using (i) in Lemma~\ref{liulemope}, we immediately conclude that
\[
f(a, b)=T(b\mathcal{D}_{q, a})\{f(a, 0)\}
=T(b \mathcal{D}_{q, a})\left\{\frac{(acst; q)_\infty}{(as, at, au; q)_\infty}\right\}.
\]
\end{proof}
Combining Proposition~\ref{liuopeppb} and Proposition~\ref{liuopeppd}, we can obtain the following
proposition, which is equivalent to \cite[Theorem~9]{Liu2010}.
\begin{prop}\label{liuopeppe} For $\max\{|as|, |bs|, |cs|, |at|, |bt|, |ct|, |abu/c|\}<1,$ we have
\begin{align*}
&\int_{s}^t \frac{(qx/s, qx/t, abux; q)_\infty}{(ax, bx, cx; q)_\infty} d_q x\\
&=\frac{(1-q)t(q, s/t, qst, acst, bcst, abu/c; q)_\infty}
{(as, bs, cs, at, bt, ct; q)_\infty}
{_3\phi_2}\left({{cs, ct, cst/u}\atop{acst, bcst}}; q, \frac{abu}{c}\right).
\end{align*}
\end{prop}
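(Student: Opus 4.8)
The key observation driving the proof is that Propositions~\ref{liuopeppb} and \ref{liuopeppd} furnish two different closed forms for one and the same quantity, namely the $q$-exponential operator image $T(b\mathcal{D}_{q,a})\{(acst;q)_\infty/(as,at,au;q)_\infty\}$. Proposition~\ref{liuopeppd} ties this image to the $q$-integral under consideration, whereas Proposition~\ref{liuopeppb}, after an appropriate specialization of its free parameter, ties it to a ${}_3\phi_2$ series. The plan is therefore to compute both evaluations, equate them, and solve the resulting linear relation for the $q$-integral.

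First I would specialize Proposition~\ref{liuopeppb} by setting $v=cst$. Under this choice $av=acst$ and $bv=bcst$, the argument $abstu/v$ collapses to $abu/c$, and the numerator parameters $v/s,v/t,v/u$ of the ${}_3\phi_2$ become $ct,cs,cst/u$; since the ${}_3\phi_2$ is symmetric in its numerator entries these coincide with the list $cs,ct,cst/u$ appearing in the target identity. Proposition~\ref{liuopeppb} thus yields
\begin{equation*}
T(b\mathcal{D}_{q,a})\left\{\frac{(acst;q)_\infty}{(as,at,au;q)_\infty}\right\}
=\frac{(acst,bcst,abu/c;q)_\infty}{(as,at,au,bs,bt,bu;q)_\infty}\,
{_3\phi_2}\left({{cs,ct,cst/u}\atop{acst,bcst}};q,\frac{abu}{c}\right).
\end{equation*}

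Next I would read off the alternative evaluation of this identical left-hand side from Proposition~\ref{liuopeppd}, in which the operator image equals $\frac{(cs,ct;q)_\infty}{(1-q)t\,(q,s/t,qt/s,au,bu;q)_\infty}$ multiplied by the sought $q$-integral. Equating the two expressions and isolating the integral, the factors $(au;q)_\infty$ and $(bu;q)_\infty$ cancel, and gathering the surviving infinite products reproduces exactly the prefactor displayed in the proposition multiplying the ${}_3\phi_2$. The argument presents no genuine obstacle---it is a one-step elimination followed by routine cancellation of $q$-shifted factorials---so the only point demanding care is the convergence bookkeeping: one intersects the hypotheses of the two propositions and appeals to analyticity (Proposition~\ref{liuopeppa} together with Hartog's theorem, Theorem~\ref{hartogthm}) to propagate the identity across the full region $\max\{|as|,|bs|,|cs|,|at|,|bt|,|ct|,|abu/c|\}<1$ asserted in the statement.
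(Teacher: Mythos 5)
Your proof is correct and is precisely the paper's own argument: the paper establishes this proposition simply by "combining Proposition~\ref{liuopeppb} and Proposition~\ref{liuopeppd}," which is exactly the specialization $v=cst$ in Proposition~\ref{liuopeppb} followed by equating with Proposition~\ref{liuopeppd} and cancelling $(au,bu;q)_\infty$, as you describe. Your computation also confirms that the factor $qst$ in the stated product should read $qt/s$, consistent with the factor $(q,s/t,qt/s;q)_\infty$ appearing in Proposition~\ref{liuopeppd} and in the Andrews--Askey integral.
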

\begin{prop} \label{liuopeppf}Let $\{f_n(x)\}$ be a sequence of analytic functions near $x=0$,
such that the series
\[
\sum_{n=0}^\infty f_n(x)
\]
converges uniformly to an analytic function $f(x)$ near $x=0$,  and the series
\[
\sum_{n=0}^\infty T(yD_{q, x})\{f_n(x)\}
\]
converges uniformly to an analytic function $f(x, y)$ near $(x, y)=(0, 0)$. Then we have
$f(x, y)=T(yD_{q, x})\{f(x)\}$, or
\[
T(yD_{q, x})\left\{\sum_{n=0}^\infty f_n(x)\right\}=\sum_{n=0}^\infty T(yD_{q, x})\{f_n(x)\}.
\]
\end{prop}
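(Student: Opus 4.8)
The plan is to avoid manipulating the operator series $T(y\mathcal{D}_{q,x})=\sum_{n\ge 0}(y\mathcal{D}_{q,x})^n/(q;q)_n$ directly against the infinite sum $\sum_n f_n$, and instead to characterize $T(y\mathcal{D}_{q,x})\{f(x)\}$ through the $q$-partial differential equation supplied by Lemma~\ref{liulemope}. Writing $g_n(x,y):=T(y\mathcal{D}_{q,x})\{f_n(x)\}$, Proposition~\ref{liuopeppa} guarantees that each $g_n$ is analytic near $(0,0)$, while part (i) of Lemma~\ref{liulemope} tells us that each $g_n$ satisfies $\partial_{q,x}\{g_n\}=\partial_{q,y}\{g_n\}$ together with the boundary value $g_n(x,0)=f_n(x)$ (the latter because only the $n=0$ term of the operator survives at $y=0$, by Definition~\ref{opedefn}). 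By hypothesis $f(x,y)=\sum_{n=0}^\infty g_n(x,y)$ converges uniformly near $(0,0)$ to an analytic function, so the analyticity condition of Lemma~\ref{liulemope}(i) is already met for $f$.

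First I would verify the boundary condition $f(x,0)=f(x)$. Since the series for $f(x,y)$ converges uniformly near $(0,0)$ and each $g_n$ is continuous, I may evaluate at $y=0$ termwise to get $f(x,0)=\sum_n g_n(x,0)=\sum_n f_n(x)=f(x)$, the last equality being the other standing hypothesis.

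Next I would establish that $f$ satisfies the same $q$-partial differential equation, $\partial_{q,x}\{f\}=\partial_{q,y}\{f\}$. The decisive observation is that, unlike an ordinary derivative, a $q$-partial derivative is a mere divided difference: $\partial_{q,x}\{f\}(x,y)=(f(x,y)-f(qx,y))/x$. Because $0<q<1$, whenever $(x,y)$ lies in a polydisc about the origin so does $(qx,y)$, so both $\sum_n g_n(x,y)$ and $\sum_n g_n(qx,y)$ converge there; subtracting the two convergent series termwise and dividing by $x$ yields $\partial_{q,x}\{f\}=\sum_n\partial_{q,x}\{g_n\}$. The identical argument with the shifted point $(x,qy)$ gives $\partial_{q,y}\{f\}=\sum_n\partial_{q,y}\{g_n\}$. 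Since $\partial_{q,x}\{g_n\}=\partial_{q,y}\{g_n\}$ for every $n$, the two resulting series coincide, so $\partial_{q,x}\{f\}=\partial_{q,y}\{f\}$.

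With $f$ analytic near $(0,0)$, satisfying $\partial_{q,x}\{f\}=\partial_{q,y}\{f\}$, and having boundary value $f(x,0)=f(x)$, part (i) of Lemma~\ref{liulemope} applies directly and gives $f(x,y)=T(y\mathcal{D}_{q,x})\{f(x,0)\}=T(y\mathcal{D}_{q,x})\{f(x)\}$, which is precisely the asserted interchange $T(y\mathcal{D}_{q,x})\{\sum_n f_n(x)\}=\sum_n T(y\mathcal{D}_{q,x})\{f_n(x)\}$. I expect the only delicate point to be the termwise interchange of the $q$-partial derivative with the infinite sum, but because that derivative is a finite difference rather than a genuine limiting process, the interchange reduces to the convergence of the series at the shifted arguments $(qx,y)$ and $(x,qy)$, which is immediate from the uniform convergence hypothesis together with $0<q<1$; no real analytic obstruction arises.
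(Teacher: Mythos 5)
Your proof is correct and follows essentially the same route as the paper's: apply Lemma~\ref{liulemope}(i) to each term $T(y\mathcal{D}_{q,x})\{f_n(x)\}$, pass the $q$-partial differential equation $\partial_{q,x}=\partial_{q,y}$ to the sum $f(x,y)$, and invoke the lemma once more to conclude $f(x,y)=T(y\mathcal{D}_{q,x})\{f(x,0)\}=T(y\mathcal{D}_{q,x})\{f(x)\}$. The only difference is that you explicitly justify the termwise interchange of the $q$-derivative (as a divided difference) and the boundary evaluation $f(x,0)=f(x)$, steps the paper's proof leaves implicit.
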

\begin{proof} If we use $f_n(x, y)$ to denote $T(yD_{q, x})\{f_n(x)\}, $ then, by Lemma~\ref{liulemope},
we find that $\partial_{q, x}\{f_n(x, y)\}=\partial_{q, y}\{f_n(x, y)\}$. It follows that
$\partial_{q, x}\{f(x, y)\}=\partial_{q, y}\{f(x, y)\}.$ Thus, using Lemma~\ref{liulemope} again,
we have
\[
f(x, y)=T(yD_{q, x})\{f(x, 0)\}=T(yD_{q, x})\{f(x)\}.
\]
\end{proof}
If we replace $T(yD_{q, x})$  by $T(yD_{q^{-1}, x})$ in proposition~\ref{liuopeppf}, we
obtain the following proposition.
\begin{prop} \label{liuopeppg}Let $\{f_n(x)\}$ be a sequence of analytic functions near $x=0$,
such that the series
\[
\sum_{n=0}^\infty f_n(x)
\]
converges uniformly to an analytic function $f(x)$ near $x=0$,  and the series
\[
\sum_{n=0}^\infty T(yD_{q^{-1}, x})\{f_n(x)\}
\]
converges uniformly to an analytic function $f(x, y)$ near $(x, y)=(0, 0)$. Then we have
$f(x, y)=T(yD_{q^{-1}, x})\{f(x)\}$, or
\[
T(yD_{q^{-1}, x})\left\{\sum_{n=0}^\infty f_n(x)\right\}=\sum_{n=0}^\infty T(yD_{q^{-1}, x})\{f_n(x)\}.
\]
\end{prop}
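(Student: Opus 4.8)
The plan is to transcribe the proof of Proposition~\ref{liuopeppf} line for line, exchanging the operator $T(yD_{q, x})$ for its dual $T(yD_{q^{-1}, x})$ and invoking part (ii) of Lemma~\ref{liulemope} wherever part (i) was used. First I would set $f_n(x, y)=T(yD_{q^{-1}, x})\{f_n(x)\}$ for each $n$. Each $f_n(x)$ is analytic near $x=0$, so by the same root-test estimate that establishes Proposition~\ref{liuopeppa} --- now applied to the series in Definition~\ref{opedefn} carrying the extra damping factor $q^{n(n+1)/2}$, which only improves convergence --- each $f_n(x, y)$ is analytic near $(x, y)=(0, 0)$. Since $f_n(x)=f_n(x, 0)$, the function $f_n(x, y)=T(yD_{q^{-1}, x})\{f_n(x, 0)\}$ falls under the forward implication of part (ii) of Lemma~\ref{liulemope}, so $\partial_{q^{-1}, x}\{f_n\}=\partial_{q^{-1}, y}\{f_n\}$ for every $n$.

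Next I would sum these identities over $n$. Because $\partial_{q^{-1}, x}$ and $\partial_{q^{-1}, y}$ are difference operators --- each value $\partial_{q^{-1}, x}\{g\}$ is the fixed finite combination $(g(x, y)-g(x/q, y))/x$, and likewise in $y$ --- the hypothesis that $\sum_{n=0}^\infty f_n(x, y)$ converges uniformly near $(0, 0)$ to the analytic function $f(x, y)$ lets me interchange the summation with each operator. This yields $\partial_{q^{-1}, x}\{f(x, y)\}=\partial_{q^{-1}, y}\{f(x, y)\}$.

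Finally, putting $y=0$ gives $f(x, 0)=\sum_{n=0}^\infty f_n(x, 0)=\sum_{n=0}^\infty f_n(x)=f(x)$, since $T(yD_{q^{-1}, x})\{f_n(x)\}$ collapses to $f_n(x)$ at $y=0$. The function $f(x, y)$ is analytic near $(0, 0)$ by hypothesis and satisfies the $q$-partial differential equation just established, so the converse implication in part (ii) of Lemma~\ref{liulemope} delivers $f(x, y)=T(yD_{q^{-1}, x})\{f(x, 0)\}=T(yD_{q^{-1}, x})\{f(x)\}$, which is the asserted identity. The only step demanding genuine attention is the interchange of the infinite sum with the $q^{-1}$-partial derivatives and, with it, the passage of the $q$-partial differential equation to the limit; this is exactly what the two uniform-convergence hypotheses on $\{f_n\}$ are designed to supply, so no analytic ingredient beyond Lemma~\ref{liulemope} and the (easier) dual of Proposition~\ref{liuopeppa} is needed.
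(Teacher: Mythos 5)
Your argument has exactly the skeleton of the paper's: the paper proves Proposition~\ref{liuopeppf} by applying Lemma~\ref{liulemope} to each $f_n(x,y)=T(y\mathcal{D}_{q,x})\{f_n(x)\}$, summing the resulting $q$-partial differential equations, and then invoking the converse direction of the lemma; Proposition~\ref{liuopeppg} is then obtained in the paper simply by repeating this with $T(y\mathcal{D}_{q^{-1},x})$ and part (ii) of the lemma. Your treatment of the interchange of $\sum_n$ with $\partial_{q^{-1},x}$ and $\partial_{q^{-1},y}$ (using that these are finite difference quotients, so pointwise/uniform convergence suffices) and the observation $f(x,0)=f(x)$ are correct, and they are precisely what the paper compresses into ``it follows that''.

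However, one of your steps rests on a false claim: there is no $q^{-1}$-analogue of Proposition~\ref{liuopeppa}, and the factor $q^{n(n+1)/2}$ does not ``only improve convergence'' --- the situation is the opposite. Unlike $\mathcal{D}_{q,x}$, the operator $\mathcal{D}_{q^{-1},x}$ magnifies Taylor coefficients: $\mathcal{D}^m_{q^{-1},x}\{x^k\}=(1-q^{-k})(1-q^{-(k-1)})\cdots(1-q^{-(k-m+1)})\,x^{k-m}$, and this prefactor grows like $q^{-mk}$ in $k$, which no factor depending on $m$ alone (such as $q^{m(m+1)/2}$) can control. Equivalently, $T(y\mathcal{D}_{q^{-1},x})\{x^k\}=g_k(x,y|q)$ contains a term of size roughly $q^{-k^2/4}\bigl(\min(|x|,|y|)\bigr)^k$, so $\sum_k a_k\,g_k(x,y|q)$ diverges off the coordinate axes unless the $a_k$ decay super-exponentially; for instance, $T(y\mathcal{D}_{q^{-1},x})\{1/(1-x)\}$ is not analytic, indeed not even defined as a convergent series, in any neighbourhood of $(0,0)$. (Note also that $\mathcal{D}^m_{q^{-1},x}\{f\}(x)$ involves the value $f(x/q^m)$, so for all terms of the operator series to exist on a common neighbourhood of the origin, $f$ essentially has to be entire.) This is exactly why the $q^{-1}$ case is stated with the hypotheses it has: the well-definedness and analyticity of each $T(y\mathcal{D}_{q^{-1},x})\{f_n(x)\}$ near $(0,0)$ must be taken as part of the hypotheses --- as the paper implicitly does --- rather than derived from the analyticity of $f_n(x)$ alone. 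Once you read the hypotheses that way and drop the appeal to a dual of Proposition~\ref{liuopeppa}, the remainder of your proof goes through and coincides with the paper's.
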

Next we will use the above proposition to give an extension of $q$-Gauss summation. For this purpose,
we also need Tannery's Theorem (see, for example, \cite{Boas}).
\begin{thm}\label{tanthm} For each nonnegative integer  $n$, let $s(n)=\sum_{k>o} f_k(n)$ is a finite sum
or a convergent series.  If $\lim_{n\to \infty} f_k(n)=f_k$, $|f_k(n)|\le M_k$, and $\sum_{k> 0}M_k<\infty,$
then
\[
\lim_{n\to \infty} s(n)=\sum_{k=0}^\infty f_k.
\]
\end{thm}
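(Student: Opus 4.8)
The plan is to prove Tannery's theorem as a discrete analogue of the dominated convergence theorem (equivalently, as a Weierstrass $M$-test argument for the passage to the limit): I would use the convergent majorant $\sum_k M_k$ to control the tails of the series uniformly in $n$, while exploiting the term-by-term limits $f_k(n)\to f_k$ only on a finite initial segment of indices.

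First I would observe that the hypotheses already force the limit series to converge absolutely. Letting $n\to\infty$ in $|f_k(n)|\le M_k$ gives $|f_k|\le M_k$ for every $k$, so $\sum_{k}|f_k|\le\sum_k M_k<\infty$ and $\sum_{k=0}^\infty f_k$ is a well-defined number. If each $s(n)$ is merely a finite sum, I would set $f_k(n)=0$ for the missing indices; since $M_k\ge 0$ the bound $|f_k(n)|\le M_k$ persists and the hypothesis $\lim_{n\to\infty}f_k(n)=f_k$ is unaffected for each fixed $k$, so the finite-sum case reduces to the infinite-series case without further effort.

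Next, fix $\varepsilon>0$. Because $\sum_k M_k$ converges, I would choose an integer $K$, depending only on $\varepsilon$ and not on $n$, so large that $\sum_{k>K}M_k<\varepsilon/3$. For every $n$ I then estimate
\[
\Bigl|\,s(n)-\sum_{k=0}^\infty f_k\,\Bigr|
\le\sum_{k=0}^{K}|f_k(n)-f_k|+\sum_{k>K}|f_k(n)|+\sum_{k>K}|f_k|,
\]
and bound each of the two tails by $\sum_{k>K}M_k<\varepsilon/3$, using $|f_k(n)|\le M_k$ and $|f_k|\le M_k$ respectively.

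Finally, the head $\sum_{k=0}^{K}|f_k(n)-f_k|$ is a fixed finite sum of terms each tending to $0$ as $n\to\infty$, so there is an $N$ for which this sum is below $\varepsilon/3$ whenever $n\ge N$; combining the three estimates yields $\bigl|s(n)-\sum_k f_k\bigr|<\varepsilon$ for all $n\ge N$, which is the desired conclusion. The only point genuinely requiring care is that the cutoff $K$ must be selected independently of $n$ before the finitely many limits in the head are taken; this uniformity, supplied precisely by the summable majorant $\{M_k\}$, is the crux of the argument and the step where a naive proof would fail.
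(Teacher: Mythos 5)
Your proof is correct and complete. Note, however, that the paper does not actually prove this statement: Tannery's theorem is quoted with a citation to Boas, so there is no internal argument to compare yours against. Your three-epsilon argument --- choosing the cutoff $K$ from the summable majorant $\sum_k M_k$ \emph{before} letting $n\to\infty$, bounding both tails by $\sum_{k>K}M_k$, and then handling the fixed finite head by termwise convergence --- is precisely the standard proof found in the cited literature, and your zero-padding reduction correctly disposes of the case where $s(n)$ is a finite sum whose length may vary with $n$. The one point you rightly flag as the crux (that $K$ is independent of $n$) is indeed where a naive interchange of limit and sum would fail, so the write-up is sound as it stands.
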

\begin{prop}\label{liuopeppgauss} For $\max\{|x|, |abxy|\}<1,$ we have the summation
\begin{align*}
\sum_{n=0}^\infty \frac{(a, b; q)_n x^n}{(q, abxy; q)_n}
{_3\phi_2}\({{q^{-n}, 1/x, 1/y}\atop{a, b}}; q, abxyq^n\)
=\frac{(ax, bx; q)_\infty}{(x, abxy; q)_\infty}.
\end{align*}
\end{prop}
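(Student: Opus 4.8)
The plan is to treat the formula as the $q$-Gauss summation deformed by the extra variable $y$, and to let the operator machinery of this section police the interchanges. First I would record the classical $q$-Gauss sum in the shape
\[
\sum_{n=0}^\infty \frac{(a, b; q)_n\, x^n}{(q, abx; q)_n}=\frac{(ax, bx; q)_\infty}{(x, abx; q)_\infty},
\]
and observe that this is exactly the $y=1$ instance of the asserted identity: putting $y=1$ makes $1/y=1$, so $(1;q)_k=0$ for $k\ge 1$ collapses every inner ${_3\phi_2}$ to its $k=0$ term, while $(abxy;q)_n$ becomes $(abx;q)_n$. This pins down the mechanism that the proof must explain, namely how turning on $y$ replaces the factor $(abx;q)_\infty$ by $(abxy;q)_\infty$ and dresses each summand by the terminating ${_3\phi_2}$.

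Next I would expand the inner ${_3\phi_2}$ and interchange the two summations. Using $(q^{-n};q)_k\, q^{nk}=(-1)^k q^{\binom{k}{2}}(q;q)_n/(q;q)_{n-k}$, the $n$-summation collapses, for each fixed $k$, into a single ${_2\phi_1}$ in $x$ with shifted base $abxyq^k$, giving
\[
\mathrm{LHS}=\sum_{k=0}^\infty \frac{(1/x, 1/y; q)_k\,(-abx^2y)^k q^{\binom{k}{2}}}{(q; q)_k (abxy; q)_k}\,{_2\phi_1}\left({{aq^k, bq^k}\atop{abxyq^k}}; q, x\right).
\]
The analyticity of both sides near the origin (by the ratio test, exactly as in Proposition~\ref{liuopeppa}) legitimises this rearrangement, and Tannery's theorem (Theorem~\ref{tanthm}) supplies the dominated-convergence bound needed to exchange the limit with the summation.

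To evaluate the $k$-series I would bring the inner ${_2\phi_1}$ to a summable shape by a Heine ($q$-Euler) transformation and then apply the $q$-Gauss sum; here Proposition~\ref{liuopeppg} is the device that justifies passing $T(y\mathcal{D}_{q^{-1}, x})$ through the infinite sum, with the monomial action $T(y\mathcal{D}_{q^{-1}, x})\{x^n\}=g_n(x, y|q)$ and the product evaluation of Proposition~\ref{cliupp}(ii) furnishing the closed forms. The generating functions in Theorem~\ref{gefunthm} should then telescope the remaining $k$-sum to the product $\frac{(ax, bx; q)_\infty}{(x, abxy; q)_\infty}$. As a check I would confirm the outcome against the two transparent specialisations $a=0$, where the whole sum degenerates to the $q$-binomial theorem, and $y=1$, where it degenerates to the $q$-Gauss sum recorded above.

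The hard part will be the convergence bookkeeping rather than the formal algebra. The ${_2\phi_1}$ thrown up by the interchange has argument $x$ and is \emph{not} itself $q$-Gauss-summable, so the transformation used to make it summable must be chosen so that the transformed series still converges throughout the stated region $\max\{|x|, |abxy|\}<1$; and the explicit majorants required both for Tannery's theorem and for the uniform-convergence hypotheses of Proposition~\ref{liuopeppg} must be produced by a root/ratio-test estimate of the kind carried out in Proposition~\ref{liuopeppa}. Getting those estimates right, and in particular controlling the reciprocal factor $1/y$ sitting inside the ${_3\phi_2}$, is where the real work lies.
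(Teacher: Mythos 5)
Your opening and closing moves match the paper's proof: the paper likewise begins by estimating the terminating ${}_3\phi_2$ via the ratio test and Tannery's theorem (Theorem~\ref{tanthm}) to show the left-hand side is analytic in $(a,b)$ near the origin, and it likewise uses the $q$-exponential operator interchange machinery of this section. Your double-sum interchange is also correct algebra: collapsing the $n$-sum for fixed $k$ does produce exactly the series you write, with inner factor ${}_2\phi_1\bigl(aq^k, bq^k; abxyq^k; q, x\bigr)$. But the heart of the proof --- actually evaluating that $k$-series --- is missing, and the route you sketch for it does not go through. None of the three Heine transformations of ${}_2\phi_1\bigl(aq^k, bq^k; abxyq^k; q, x\bigr)$ yields a $q$-Gauss-summable series: the transformed arguments come out as $bq^k$, $axy$, and $q^k/y$ respectively, never the quotient (lower parameter)/(product of upper parameters) that $q$-Gauss requires; worse, the Euler-type transformation puts $q^k/y$ in the argument slot, which destroys convergence precisely in the regime of small $|y|$ that the stated region allows. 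Saying the generating functions of Theorem~\ref{gefunthm} ``should then telescope'' the remaining sum is a hope, not an argument --- the identity being proved is essentially a Sears-type ${}_3\phi_2$ transformation in disguise, and that content has to come from somewhere.

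The operator tools you invoke are also the wrong half of the theory, and it is never said what function the operator acts on. Proposition~\ref{liuopeppg}, $T(y\mathcal{D}_{q^{-1},x})$, $g_n$, and Proposition~\ref{cliupp}(ii) belong to the $q^{-1}$-side of the calculus (products $(xs,xt;q)_\infty$ in the numerator); the present identity, with $1/(x,abxy;q)_\infty$ on the right, lives on the $T(b\mathcal{D}_{q,a})$ side. The paper's proof runs as follows: rewrite the $q$-binomial theorem as
\begin{equation*}
\sum_{n=0}^\infty \frac{x^n}{(q; q)_n}\,\frac{(a; q)_\infty}{(aq^n, ax; q)_\infty}
=\frac{1}{(x; q)_\infty},
\end{equation*}
multiply both sides by $1/(ay;q)_\infty$, and apply $T(b\mathcal{D}_{q,a})$. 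Term by term, the deep operator identity of Proposition~\ref{liuopeppb} (whose proof encapsulates the Sears transformation --- this is where the missing content sits) turns the $n$-th summand into exactly $\frac{(a,b;q)_n x^n}{(q,abxy;q)_n}\,{}_3\phi_2\bigl(q^{-n},1/x,1/y;a,b;q,abxyq^n\bigr)$ times the common factor $(abxy;q)_\infty/(ax,ay,bx,by;q)_\infty$; Proposition~\ref{liuopeppf}, fed by the Tannery analyticity you already established, justifies exchanging $T(b\mathcal{D}_{q,a})$ with the sum; and the right-hand side evaluates to $1/(x,ay,by;q)_\infty$ by the basic operator identity. Comparing the two evaluations gives the proposition at once. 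To complete your argument you would either have to adopt this operator step, or supply an explicit series-manipulation proof of the underlying ${}_3\phi_2$ transformation --- neither of which is present in the proposal.
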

When $y=1,$ the above proposition immediately reduces to the $q$-Gauss summation.
\begin{proof}
If we denote the $_3\phi_2$ series in Proposition~\ref{liuopeppgauss} by $f_k(n), $ then,  by a simple computation,
we have
\begin{align*}
&f_k(n)
=(-1)^k {n\brack k}_q \frac{(1/x, 1/y ;q)_k}{(q, a, b; q)_k }q^{k(k-1)/2}(abxy)^k,\\
&\lim_{n\to \infty}f_k(n)=(-1)^k  \frac{(1/x, 1/y ;q)_k}{(q, a, b; q)_k }q^{k(k-1)/2}(abxy)^k,\\
&\quad|f_k(n)| \le \frac{|(1/x, 1/y ;q)_k|}{(q;q)_k |(a, b; q)_k|} |abxy|^k=M_k.
\end{align*}
Using the ratio test, we find, for $|abxy|<1,$ that $\sum_{k \ge 0} M_k<\infty.$
Thus, by Tannery's Theorem, we deduce that for $|abxy|<1,$
\[
\lim_{n\to \infty} {_3\phi_2}\left({{q^{-n}, 1/x, 1/y}\atop{a, b}}; q, abxyq^n \right)
=\sum_{k=0}^\infty (-1)^k  \frac{(1/x, 1/y ;q)_k}{(q, a, b; q)_k }q^{k(k-1)/2}(abxy)^k.
\]
Hence, using the ratio test, we find the series in  Proposition~\ref{liuopeppgauss} converges uniformly
to an analytic function $f(a, b)$ for $\max\{|x|, |abxy|\}<1.$

Rewrite the $q$-binomial theorem in the form
\[
\sum_{n=0}^\infty \frac{x^n}{(q; q)_n} \frac{(a; q)_\infty}{(aq^n, ax; q)_\infty}
=\frac{1}{(x; q)_\infty}.
\]
Multiplying both sides of the above equation by $1/(ay; q)_\infty,$ we deduce that
\begin{equation}
\sum_{n=0}^\infty \frac{x^n}{(q; q)_n} \frac{(a; q)_\infty}{(aq^n, ax, ay; q)_\infty}
=\frac{1}{(x; q)_\infty(ay; q)_\infty}.
\label{qgauss1}
\end{equation}
Using Proposition~\ref{liuopeppb}, we have,  for $\max\{|ax|, |ay|, |a|, |abxy|\}<1,$  that
\begin{align*}
&\sum_{n=0}^\infty \frac{x^n}{(q; q)_n} T(b \mathcal{D}_{q, a})\left\{\frac{(a; q)_\infty}{(aq^n, ax, ay; q)_\infty}\right\}\\
&=\frac{( abxy; q)_\infty}{( ax, ay, bx, by; q)_\infty}
\sum_{n=0}^\infty \frac{(a, b; q)_n x^n}{(q, abxy; q)_n}
{_3\phi_2}\({{q^{-n}, 1/x, 1/y}\atop{a, b}}; q, abxyq^n\).
\end{align*}
We have proved that the right-hand side of the above equation is analytic near $(a, b)=(0, 0).$ Thus, we find
that the right-hand side of the above equation equals
\begin{align*}
T(b \mathcal{D}_{q, a})\left\{\sum_{n=0}^\infty \frac{x^n}{(q; q)_n} \frac{(a; q)_\infty}{(aq^n, ax, ay; q)_\infty}\right\}
&=\frac{1}{(x; q)_\infty} T(b \mathcal{D}_{q, a})\left\{\frac{1}{(ay; q)_\infty}\right\}\\
&=\frac{1}{(x, ay, by; q)_\infty}.
\end{align*}
Combining the above two equations, we complete the proof of Proposition~\ref{liuopeppgauss}.
\end{proof}
In the same way, we can prove the following transformation for $q$-series, which reduces
to the Jackson q-analogue of the Euler transformation when $b=y.$
\begin{prop}\label{qeulerpp} For $\max\{|b|, |x|, |abxy|\}<1,$ we have
\begin{align*}\\
&\frac{(b; q)_\infty}{(x; q)_\infty} {_2\phi_1}\left({{ax, cx}\atop{acxy}}; q, b\right)\\
&=\sum_{n=0}^\infty \frac{(ab, bc; q)_n x^n}{(q, acxy; q)_n}
{_3\phi_2}\left({{q^{-n}, b/x, b/y}\atop{ab, bc}}; q, abxyq^n\right).
\end{align*}
\end{prop}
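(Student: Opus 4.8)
The plan is to imitate the proof of Proposition~\ref{liuopeppgauss}, treating $a$ and $c$ as the two operator variables (with $b,x,y$ held as parameters) and applying $T(c\mathcal{D}_{q,a})$ to a master identity built from the $q$-binomial theorem. The master identity I would use is obtained by writing $(ab;q)_n=(ab;q)_\infty/(abq^n;q)_\infty$ in $\sum_{n\ge0}(ab;q)_nx^n/(q;q)_n=(abx;q)_\infty/(x;q)_\infty$ and dividing by $(ax,ay;q)_\infty$, namely
\[
\sum_{n=0}^\infty \frac{x^n}{(q;q)_n}\frac{(ab;q)_\infty}{(abq^n,ax,ay;q)_\infty}=\frac{(abx;q)_\infty}{(ax,ay,x;q)_\infty}.
\]
Before applying the operator I would, exactly as in Proposition~\ref{liuopeppgauss}, invoke Tannery's Theorem (Theorem~\ref{tanthm}) together with the ratio test to confirm that the right-hand side of Proposition~\ref{qeulerpp} --- whose inner ${_3\phi_2}$ terminates because of the factor $(q^{-n};q)_k$ --- converges to a function analytic in $(a,c)$ near the origin, which is what will justify moving the operator through the outer sum.

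Applying $T(c\mathcal{D}_{q,a})$ to the master identity, Proposition~\ref{liuopeppf} lets me pass the operator through the left-hand sum, and on each summand Proposition~\ref{liuopeppb} with $(v,s,t,u)=(b,bq^n,x,y)$ does the work: this choice makes the lower parameters $av,cv$ equal $ab,bc$ and the upper parameters $v/s,v/t,v/u$ equal $q^{-n},b/x,b/y$, and it delivers the terminating ${_3\phi_2}$ with argument $acxyq^n$. After pulling out the $n$-independent factors the left-hand side becomes
\[
\frac{(acxy;q)_\infty}{(ax,ay,cx,cy;q)_\infty}\sum_{n=0}^\infty \frac{(ab,bc;q)_nx^n}{(q,acxy;q)_n}{_3\phi_2}\left({{q^{-n},b/x,b/y}\atop{ab,bc}};q,acxyq^n\right),
\]
that is, a fixed prefactor times the series in Proposition~\ref{qeulerpp}. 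On the other side, since $(x;q)_\infty$ is free of $a$, applying the operator to the closed form of the master identity collapses to evaluating $T(c\mathcal{D}_{q,a})\{(abx;q)_\infty/(ax,ay;q)_\infty\}$.

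The step I expect to be the main obstacle is this last evaluation, a two-factor companion of Proposition~\ref{liuopeppb}. I would prove
\[
T(c\mathcal{D}_{q,a})\left\{\frac{(abx;q)_\infty}{(ax,ay;q)_\infty}\right\}=\frac{(b,acxy;q)_\infty}{(ax,ay,cx,cy;q)_\infty}{_2\phi_1}\left({{ax,cx}\atop{acxy}};q,b\right)
\]
straight from Lemma~\ref{liulemope}: the left-hand side is analytic near $(0,0)$ and automatically obeys $\partial_{q,a}\{\cdot\}=\partial_{q,c}\{\cdot\}$, so it is enough to check that the proposed right-hand side satisfies the same $q$-partial differential equation and matches at $c=0$, where the ${_2\phi_1}$ sums by the $q$-binomial theorem to $(abx;q)_\infty/(b;q)_\infty$. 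The genuinely delicate computation is verifying the $q$-partial differential equation for that ${_2\phi_1}$, whose parameters $ax,cx$ and $acxy$ all move with $a$ and $c$ while the argument $b$ stays fixed; I expect the identity $(z;q)_\infty=(1-z)(qz;q)_\infty$ together with a term-by-term $q$-differentiation to reduce it to the stated prefactor. Granting this, I would substitute both evaluations back, cancel the common factor $(acxy;q)_\infty/(ax,ay,cx,cy;q)_\infty$, and read off Proposition~\ref{qeulerpp}; the reduction to the Jackson $q$-analogue of Euler's transformation is then the specialization $b=y$, for which $b/y=1$ collapses the inner ${_3\phi_2}$ to its $k=0$ term.
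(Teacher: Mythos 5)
Your overall strategy is exactly what the paper intends: the paper gives no separate proof of Proposition~\ref{qeulerpp}, saying only that it follows ``in the same way'' as Proposition~\ref{liuopeppgauss}, and your plan (a $q$-binomial master identity, Tannery/ratio-test analyticity, termwise application of $T(c\mathcal{D}_{q,a})$ via Proposition~\ref{liuopeppf}, with Proposition~\ref{liuopeppb} at $(v,s,t,u)=(b,bq^n,x,y)$ on each summand) is that argument, carried out correctly. However, there is one discrepancy you pass over silently, and it matters: your computation delivers the series
\[
\sum_{n=0}^\infty \frac{(ab,bc;q)_n x^n}{(q,acxy;q)_n}\,{_3\phi_2}\left({{q^{-n},b/x,b/y}\atop{ab,bc}};q,acxyq^n\right),
\]
with argument $acxyq^n$, while the proposition as printed has $abxyq^n$; these are different series, yet you declare yours to be ``the series in Proposition~\ref{qeulerpp}.'' In fact the printed statement is false: at $b=0$ every ${_3\phi_2}$ with argument $abxyq^n$ collapses to $1$, so the printed right-hand side becomes $\sum_n x^n/(q,acxy;q)_n$, whereas the left-hand side is $1/(x;q)_\infty$; these already disagree in the coefficient of $x$. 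With $acxyq^n$ the check succeeds, because $\sum_k \frac{(q^{-n};q)_k}{(q;q)_k}(acxyq^n)^k=(acxy;q)_n$ restores $\sum_n x^n/(q;q)_n=1/(x;q)_\infty$. So the paper's $abxyq^n$ is a typo (mechanically copied from Proposition~\ref{liuopeppgauss}, where the operator variables really are $a$ and $b$; the condition $|abxy|<1$ should likewise read $|acxy|<1$), your form is the correct one, and your write-up must state explicitly that it proves this corrected statement rather than asserting an identification that is untrue.

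The rest of the proposal is sound, including the boundary value at $c=0$ and the $b=y$ collapse to Euler's transformation. But the step you expect to be the main obstacle---the evaluation of $T(c\mathcal{D}_{q,a})\{(abx;q)_\infty/(ax,ay;q)_\infty\}$---needs no delicate $q$-PDE verification: by the $q$-binomial theorem
\[
\frac{(abx;q)_\infty}{(ax,ay;q)_\infty}=(b;q)_\infty\sum_{m=0}^\infty \frac{b^m}{(q;q)_m}\,\frac{1}{(axq^m,ay;q)_\infty},
\]
and applying the operator termwise (Proposition~\ref{liuopeppf}) with Proposition~\ref{cliupp}(i) on each term gives $(b;q)_\infty\sum_m \frac{b^m}{(q;q)_m}\frac{(acxyq^m;q)_\infty}{(axq^m,ay,cxq^m,cy;q)_\infty}$, which is precisely your claimed $\frac{(b,acxy;q)_\infty}{(ax,ay,cx,cy;q)_\infty}{_2\phi_1}\left({{ax,cx}\atop{acxy}};q,b\right)$. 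The same expansion also shows that each summand is the $q$-Mehler kernel of Theorem~\ref{aqmehler}, whose $q$-partial derivatives are computed in the paper, so your Lemma~\ref{liulemope} route does go through if you prefer it. By contrast, the fully mechanical imitation of Proposition~\ref{liuopeppgauss} (Proposition~\ref{liuopeppb} with $u\to 0$) produces a ${_2\phi_2}$ on the closed-form side and then needs Heine's transformation to reach the stated ${_2\phi_1}$; your direct evaluation avoids importing that classical result, which is a genuine (if small) improvement in keeping the proof inside the paper's toolkit.
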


We end this section with an extension of the Sears $_4\phi_3$ transformation formula.
\begin{prop}\label{liuopepph} We have the $q$-transformation formula
\begin{align*}
&\sum_{j=0}^n \frac{(q^{-n}, be^{i\theta}, be^{-i\theta}, abcduq^{n-1}/v; q)_j q^j}
{(q, ab, bc, bd; q)_j}{_3\phi_2}\left({{bq^{j}/v, cq^{n}/v, u/v}\atop{q/av, q/dv}}; q, q\right)\\
&=\frac{(ac, cd; q)_n }{(ab, bd; q)_n }\left(\frac{b}{c}\right)^n\sum_{j=0}^n \frac{(q^{-n}, ce^{i\theta}, ce^{-i\theta}, abcduq^{n-1}/v; q)_j q^j}
{(q, ac, bc, cd; q)_j}{_3\phi_2}\left({{cq^{j}/v, bq^{n}/v, u/v}\atop{q/av, q/dv}}; q, q\right).
\end{align*}
\end{prop}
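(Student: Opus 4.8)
The plan is to recognize the asserted transformation as a \emph{symmetry} statement and to prove that symmetry by expanding a suitably normalized form of the left-hand side in Rogers--Szeg\H{o} polynomials. Writing $S(b,c)$ for the left-hand side, so that the factors $e^{\pm i\theta}$ are attached to the first argument, the right-hand side is exactly $\frac{(ac,cd;q)_n}{(ab,bd;q)_n}(b/c)^n S(c,b)$, where $S(c,b)$ is obtained from $S(b,c)$ by the interchange $b\leftrightarrow c$ (note that $abcdu$, $bc$, $u/v$, $q/av$ and $q/dv$ are all invariant under this interchange, and the inner entries satisfy $bq^{j}/v\leftrightarrow cq^{j}/v$, $cq^{n}/v\leftrightarrow bq^{n}/v$). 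Hence, setting
$$F(b,c):=\frac{(ab,bd;q)_n}{b^n}\,S(b,c),$$
the proposition is equivalent to the single identity $F(b,c)=F(c,b)$.

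First I would check that $F$ is analytic near $(b,c)=(0,0)$, treating $a,d,u,v,\theta$ as fixed parameters. The outer sum over $j$ is finite, since $(q^{-n};q)_j$ terminates it, and each inner ${}_3\phi_2(\,\cdots;q,q)$ converges and depends analytically on its parameters when they are small, so the only delicate point is the factor $b^{-n}$. For this one shows that $S(b,c)$ vanishes to order $n$ at $b=0$. The mechanism is already visible at $b=0$: there the numerator entry $bq^{j}/v$ of the inner series becomes $0$, so that inner ${}_3\phi_2$ no longer depends on $j$, and the remaining outer sum collapses to $\sum_{j=0}^n\frac{(q^{-n};q)_j}{(q;q)_j}q^j=(q^{1-n};q)_n=0$ for $n\ge 1$. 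The same bookkeeping, carried out on the first $n$ Taylor coefficients in $b$, gives $S(b,c)=O(b^{n})$, so that $b^{-n}S(b,c)$, and therefore $F$, is analytic at the origin.

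The heart of the proof is to verify the $q$-partial differential equation $\partial_{q,b}\{F\}=\partial_{q,c}\{F\}$. I would apply the product rule for $\mathcal{D}_q$ to the three kinds of ingredients of $F$: the prefactor $(ab,bd;q)_n b^{-n}$; the $j$-dependent quotient $(be^{i\theta},be^{-i\theta},abcduq^{n-1}/v;q)_j/(ab,bc,bd;q)_j$, whose $q$-derivatives are handled by the telescoping $(z;q)_\infty=(1-z)(qz;q)_\infty$ used throughout the paper; and, most delicately, the inner ${}_3\phi_2$, whose top entry $bq^{j}/v$ carries genuine $b$-dependence (while on the $c$-side the corresponding entry $cq^{n}/v$ is the $c$-dependent one). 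For this last piece I would write the inner series out and apply $\mathcal{D}_{q,b}$ termwise, recognizing the outcome as a contiguous, parameter-shifted ${}_3\phi_2$, so that the $b$- and $c$-derivatives can be compared term by term. Arranging the cancellation of the $\theta$-dependent factors, the balancing parameter $abcduq^{n-1}/v$, and the shifted entries $bq^{j}/v$ versus $cq^{n}/v$ in the difference $\partial_{q,b}\{F\}-\partial_{q,c}\{F\}$ is the main obstacle and the only genuinely computational step.

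Once the $q$-partial differential equation is in hand, Proposition~\ref{qppmotivation}(i) gives an expansion $F(b,c)=\sum_{m=0}^\infty \alpha_m h_m(b,c|q)$ with coefficients $\alpha_m$ independent of $b$ and $c$. Since every Rogers--Szeg\H{o} polynomial is symmetric, $h_m(b,c|q)=h_m(c,b|q)$, the function $F$ is symmetric in $b$ and $c$; that is, $F(b,c)=F(c,b)$. Unwinding the normalization, this is precisely the identity of Proposition~\ref{liuopepph}. Alternatively, one could avoid the symmetry packaging and follow the template used earlier in the paper, namely expand $F$ in $h_m(b,c|q)$, set $c=0$, and identify the $\alpha_m$ from the resulting single sum via Sears' ${}_4\phi_3$ transformation; but the symmetry route is cleaner because it never requires evaluating the coefficients explicitly.
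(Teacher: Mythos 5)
Your reduction of the statement to the symmetry $F(b,c)=F(c,b)$, with $F(b,c)=(ab,bd;q)_n\,b^{-n}S(b,c)$, is correct, but the argument collapses at the step you yourself call the heart of the proof: the $q$-partial differential equation $\partial_{q,b}\{F\}=\partial_{q,c}\{F\}$ that you propose to verify is false. By Proposition~\ref{qppmotivation}(i) that equation is \emph{equivalent} to expandability of $F$ in the polynomials $h_m(b,c|q)$, and this is strictly stronger than symmetry: every $h_m$ is symmetric, but a symmetric analytic function need not satisfy the equation (already $f(b,c)=bc$ fails, since $\partial_{q,b}\{bc\}=(1-q)c$ while $\partial_{q,c}\{bc\}=(1-q)b$). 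Your $F$ is of this symmetric-but-not-expandable kind. Take $n=0$, where the proposition is trivially true and $F(b,c)={_3\phi_2}\left({{b/v,\, c/v,\, u/v}\atop{q/av,\, q/dv}};q,q\right)$; a term-by-term computation gives
\begin{equation*}
\partial_{q,b}\{F\}-\partial_{q,c}\{F\}
=-\frac{b-c}{v^{2}}\sum_{k\ge 1}\frac{(1-q^{k})(u/v;q)_{k}\,q^{k}}{(q;q)_{k}(q/av;q)_{k}(q/dv;q)_{k}}\,(qb/v;q)_{k-1}(qc/v;q)_{k-1},
\end{equation*}
which is not identically zero: for generic parameters the sum is nonzero at $b=c=0$, so the difference is nonzero for small $b\neq c$. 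The same obstruction persists for every $n$, because the terms of the inner ${_3\phi_2}$ contain products $(bq^{j}/v;q)_{k}(cq^{n}/v;q)_{k}$ whose $b$- and $c$-derivatives differ by contributions proportional to $b-c$ that your prefactor does not cancel. So there is no cancellation to ``arrange''; the identity you defer to a computation is simply untrue, and consequently no expansion $F=\sum_m\alpha_m h_m(b,c|q)$ exists. The same objection defeats your alternative route at the end, which presupposes exactly that expansion.

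For comparison, the paper never works with a $q$-PDE in the pair $(b,c)$. It starts from the terminating form of Sears' ${_3\phi_2}$ transformation, in which the $b\leftrightarrow c$ asymmetry is already encoded, substitutes $(a_1,a_2,b_1,b_2)=(be^{i\theta},be^{-i\theta},bc,ab)$, multiplies both sides by $(au;q)_\infty/(av;q)_\infty$, and then applies the operator $T(d\mathcal{D}_{q^{-1},a})$ in the \emph{third} variable $a$ to each of the finitely many terms, evaluating every term with the operator identity of Proposition~\ref{liuopeppc}. The expansion machinery of Proposition~\ref{qppmotivation} enters only inside the proof of that operator identity, where the relevant pair of variables is $(a,d)$, for which the $q$-PDE genuinely holds. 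If you want to salvage a $(b,c)$-expansion argument, you must first exhibit a normalization that actually satisfies $\partial_{q,b}=\partial_{q,c}$; the computation above shows your $F$ is not it.
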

If we let $u=v$ in the above equation and use that obvious fact that $(1; q)_0=1$ and $(1; q)_k=0$ for $k\ge 1,$
we find that Proposition~\ref{liuopepph} reduces to the Sears $_4\phi_3$ transformation \cite[Theorem~4]{Liu2003}.
\begin{proof} Recall the Sears $_3\phi_2$ transformation formula (see, for example, \cite[p. 123]{Liu2003}), which states
\begin{align*}
&{_3\phi_2}\({{a_1, a_2, a_3}\atop{b_1, b_2}}; q, \frac{b_1b_2}{a_1a_2a_3}\)\\
&=\frac{(b_2/b_3, b_1b_2/a_1a_2; q)_\infty}
{(b_2, b_1b_2/a_1a_2a_3; q)_\infty} {_3\phi_2}\({{b_1/a_1, b_1/a_2, a_3}\atop{b_1, b_1b_2/a_1a_2}}; q, \frac{b_2}{a_3}\).
\end{align*}
Taking $a_3=q^{-n}$ in the above equation, replacing $(a_1, a_2, b_1, b_2, q)$ by
\[
(1/a_1, 1/a_2, 1/b_1, 1/b_2, 1/q)
\]
and performing some calculation, we obtain the transformation formula
\begin{align*}
{_3\phi_2}\({{q^{-n}, a_1, a_2}\atop{b_1, b_2}}; q, q\)
=\frac{(b_1b_2/a_1a_2; q)_n}{(b_2; q)_n} \(\frac{a_1a_2}{b_1}\)^n
{_3\phi_2}\({{q^{-n}, \frac{b_1}{a_1},\frac {b_1}{a_2}}\atop{b_1, \frac{b_1b_2}{a_1a_2}}}; q, q\).
\end{align*}
Replacing $(a_1, a_2, b_1, b_2)$ by $(be^{i\theta}, be^{-i\theta}, bc, ab)$ in the above
equation and making some calculation,  we find that
\begin{align*}
&\sum_{j=0}^n \frac{(q^{-n}, be^{i\theta}, be^{-i\theta}; q )_j q^j}{(q, bc; q)_j}(abq^j, acq^n; q)_\infty\\
&=\(\frac{b}{c}\)^n \sum_{j=0}^n \frac{(q^{-n}, ce^{i\theta}, ce^{-i\theta}; q )_j q^j}{(q, bc; q)_j}(acq^j, abq^n; q)_\infty.
\end{align*}
Multiplying both sides of the above equation by $(au; q)_\infty/(av; q)_\infty,$
and then taking the action of $T(d D_{q^{-1}, a})$ on both sides of the resulting equation, we deduce that
\begin{align*}
&\sum_{j=0}^n \frac{(q^{-n}, be^{i\theta}, be^{-i\theta}; q )_j q^j}{(q, bc; q)_j}T(d D_{q^{-1}, a})\left\{\frac{(abq^j, acq^n, au; q)_\infty}{(av; q)_\infty}\right\}\\
&=\(\frac{b}{c}\)^n \sum_{j=0}^n \frac{(q^{-n}, ce^{i\theta}, ce^{-i\theta}; q )_j q^j}{(q, bc; q)_j}
T(d D_{q^{-1}, a})\left\{\frac{(acq^j, abq^n, au; q)_\infty}{(av; q)_\infty}\right\}.
\end{align*}
Using the $q$-exponential operator in Proposition~\ref{liuopeppc}, we immediately find that
\begin{align*}
T(d D_{q^{-1}, a})\left\{\frac{(abq^j, acq^n, au; q)_\infty}{(av; q)_\infty}\right\}
&=\frac{(abq^j, acq^n, au, bdq^j, cdq^n, du; q)_\infty}{(av, dv, abcduv^{-1}q^{n+j-1}; q)_\infty}\\
&\quad\times  {_3\phi_2}\({{q^{-n}, bq^j/v, cq^n/v, u/v}\atop{q/av, q/dv}}; q, q\),
\end{align*}
\begin{align*}
T(d D_{q^{-1}, a})\left\{\frac{(acq^j, abq^n, au; q)_\infty}{(av; q)_\infty}\right\}
&=\frac{(acq^j, abq^n, au, cdq^j, bdq^n, du; q)_\infty}{(av, dv, abcduv^{-1}q^{n+j-1}; q)_\infty}\\
&\quad\times  {_3\phi_2}\({{q^{-n}, cq^j/v, bq^n/v, u/v}\atop{q/av, q/dv}}; q, q\).
\end{align*}
Combining the above three equations, we complete the proof of Proposition~\ref{liuopepph}.
\end{proof}

\section{Acknowledgments}
I am  grateful to the referee for many very helpful comments.
The  author was supported in part by
the National Science Foundation of China.

\end{document}